\theoremstyle{plain}
\newtheorem{thm}{Theorem}[section]
\newtheorem{con}[thm]{Conjecture}
\newtheorem{corl}[thm]{Corollary}
\theoremstyle{definition}
\theoremstyle{plain}
\newtheorem{thms}{Theorem}[subsection]
\newtheorem{props}[thms]{Proposition}
\newtheorem{lems}[thms]{Lemma}
\theoremstyle{definition}
\newtheorem{defns}[thms]{Definition}
\newtheorem{rems}[thms]{Remark}
\newtheorem{notas}[thms]{Notation}
\numberwithin{equation}{section}
\begin{document}

\title[Determination of the fifth Singer transfer]
{Determination of the fifth Singer algebraic\\ transfer in some degrees}

\author{Nguy\~\ecircumflex n Sum}
\address{Department of Mathematics and Applications, S\`ai G\`on University, 273 An D\uhorn \ohorn ng V\uhorn \ohorn ng, District 5, H\`\ocircumflex\ Ch\'i Minh city, Viet Nam}

\email{nguyensum@sgu.edu.vn}

\footnotetext[1]{2000 {\it Mathematics Subject Classification}. Primary 55S10; 55S05, 55T15.}
\footnotetext[2]{{\it Keywords and phrases:} Steenrod squares, Polynomial algebra, Singer algebraic transfer, modular representation.}

\begin{abstract}
Let $P_k$ be the graded polynomial algebra $\mathbb F_2[x_1,x_2,\ldots ,x_k]$ over the prime field $\mathbb F_2$ with two elements and the degree of each variable $x_i$ being 1, and let $GL_k$ be the general linear group over $\mathbb F_2$ which acts on $P_k$ as the usual manner. The algebra $P_k$ is considered as a module over the mod-2 Steenrod algebra $\mathcal A$.
In 1989, Singer \cite{si1} defined the $k$-th homological algebraic transfer, which is a homomorphism
$$\varphi_k :{\rm Tor}^{\mathcal A}_{k,k+d} (\mathbb F_2,\mathbb F_2) \to  (\mathbb F_2\otimes_{\mathcal A}P_k)_d^{GL_k}$$
from the homological group of the mod-2 Steenrod algebra $\mbox{Tor}^{\mathcal A}_{k,k+d} (\mathbb F_2,\mathbb F_2)$ to the subspace $(\mathbb F_2\otimes_{\mathcal A}P_k)_d^{GL_k}$ of $\mathbb F_2{\otimes}_{\mathcal A}P_k$ consisting of all the $GL_k$-invariant classes of degree $d$. 

In this paper, by using the results of the Peterson hit problem we present the proof of the fact that the Singer algebraic transfer of rank five is an isomorphism in the internal degrees $d= 20$ and $d = 30$. Our result refutes the proof for the case of $d=20$ in Ph\'uc \cite{p24}.
\end{abstract}

\maketitle

\section{Introduction}\label{s1} 
\setcounter{equation}{0}

Let $P_k$ be the polynomial algebra $\mathbb F_2[x_1,x_2,\ldots ,x_k]$ over the field $\mathbb F_2$ with two elements, in $k$ variables $x_1, x_2, \ldots , x_k$, each variable of degree 1. It is well-known that this algebra is the mod-2 cohomology of an elementary abelian 2-group $V_k$ of rank $k$. Hence, $P_k$ is a module over the mod-2 Steenrod algebra, $\mathcal A$.
The action of $\mathcal A$ on $P_k$ can be explicitly determined by the elementary properties of the Steenrod operations $Sq^j$ and subject to the Cartan formula
$Sq^d(gh) = \sum_{j=0}^dSq^j(g)Sq^{d-j}(h),$
for $g,\, h \in P_k$ (see Steenrod and Epstein~\cite{st}).

The \textit{Peterson hit problem} asks for a minimal generating set for $P_k$ regarded as a module over the  mod-2 Steenrod algebra. Equivalently, this problem is to find a vector space basis for $QP_k := \mathbb F_2 \otimes_{\mathcal A} P_k$ in each degree $d$. Such a basis can be represented by a list of monomials of degree $d$.  This problem is completely computed for $k \leqslant 4$, unknown in general. 

Denote by $GL_k$ the general linear group over the field $\mathbb F_2$. This group acts naturally on $P_k$ by matrix substitution. The actions of $\mathcal A$ and $GL_k$ on $P_k$ commute with each other, so there is an action of $GL_k$ on $QP_k$. 

Denote $(P_k)_d$ the vector subspace of $P_k$ consisting of all the homogeneous polynomials of degree $d$ in $P_k$ and by $(QP_k)_d$ the vector subspace of $QP_k$ consisting of all the classes represented by the elements in $(P_k)_d$. 
In \cite{si1}, Singer defined the homological algebraic transfer, which is a homomorphism
$$\varphi_k :\mbox{Tor}^{\mathcal A}_{k,k+d} (\mathbb F_2,\mathbb F_2) \longrightarrow  (QP_k)_d^{GL_k}$$
from the homology of the Steenrod algebra $\mbox{Tor}^{\mathcal A}_{k,k+d}$ to the subspace of $(QP_k)_d$ consisting of all the $GL_k$-invariant classes of degree $d$. It can be a useful tool in describing the homology groups of the Steenrod algebra. By passing to the dual we get the cohomological algebraic transfer
$$\varphi_k^*: \mathbb F_2\otimes_{GL_k}\mathcal P((P_k)^*_d) \longrightarrow \mbox{Ext}_{\mathcal A}^{k, k+d}(\mathbb F_2, \mathbb F_2),$$
where $(P_k)^*$ is the dual of $P_k$ and $\mathcal P((P_k)^*)$ is the primitive subspace consisting of all elements in $(P_k)^*$ that are annihilated by every positive degree Steenrod squares.

The algebraic transfer was studied by  Boardman~\cite{bo}, Bruner, H\`a and H\uhorn ng~\cite{br}, Ch\ohorn n and H\`a~ \cite{cha,cha1,cha2},  H\uhorn ng ~\cite{hu}, H\`a ~\cite{ha}, H\uhorn ng and Qu\`ynh~ \cite{hq}, Nam~ \cite{na2}, Minami ~\cite{mi}, Ph\'uc \cite{p231,p24}, Qu\`ynh~ \cite{qh} the present author \cite{su4} and others.

Singer proved in \cite{si1} that $\varphi_k$ is an isomorphism for $k=1,2$. Boardman also showed in \cite{bo} that $\varphi_3$ is also an isomorphism.  However, $\varphi_k$ is not a monomorphism in infinitely many degrees for any $k \geqslant 4$ (see Singer \cite{si1}, Bruner, H\`a and H\uhorn ng~\cite{br}, H\uhorn ng \cite{hu}.) Singer gave the following conjecture.

\begin{con}[Singer \cite{si1}]\label{sconj} The algebraic transfer $\varphi_k$ is an epimorphism for any $k \geqslant 0$.
\end{con}

This conjecture is true for $k\leqslant 3$. Ph\'uc stated in \cite{p231} that the conjecture is also true for $k = 4$ but the computations are incomplete and the proof is not explicit. The results in \cite{p231} is only a description of the dimension for $QP_4$ in each degree when Singer conjecture is true for $k=4$. It is not a proof for this conjecture. Recently, we have proved in \cite{suw} that this conjecture is not true for $k = 5$ and the internal degree $d = 108$. This result refused a one of Ph\'uc in \cite{p24}.

In this paper, by using some results of the Peterson hit problem we prove that the Singer algebraic transfer of rank 5 is an isomorphism in the internal degrees $d = 20$ and $d = 30$. 

Consider the case $d = 20$, we have the following.

\begin{thm}\label{thm1} 
The dimension of the $\mathbb F_2$-vector spaces $(QP_5)_{20}^{GL_5}$ is one with a basis element presented by the following polynomial: 
\begin{align*}
p &= x_1x_2x_3^{3}x_4^{3}x_5^{12} + x_1x_2x_3^{3}x_4^{5}x_5^{10} + x_1x_2x_3^{3}x_4^{6}x_5^{9} + x_1x_2x_3^{3}x_4^{12}x_5^{3}\\ 
&\quad + x_1x_2x_3^{6}x_4^{3}x_5^{9} + x_1x_2x_3^{6}x_4^{9}x_5^{3} + x_1x_2^{3}x_3x_4^{3}x_5^{12} + x_1x_2^{3}x_3x_4^{5}x_5^{10}\\ 
&\quad + x_1x_2^{3}x_3x_4^{6}x_5^{9} + x_1x_2^{3}x_3x_4^{12}x_5^{3} + x_1x_2^{3}x_3^{3}x_4^{4}x_5^{9} + x_1x_2^{3}x_3^{3}x_4^{5}x_5^{8}\\ 
&\quad + x_1x_2^{3}x_3^{5}x_4^{3}x_5^{8} + x_1x_2^{3}x_3^{5}x_4^{8}x_5^{3} + x_1x_2^{6}x_3x_4^{3}x_5^{9} + x_1x_2^{6}x_3x_4^{9}x_5^{3}\\ 
&\quad + x_1^{3}x_2x_3^{4}x_4^{3}x_5^{9} + x_1^{3}x_2x_3^{4}x_4^{9}x_5^{3} + x_1^{3}x_2x_3^{5}x_4^{3}x_5^{8} + x_1^{3}x_2x_3^{5}x_4^{8}x_5^{3}\\ 
&\quad + x_1^{3}x_2^{4}x_3x_4^{3}x_5^{9} + x_1^{3}x_2^{4}x_3x_4^{9}x_5^{3} + x_1^{3}x_2^{5}x_3x_4^{3}x_5^{8} + x_1^{3}x_2^{5}x_3x_4^{8}x_5^{3}\\ 
&\quad + x_1x_2^{3}x_3^{5}x_4^{5}x_5^{6} + x_1x_2^{3}x_3^{5}x_4^{6}x_5^{5} + x_1x_2^{3}x_3^{6}x_4^{5}x_5^{5} + x_1x_2^{6}x_3^{3}x_4^{5}x_5^{5}\\ 
&\quad + x_1^{3}x_2x_3^{5}x_4^{5}x_5^{6} + x_1^{3}x_2x_3^{5}x_4^{6}x_5^{5} + x_1^{3}x_2^{5}x_3x_4^{5}x_5^{6} + x_1^{3}x_2^{5}x_3x_4^{6}x_5^{5}\\ 
&\quad + x_1^{3}x_2^{3}x_3^{4}x_4^{5}x_5^{5} + x_1^{3}x_2^{3}x_3^{5}x_4^{4}x_5^{5} + x_1^{3}x_2^{3}x_3^{5}x_4^{5}x_5^{4} + x_1^{3}x_2^{4}x_3^{3}x_4^{5}x_5^{5}\\ 
&\quad + x_1^{3}x_2^{5}x_3^{3}x_4^{4}x_5^{5} + x_1^{3}x_2^{5}x_3^{3}x_4^{5}x_5^{4} + x_1^{3}x_2^{5}x_3^{5}x_4^{3}x_5^{4} + x_1^{3}x_2^{5}x_3^{5}x_4^{4}x_5^{3}. 
\end{align*}
\end{thm}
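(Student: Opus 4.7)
The plan is to reduce the computation of $(QP_5)_{20}^{GL_5}$ to a finite problem in linear algebra over $\mathbb F_2$. Since $GL_5$ is generated by the permutation matrices (giving $\Sigma_5$) together with the transvection $\rho_5$ defined by $\rho_5(x_1)=x_1+x_2$ and $\rho_5(x_j)=x_j$ for $2\leqslant j\leqslant 5$, a class in $(QP_5)_{20}$ is $GL_5$-invariant if and only if it is $\Sigma_5$-invariant and fixed by $\rho_5$. I therefore proceed in two stages.

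In the first stage, I take as input an admissible monomial basis $\mathcal B$ of $(QP_5)_{20}$ supplied by the known solution of the Peterson hit problem for $k=5$ in this degree. Grouping $\mathcal B$ into $\Sigma_5$-orbits and writing a generic $\Sigma_5$-invariant class as an unknown $\mathbb F_2$-linear combination of the orbit sums, I impose invariance under the simple transpositions by applying each of them to a chosen representative polynomial, reducing the resulting expression back to the admissible basis via the hit relations, and collecting the coefficients. The kernel of the ensuing matrix over $\mathbb F_2$ is $(QP_5)_{20}^{\Sigma_5}$, whose dimension already bounds $\dim(QP_5)_{20}^{GL_5}$ from above.

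In the second stage, I apply $\rho_5$ to a representative of a generic element of $(QP_5)_{20}^{\Sigma_5}$, rewrite the result in the admissible basis modulo hits, and impose equality with the original class. This gives a further linear system over $\mathbb F_2$ whose solution space is exactly $(QP_5)_{20}^{GL_5}$. To finish, I verify that the polynomial $p$ displayed in the theorem lies in the admissible span with non-zero coefficient vector (so $[p]\neq 0$), that $p$ is $\Sigma_5$-symmetric modulo hits, and that $\rho_5(p)\equiv p$ modulo hits, and I check that the combined system admits a one-dimensional solution space; together these imply $(QP_5)_{20}^{GL_5}=\mathbb F_2\cdot[p]$.

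The principal obstacle is purely computational: the admissible basis $\mathcal B$ in degree $20$ for five variables is large, and each image $\rho_5(m)$ or $(i,i{+}1)(m)$ of a basis monomial $m$ expands into many non-admissible monomials that must be systematically reduced using the Cartan formula and previously tabulated hit equations. The organization of these reductions, and the careful bookkeeping of the two linear systems, rather than any conceptual difficulty, is where the real work lies; once the reductions are done, the explicit form of $p$ in the statement is forced up to scalar.
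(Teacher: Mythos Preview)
Your two-stage strategy (first compute $\Sigma_5$-invariants, then impose invariance under the transvection $\rho_5$) matches the paper's, so the proposal is sound in outline. The paper, however, organizes the computation through the weight-vector decomposition
\[
(QP_5)_{20}\;\cong\;QP_5((4)|(2)|(1)|^2)\ \oplus\ QP_5((4)|(2)|(3))\ \oplus\ QP_5((4)|^2|(2)),
\]
treating each $QP_5(\omega)$ as a $GL_5$-module in its own right and computing its $\Sigma_5$- and $GL_5$-invariants separately (Propositions~\ref{mdt1}, \ref{mdt2}, \ref{mdt3} and Lemmas~\ref{bdt1}, \ref{bdt2}). This reduces a $641$-dimensional problem to three much smaller ones ($450$, $70$, $121$), each of which is further split into explicit cyclic $\Sigma_5$-submodules. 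The final assembly (Section~2.5) is then a short filtration argument: the $GL_5$-invariant $[h_{10}]_{(4,2,3)}$ in the middle weight piece does \emph{not} lift to $(QP_5)_{20}^{\Sigma_5}$ on its own but requires a correction term $h_0$ from the lower weight piece, and one recovers $p=h_{10}+h_0$. Your monolithic approach on the full space is correct in principle but forfeits this structure, which is precisely what makes the paper's computation feasible by hand. One further point: your phrase ``grouping $\mathcal B$ into $\Sigma_5$-orbits'' is misleading, since $\Sigma_5$ does \emph{not} permute the admissible basis (for instance $\rho_1(a_7)=x_1^2x_2x_3x_4x_5^{15}$ is inadmissible and must be rewritten modulo hits, as the proof of Lemma~\ref{bdt1} shows). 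You acknowledge this with ``reducing \ldots\ via the hit relations,'' but then the orbit-sum ansatz is not available and you must solve the full linear system from the start.
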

In \cite{p24}, Ph\'uc also stated that $\dim(QP_5)_{20}^{GL_5} = 1$ but he cannot provide any basis element. Furthermore, the proof of this result in \cite{p24} is false.

From Lin \cite{wl} and Chen \cite{che}, we have $\mbox{Ext}_{\mathcal A}^{5, 25}(\mathbb F_2, \mathbb F_2) = \langle h_0g_1 = h_2e_0 \rangle$, where where $e_{0} \in \mbox{Ext}_{\mathcal A}^{4,21}(\mathbb F_2, \mathbb F_2)$, $g_1 \in \mbox{Ext}_{\mathcal A}^{4,24}(\mathbb F_2, \mathbb F_2)$ and $h_{i}$ is the Adams element in $\mbox{Ext}_{\mathcal A}^{1,2^{i}}(\mathbb F_2, \mathbb F_2)$ for $i\geqslant 0$. Combining the results of  H\`a \cite{ha} and Singer \cite{si1}, we have $\varphi_5((h_{2}e_{0})^*) = [p]$.  Hence, Theorem \ref{thm1} implies that 
$$\varphi_5: \mbox{Tor}_{5,25}^{\mathcal A}(\mathbb F_2,\mathbb F_2) \longrightarrow (QP_5)_{20}^{GL_5}$$
is an isomorphism. So, we get the following.

\begin{corl}\label{pthm1} Singer's conjecture is true for $k=5$ and the internal degree $20$.
\end{corl}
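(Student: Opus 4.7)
The strategy is to compute $(QP_5)_{20}^{GL_5}$ in three stages: (i) fix an explicit admissible monomial $\mathbb F_2$-basis of $(QP_5)_{20}$ coming from the Peterson hit problem; (ii) extract the $\Sigma_5$-invariant subspace by grouping basis elements into $\Sigma_5$-orbit sums; and (iii) cut the $\Sigma_5$-invariants down to $GL_5$-invariants by imposing invariance under the single transvection $\rho\colon x_1\mapsto x_1+x_2$, $x_j\mapsto x_j$ for $j\geq 2$, using that $GL_5=\langle\Sigma_5,\rho\rangle$.

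As a first reduction, I would split $(QP_5)_{20}=(QP_5^0)_{20}\oplus (QP_5^+)_{20}$, where $(QP_5^+)_{20}$ is spanned by classes of monomials in which every variable occurs with positive exponent, and the complementary summand $(QP_5^0)_{20}$ by those in which at least one variable is absent. Because the hit problem is completely known for $k\le 4$, the $GL_5$-invariants inside $(QP_5^0)_{20}$ are controlled by the already computed $GL_j$-invariants of $(QP_j)_{20}$ for $j\le 4$, and a standard argument shows they contribute nothing new. The main work is thus to determine $(QP_5^+)_{20}^{GL_5}$.

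For the plus part I would list all admissible monomials of degree $20$ involving all five variables, organised by weight vector in the usual style, and collect them into $\Sigma_5$-orbits. A generic $\Sigma_5$-invariant is then an $\mathbb F_2$-linear combination of these orbit sums. Applying $\rho$ to each orbit sum, expanding via the binomial formula, and reducing the result back into the chosen admissible basis using the hit relations in degree $20$ produces, for each orbit, an explicit vector in that basis. Demanding that the coefficient vector be fixed by $\rho$ yields a homogeneous linear system whose solution space is exactly $(QP_5^+)_{20}^{GL_5}$. One then verifies that this space is one-dimensional, exhibits the unique (up to scalar) solution, and checks that the polynomial $p$ in the statement represents it, for example by computing $\rho(p)\equiv p$ in $(QP_5)_{20}$ directly.

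The main obstacle is the reduction step. Rewriting $\rho$(orbit sum) in the admissible basis requires repeated use of the Cartan formula together with the admissibility criteria (Kameko, Singer, and the spike/weight-vector criteria used in the hit-problem literature) to recognise hit polynomials and to expand non-admissible monomials into admissible ones. In degree $20$ the list of relevant admissible monomials is long, so the bookkeeping is delicate; it is exactly at this step that the argument in \cite{p24} for $d=20$ breaks down, and it is therefore where our computation must be carried out monomial by monomial with particular care.
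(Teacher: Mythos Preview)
Your proposal addresses only half of what the corollary needs. Computing $(QP_5)_{20}^{GL_5}$ and finding it one-dimensional is Theorem~\ref{thm1}; the corollary additionally requires knowing that $\mathrm{Tor}^{\mathcal A}_{5,25}(\mathbb F_2,\mathbb F_2)$ is one-dimensional (by Lin~\cite{wl} and Chen~\cite{che} one has $\mathrm{Ext}_{\mathcal A}^{5,25}(\mathbb F_2,\mathbb F_2)=\langle h_2e_0\rangle$) and that $\varphi_5$ carries the dual of this generator to $[p]\ne 0$ (this follows from H\`a~\cite{ha} together with Singer~\cite{si1}). You never invoke these facts, so even granting your invariant computation the conclusion that $\varphi_5$ is an epimorphism in this degree does not follow.

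Within the invariant computation itself there is also a structural gap. The splitting $(QP_5)_{20}=(QP_5^0)_{20}\oplus(QP_5^+)_{20}$ is $\Sigma_5$-equivariant but \emph{not} $GL_5$-equivariant: the transvection $\rho$ mixes the two summands (for instance a monomial missing only $x_2$ is sent to a sum containing monomials with all five variables, and conversely), so there is no well-defined subspace ``$(QP_5^0)_{20}^{GL_5}$'' to dismiss, and no standard argument reducing it to $GL_j$-invariants for $j\le 4$. The paper sidesteps this by using instead the weight-vector filtration, whose graded pieces $QP_5(\omega)$ \emph{are} $GL_5$-modules; it computes the $GL_5$-invariants of each of the three relevant weights separately (Propositions~\ref{mdt1}, \ref{mdt2}, \ref{mdt3}) and then lifts through the filtration. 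Concretely, several of the $\Sigma_5$-invariants that must be ruled out---for example $h_1,h_2,h_3,h_4$ in weight $(4,2,1,1)$ and $h_8$, $h_{11}$ in the higher weights---are supported entirely on $QP_5^0$, and eliminating them requires applying $\rho$ and tracking the interaction with $QP_5^+$ terms. Your approach can be repaired by computing the $\Sigma_5$-invariants of \emph{both} summands and only then imposing $\rho$-invariance on the whole space, but the $QP_5^0$ contribution cannot be discarded a priori.
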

This result is also stated in \cite[Corollary 1.7]{p24}, but it is derived from a result with a false proof.

Consider the case $d = 30$. By using the explicit results for a basis of $(QP_5)_{30}$ in our work \cite{smo}, we prove the following. 
\begin{thm}\label{thm2} 
The dimension of the $\mathbb F_2$-vector spaces $(QP_5)_{30}^{GL_5}$ is one with a basis element presented by the following polynomial which is explicitly determined as follows: 
\begin{align*}
q &= x_4^{15}x_5^{15} + x_3^{15}x_5^{15} + x_3^{15}x_4^{15} + x_2^{15}x_5^{15} + x_2^{15}x_4^{15} + x_2^{15}x_3^{15} + x_1^{15}x_5^{15} + x_1^{15}x_4^{15}\\ 
&\quad + x_1^{15}x_3^{15} + x_1^{15}x_2^{15} + x_3x_4^{14}x_5^{15} + x_3x_4^{15}x_5^{14} + x_3^{15}x_4x_5^{14} + x_2x_4^{14}x_5^{15}\\ 
&\quad + x_2x_4^{15}x_5^{14} + x_2x_3^{14}x_5^{15} + x_2x_3^{14}x_4^{15} + x_2x_3^{15}x_5^{14} + x_2x_3^{15}x_4^{14} + x_2^{15}x_4x_5^{14}\\ 
&\quad + x_2^{15}x_3x_5^{14} + x_2^{15}x_3x_4^{14} + x_1x_4^{14}x_5^{15} + x_1x_4^{15}x_5^{14} + x_1x_3^{14}x_5^{15} + x_1x_3^{14}x_4^{15}\\ 
&\quad + x_1x_3^{15}x_5^{14} + x_1x_3^{15}x_4^{14} + x_1x_2^{14}x_5^{15} + x_1x_2^{14}x_4^{15} + x_1x_2^{14}x_3^{15} + x_1x_2^{15}x_5^{14}\\ 
&\quad + x_1x_2^{15}x_4^{14} + x_1x_2^{15}x_3^{14} + x_1^{15}x_4x_5^{14} + x_1^{15}x_3x_5^{14} + x_1^{15}x_3x_4^{14} + x_1^{15}x_2x_5^{14}\\ 
&\quad + x_1^{15}x_2x_4^{14} + x_1^{15}x_2x_3^{14} + x_3^{3}x_4^{13}x_5^{14} + x_2^{3}x_4^{13}x_5^{14} + x_2^{3}x_3^{13}x_5^{14} + x_2^{3}x_3^{13}x_4^{14}\\ 
&\quad + x_1^{3}x_4^{13}x_5^{14} + x_1^{3}x_3^{13}x_5^{14} + x_1^{3}x_3^{13}x_4^{14} + x_1^{3}x_2^{13}x_5^{14} + x_1^{3}x_2^{13}x_4^{14} + x_1^{3}x_2^{13}x_3^{14}\\ 
&\quad + x_2x_3x_4^{14}x_5^{14} + x_2x_3^{14}x_4x_5^{14} + x_2^{3}x_3^{5}x_4^{10}x_5^{12} + x_1x_3x_4^{14}x_5^{14} + x_1x_3^{14}x_4x_5^{14}\\ 
&\quad + x_1x_2x_4^{14}x_5^{14} + x_1x_2x_3^{14}x_5^{14} + x_1x_2x_3^{14}x_4^{14} + x_1x_2^{14}x_4x_5^{14} + x_1x_2^{14}x_3x_5^{14}\\ 
&\quad + x_1x_2^{14}x_3x_4^{14} + x_1^{3}x_3^{5}x_4^{10}x_5^{12} + x_1^{3}x_2^{5}x_4^{10}x_5^{12} + x_1^{3}x_2^{5}x_3^{10}x_5^{12} + x_1^{3}x_2^{5}x_3^{10}x_4^{12}\\ 
&\quad + x_2x_3^{2}x_4^{13}x_5^{14} + x_2x_3^{3}x_4^{12}x_5^{14} + x_2x_3^{3}x_4^{14}x_5^{12} + x_2^{3}x_3x_4^{12}x_5^{14} + x_2^{3}x_3x_4^{14}x_5^{12}\\ 
&\quad + x_2^{3}x_3^{13}x_4^{2}x_5^{12} + x_1x_3^{2}x_4^{13}x_5^{14} + x_1x_3^{3}x_4^{12}x_5^{14} + x_1x_3^{3}x_4^{14}x_5^{12} + x_1x_2^{2}x_4^{13}x_5^{14}\\ 
&\quad + x_1x_2^{2}x_3^{13}x_5^{14} + x_1x_2^{2}x_3^{13}x_4^{14} + x_1x_2^{3}x_4^{12}x_5^{14} + x_1x_2^{3}x_4^{14}x_5^{12} + x_1x_2^{3}x_3^{12}x_5^{14}\\ 
&\quad + x_1x_2^{3}x_3^{12}x_4^{14} + x_1x_2^{3}x_3^{14}x_5^{12} + x_1x_2^{3}x_3^{14}x_4^{12} + x_1^{3}x_3x_4^{12}x_5^{14} + x_1^{3}x_3x_4^{14}x_5^{12}\\ 
&\quad + x_1^{3}x_3^{13}x_4^{2}x_5^{12} + x_1^{3}x_2x_4^{12}x_5^{14} + x_1^{3}x_2x_4^{14}x_5^{12} + x_1^{3}x_2x_3^{12}x_5^{14} + x_1^{3}x_2x_3^{12}x_4^{14}\\ 
&\quad + x_1^{3}x_2x_3^{14}x_5^{12} + x_1^{3}x_2x_3^{14}x_4^{12} + x_1^{3}x_2^{13}x_4^{2}x_5^{12} + x_1^{3}x_2^{13}x_3^{2}x_5^{12} + x_1^{3}x_2^{13}x_3^{2}x_4^{12}\\ 
&\quad + x_2x_3^{2}x_4^{12}x_5^{15} + x_2x_3^{2}x_4^{15}x_5^{12} + x_2x_3^{15}x_4^{2}x_5^{12} + x_2^{15}x_3x_4^{2}x_5^{12} + x_1x_3^{2}x_4^{12}x_5^{15}\\ 
&\quad + x_1x_3^{2}x_4^{15}x_5^{12} + x_1x_3^{15}x_4^{2}x_5^{12} + x_1x_2^{2}x_4^{12}x_5^{15} + x_1x_2^{2}x_4^{15}x_5^{12} + x_1x_2^{2}x_3^{12}x_5^{15}\\ 
&\quad + x_1x_2^{2}x_3^{12}x_4^{15} + x_1x_2^{2}x_3^{15}x_5^{12} + x_1x_2^{2}x_3^{15}x_4^{12} + x_1x_2^{15}x_4^{2}x_5^{12} + x_1x_2^{15}x_3^{2}x_5^{12}\\ 
&\quad + x_1x_2^{15}x_3^{2}x_4^{12} + x_1^{15}x_3x_4^{2}x_5^{12} + x_1^{15}x_2x_4^{2}x_5^{12} + x_1^{15}x_2x_3^{2}x_5^{12} + x_1^{15}x_2x_3^{2}x_4^{12}\\ 
&\quad + x_1x_2x_3^{2}x_4^{14}x_5^{12} + x_1x_2x_3^{6}x_4^{10}x_5^{12} + x_1x_2x_3^{14}x_4^{2}x_5^{12} + x_1x_2^{2}x_3^{3}x_4^{12}x_5^{12}\\ 
&\quad + x_1x_2^{2}x_3^{4}x_4^{8}x_5^{15} + x_1x_2^{2}x_3^{4}x_4^{9}x_5^{14} + x_1x_2^{2}x_3^{4}x_4^{15}x_5^{8} + x_1x_2^{2}x_3^{5}x_4^{8}x_5^{14}\\ 
&\quad + x_1x_2^{2}x_3^{5}x_4^{10}x_5^{12} + x_1x_2^{2}x_3^{5}x_4^{14}x_5^{8} + x_1x_2^{2}x_3^{12}x_4x_5^{14} + x_1x_2^{2}x_3^{13}x_4^{2}x_5^{12}\\ 
&\quad + x_1x_2^{2}x_3^{15}x_4^{4}x_5^{8} + x_1x_2^{3}x_3^{4}x_4^{8}x_5^{14} + x_1x_2^{3}x_3^{4}x_4^{14}x_5^{8} + x_1x_2^{3}x_3^{6}x_4^{12}x_5^{8}\\ 
&\quad + x_1x_2^{3}x_3^{12}x_4^{2}x_5^{12} + x_1x_2^{3}x_3^{14}x_4^{4}x_5^{8} + x_1x_2^{14}x_3x_4^{2}x_5^{12} + x_1x_2^{15}x_3^{2}x_4^{4}x_5^{8}\\ 
&\quad + x_1^{3}x_2x_3^{4}x_4^{8}x_5^{14} + x_1^{3}x_2x_3^{4}x_4^{14}x_5^{8} + x_1^{3}x_2x_3^{6}x_4^{12}x_5^{8} + x_1^{3}x_2x_3^{12}x_4^{2}x_5^{12}\\ 
&\quad + x_1^{3}x_2x_3^{14}x_4^{4}x_5^{8} + x_1^{3}x_2^{5}x_3^{2}x_4^{12}x_5^{8} + x_1^{3}x_2^{5}x_3^{10}x_4^{4}x_5^{8} + x_1^{3}x_2^{13}x_3^{2}x_4^{4}x_5^{8}\\ 
&\quad + x_1^{15}x_2x_3^{2}x_4^{4}x_5^{8} + x_1^{3}x_2^{5}x_3^{6}x_4^{6}x_5^{10}.
\end{align*}
\end{thm}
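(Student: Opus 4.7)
The plan is to exploit the explicit admissible monomial basis of $(QP_5)_{30}$ established in our earlier work \cite{smo} and to recast the computation of $(QP_5)_{30}^{GL_5}$ as a linear algebra problem over $\mathbb{F}_2$. Since $GL_5(\mathbb{F}_2)$ is generated by the symmetric group $\Sigma_5$ together with the transvection $\rho_5 : x_1 \mapsto x_1+x_2$, $x_i \mapsto x_i$ for $i\geqslant 2$, I would proceed in two stages: first compute the $\Sigma_5$-invariant subspace $(QP_5)_{30}^{\Sigma_5}$, then impose $\rho_5$-invariance on the output.

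In the first stage I would group the admissible basis $\mathcal{B}$ of $(QP_5)_{30}$ into $\Sigma_5$-orbits indexed by weight vectors (sorted exponent tuples summing to $30$). Writing a candidate invariant as $[f] = \sum_{b\in\mathcal{B}} c_b [b]$ with $c_b$ constrained to be constant on each orbit, I would then apply each adjacent transposition $(i,i+1)$ and rewrite the resulting polynomial back in the admissible basis using the explicit hit relations of \cite{smo}. The condition $\sigma(f)-f \in \mathcal{A}P_5$ for every $\sigma\in\Sigma_5$ yields a linear system over $\mathbb{F}_2$ whose solution space is $(QP_5)_{30}^{\Sigma_5}$.

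In the second stage, $\rho_5$ sends $x_1^{a_1}x_2^{a_2}\cdots x_5^{a_5}$ to $\sum_{j=0}^{a_1}\binom{a_1}{j} x_1^{a_1-j}x_2^{a_2+j}x_3^{a_3}x_4^{a_4}x_5^{a_5}$; after reducing each summand to the admissible basis, the requirement $\rho_5(f)-f \in \mathcal{A}P_5$ cuts $(QP_5)_{30}^{\Sigma_5}$ down, and I expect the surviving subspace to be one-dimensional. To finish, I would verify directly that the polynomial $q$ satisfies both invariance conditions $\sigma(q)-q\in\mathcal{A}P_5$ and $\rho_5(q)-q\in\mathcal{A}P_5$, so that $[q]$ generates $(QP_5)_{30}^{GL_5}$.

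The main obstacle is the scale of the calculation: the basis $\mathcal{B}$ is large, and every transposition or $\rho_5$-application produces numerous monomials that must be rewritten via a substantial list of explicit hit relations. To keep this tractable I would rely on the spike-type decomposition and the partition of admissible monomials by weight vectors from \cite{smo}, which splits $\mathcal{B}$ into manageable blocks on which $\Sigma_5$ acts by smaller permutation representations. The special form of the degree, $30 = 2(2^4-1)$, is favorable here because the ten spike monomials $x_i^{15}x_j^{15}$ with $i<j$ form a single $\Sigma_5$-orbit that is forced to appear with a common coefficient in any symmetric invariant; this anchors the invariant and makes the remaining constraints substantially more rigid.
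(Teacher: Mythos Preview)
Your overall two-stage plan (compute $\Sigma_5$-invariants first, then impose $\rho_5$) coincides with the paper's, but there is a genuine error in how you propose to compute the $\Sigma_5$-invariants, and you are not using the main structural tool the paper relies on.

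The error is the clause ``$c_b$ constrained to be constant on each orbit.'' The admissible monomial basis of $(QP_5)_{30}$ is \emph{not} stable under the $\Sigma_5$-action on $P_5$: a permutation of an admissible monomial is in general inadmissible and, after applying the hit relations, rewrites as a combination of admissibles with \emph{different} exponent multisets. Consequently $\Sigma_5$-invariant classes in $QP_5$ are not orbit sums of admissible monomials. Concretely, among the nine generators $p_1,\ldots,p_9$ of $QP_5((2)|^4)^{\Sigma_5}$ listed in Subsection~\ref{s53}, $p_8$ is a sum of seventeen monomials drawn from several distinct exponent multisets, none appearing as a full $\Sigma_5$-orbit; and $q$ itself is not a symmetric polynomial. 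If you impose the constant-on-orbit ansatz before solving, you discard the actual invariant. (There is also a terminology slip: in this paper ``weight vector'' means $\omega(x)$ in the sense of Kameko, not the sorted exponent tuple.)

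The structural tool you are missing is precisely this $\omega$-filtration. The paper uses the decomposition $(QP_5)_{30}\cong QP_5((2)|^4)\oplus QP_5(2,4,3,1)\oplus QP_5(4,3,3,1)$, each piece being a $GL_5$-module, and shows separately that the two outer pieces have no $GL_5$-invariants while $QP_5(2,4,3,1)$ is one-dimensional (Propositions~\ref{mdt31}, \ref{mdt32}, \ref{mdt33}). This already gives $\dim(QP_5)_{30}^{GL_5}\leqslant 1$. The remaining work is to lift through the filtration: for $[g]\in(QP_5)_{30}^{GL_5}$ one obtains $g\equiv\gamma\,x_1^3x_2^5x_3^6x_4^6x_5^{10}+(\text{weight }(2)^4\text{ terms})$; imposing $\Sigma_5$-invariance of $g$ in $QP_5$ (not merely in the graded piece) forces a specific correction $p_0$ of weight $(2)^4$ together with an arbitrary combination $\sum_{s=1}^{9}\gamma_s p_s$; and finally $\rho_5$-invariance forces $\gamma_s=\gamma$ for all $s$, giving $g\equiv\gamma q$. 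Working on these small graded pieces, rather than directly on the $840$-dimensional space, is what makes the computation tractable and is what explains \emph{why} the answer is exactly one-dimensional.
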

By using a computer program with an algorithm of the mathematics system SAGEMATH, T\'in proved in \cite[Page 1923]{Tur} that $\dim(QP_5)_{30} = 840$. This dimensional result is presented again in Ph\'uc \cite[Appendix]{pp21} but there are no any details of the computations for a basis of this space. So, it is not a solution of the hit problem because dimensions are not the object of study of the hit problem. When the basis cannot be found, the dimensional results are meaningless. In \cite{smo} we present a solution of the hit problem by explicitly computing a basis of $(QP_5)_{30}$. This result confirms the accuracy of dimensional result for $(QP_5)_{30}$ in T\'in work \cite{Tur}. 

Ph\'uc \cite{pp21} also stated that by using this dimensional result he proved that $\dim(QP_5)_{30}^{GL_5}=1$ but this result must be proven by using a basis, it cannot be proven by using a dimensional result. So, his statement is only a prediction. 

From Lin \cite{wl} and Chen \cite{che}, we have $\mbox{Ext}_{\mathcal A}^{5,35}(\mathbb F_2, \mathbb F_2) = \langle h_0^3h_4^2  \rangle$. Hence, using a result of Singer \cite{si1}, we have $\varphi_5((h_0^3h_4^2)^*) = [q]$. Hence, Theorem \ref{thm2} implies that 
$$\varphi_5: \mbox{Tor}_{5,35}^{\mathcal A}(\mathbb F_2,\mathbb F_2) \longrightarrow (QP_5)_{30}^{GL_5}$$
is an isomorphism. So, we get the following.

\begin{corl} Singer's conjecture is true for $k=5$ and the internal degree $d=30$.
\end{corl}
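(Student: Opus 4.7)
The plan is to leverage the explicit admissible monomial basis of $(QP_5)_{30}$ constructed in \cite{smo}, together with the generation of $GL_5$ by $\Sigma_5$ and a single transvection, to cut the problem down to a manageable linear algebra calculation.

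\textbf{Step 1 (organize the basis by weight vectors).} Since each Singer weight piece $(QP_5)^{\omega}_{30}$ is closed under both $\mathcal{A}$ and $GL_5$, I would first split the $840$-dimensional space from \cite{smo} as $(QP_5)_{30}=\bigoplus_{\omega}(QP_5)^{\omega}_{30}$ and treat the invariants one $\omega$-block at a time. Blocks corresponding to spike monomials of degree $30$ (such as $x_i^{15}x_j^{15}$, $x_i^{15}x_j^{7}x_k^{7}x_\ell$, etc.) are hit-free and contribute cleanly; the non-spike blocks carry the bulk of the relations.

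\textbf{Step 2 ($\Sigma_5$-invariance).} I would partition the admissible basis into $\Sigma_5$-orbits. For each orbit, the invariance condition equates the coefficients of basis elements lying in it; when $\sigma(m)$ is not itself admissible, reducing $\sigma(m)$ modulo $\mathcal{A}^+P_5$ via the explicit reduction rules used in \cite{smo} produces extra linear relations. Collecting these relations across all orbits pins down $(QP_5)_{30}^{\Sigma_5}$ as the solution set of a linear system inside each $\omega$-block.

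\textbf{Step 3 (transvection invariance).} Because $GL_5$ is generated by $\Sigma_5$ together with the transvection $\rho\colon x_1\mapsto x_1+x_2,\ x_i\mapsto x_i\ (i\geq 2)$, invariance under $GL_5$ is equivalent to $\Sigma_5$-invariance plus $[\rho f]=[f]$. For each $\Sigma_5$-invariant $f=\sum c_m[m]$ surviving Step~2, I would expand $\rho(m)=(x_1+x_2)^{a_1}x_2^{a_2}\cdots x_5^{a_5}$, re-express each resulting monomial in the admissible basis (again via the reductions of \cite{smo}), and impose that $\rho f - f$ vanish in $QP_5$. This yields the final linear system; solving it should leave exactly a one-dimensional space. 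A direct check that the explicit polynomial $q$ satisfies all the $\Sigma_5$- and $\rho$-invariance relations, and represents a nonzero class, then completes the proof.

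\textbf{Main obstacle.} The heavy part is Step~3. The binomial expansion of $\rho(m)$ typically produces many monomials that are neither admissible nor even members of the same $\omega$-block in an obvious way, and each one must be rewritten in terms of the 840 basis monomials using the iterated Cartan-formula reductions inherited from \cite{smo}. Keeping this bookkeeping under control requires a systematic orbit-by-orbit and $\omega$-by-$\omega$ organization, and the bulk of the paper will likely consist of tabulating precisely which admissible monomials the non-admissible image $\rho(m)$ produces after reduction, so that the $\rho$-invariance equations can be written out explicitly and solved.
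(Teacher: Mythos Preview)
Your overall route---organize by weight, compute $\Sigma_5$-invariants, then impose the transvection---is the paper's strategy for Theorem~\ref{thm2}. Two points need correction.

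First, Step~1 asserts that each weight piece is ``closed under $GL_5$''. This is false: the transvection $\rho_5$ preserves only the weight \emph{filtration}, not the individual blocks (for instance $\rho_5(x_1x_2)=x_1x_2+x_2^2$ already mixes the weights $(2)$ and $(0,1)$). Consequently $(QP_5)_{30}^{GL_5}$ does not decompose as $\bigoplus_\omega QP_5(\omega)^{GL_5}$; one only has the filtration bound $\dim(QP_5)_{30}^{GL_5}\leqslant\sum_\omega\dim QP_5(\omega)^{GL_5}$. The paper exploits exactly this: it shows the subquotients $QP_5(\omega)^{GL_5}$ have dimensions $0,\,1,\,0$ for $\omega=(4,3,3,1),\,(2,4,3,1),\,(2)|^4$ respectively (Propositions~\ref{mdt31}, \ref{mdt32}, \ref{mdt33}), and then \emph{lifts} the single subquotient invariant $[x_1^3x_2^5x_3^6x_4^6x_5^{10}]_{(2,4,3,1)}$ back through the filtration, which forces the specific correction terms $p_0,\ldots,p_9$ from the lower weight $(2)|^4$ that make up $q$. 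Your Step~3 does work in the full space, so you would eventually recover this, but Step~1 as phrased (``treat the invariants one $\omega$-block at a time'') is not a valid reduction for the $GL_5$-action. By contrast, $\Sigma_5$ \emph{does} preserve weight, so computing $\Sigma_5$-invariants block by block in Step~2 is legitimate; this is also how the paper organizes Lemma~\ref{bdt31}.

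Second, your plan establishes only $\dim(QP_5)_{30}^{GL_5}=1$. The Corollary asserts that $\varphi_5$ is an epimorphism, which further requires that its image be nonzero. The paper supplies this by citing Lin and Chen for $\mbox{Ext}_{\mathcal A}^{5,35}(\mathbb F_2,\mathbb F_2)=\langle h_0^3h_4^2\rangle$ and Singer's result that $\varphi_5\big((h_0^3h_4^2)^*\big)=[q]$; you need to include this step.
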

This result is also stated in \cite{pp21} but it is based on an unconfirmed result. 
  
\smallskip
The paper is organized as follows. In Section \ref{s2}, we recall some notions and results on the admissible monomials in $P_k$, criterion of Singer on the hit monomials and some needed notations. In Section \ref{s3}, we present a structure of the space $(QP_5)_{20}$ and prove Theorem \ref{thm1}. Theorem \ref{thm2} is proved in Section \ref{s4}. Finally, in Section \ref{s5}, we present some needed data for the proofs of the main results.

\section{Preliminaries on the hit problem}\label{s2}
\setcounter{equation}{0}

In this the section we present some needed notions and results for studying the hit problem such as the weight vector of a monomial, admissible monomial, criterion of Singer for hit monomial from the works of Kameko~\cite{ka}, Singer \cite{si2} and our work \cite{su1,su5}.

\subsection{The admissible monomials and Singer's criterion on hit monomials}\

\medskip
\begin{defns} Let $u =x_1^{c_1}x_2^{c_2}\ldots x_k^{c_k}$ be a monomial  in $P_k$. Denote $\nu_j(w) = c_j, 1 \leqslant j \leqslant k$. We define 
\begin{align*} 
\omega(u)&=(\omega_1(u),\ldots , \omega_t(u), \ldots),\ \
\sigma(u) = (\nu_1(u),\nu_2(u),\ldots ,\nu_k(u)),
\end{align*}
where $\omega_t(u) = \sum_{1\leqslant j \leqslant k} \alpha_{t-1}(\nu_j(w)),\ t \geqslant 1.$
The sequences $\sigma(u)$ is called the exponent vector and $\omega(u)$ is called the weight vector of $u$.  
	
A a sequence of non-negative integers $\omega= (\omega_1,\ldots , \omega_s, \ldots)$ is called a weight vector if $\omega_s = 0$ for $s \gg 0$. We define $\deg \omega = \sum_{s > 0}2^{s-1}\omega_s$, the length $\ell(\omega) = \max\{t : \omega_t >0\}$. We denote $\omega= (\omega_1,\ldots , \omega_r)$ if $\omega_s = 0$ for $s > r$.
	
For weight vectors $\omega= (\omega_1,\ldots , \omega_t, \ldots)$ and $\xi= (\xi_1,\ldots , \xi_t, \ldots)$, we define the concatenation of weight vectors $\omega|\xi = (\omega_1,\ldots , \omega_r,\xi_1,\xi_2,\ldots)$ if $\ell(\omega) = r$ and $(b)|^r = (b)|(b)|\ldots|(b)$, ($r$ times of $(b)$'s), where $b,\, r$ are positive integers.
\end{defns}
We order the sets of weight vectors and exponent vectors by the left lexicographical order.  

For any weight vector, we denote
\begin{align*}
&P_k(\omega) = \langle u \in P_k: \deg u = \deg\omega,\mbox{ and } \omega(u)\leqslant \omega\rangle,\\ 
&P_k^-(\omega) = \langle u \in P_k(\omega): \omega(u) < \omega\rangle.
\end{align*} 

\begin{defns} Suppose $\omega$ is a weight vector and $f,\, f_1,\, f_2$ are polynomials of the same degree in $P_k$. We define
	
i) $f_1 \equiv f_2$ if $f_1+f_2 \in \mathcal A^+P_k$ and $f$ is called hit if $f \equiv 0$.
	
ii) $f_1 \equiv_{\omega} f_2$ if $f_1 + f_2 \in \mathcal A^+P_k+P_k^-(\omega)$. 
\end{defns}

It is easy to see that $\equiv$ and $\equiv_{\omega}$ are equivalence relations. We set 
$$QP_k(\omega)= P_k(\omega)/ ((P_k^-(\omega)+\mathcal A^+P_k\cap P_k(\omega))).$$   

\begin{props}[\cite{su3}] For a weight vector $\omega$, the vector space $QP_k(\omega)$ is an $GL_k$-module. 
\end{props}

\begin{defns} 
Let $u,\, v$ be monomials in $P_k$ with $\deg u = \deg v$, we say that $u < v$ if one of the following conditions satisfies:  

i) $\omega (u) < \omega(v)$;
	
ii) $\omega (u) = \omega(v)$ and $\sigma(u) < \sigma(v).$
\end{defns}

\begin{defns}\label{dfnad}
A monomial $w$ in $P_k$ is said to be inadmissible if there are monomials $w_1,w_2,\ldots, w_t$ such that $w_s<w$ for $s=1,2,\ldots , t$ and $w + \sum_{s=1}^tw_s \in \mathcal A^+P_k.$ 
A monomial $w$ is called admissible	if it is not inadmissible.
\end{defns} 

It is clear that, the set of all admissible monomials of degree $d$ in $P_k$ is a minimal set of generators for $\mathcal{A}$-module $P_k$ in degree $d$.

\begin{defns}\label{spi}  If $z$ is a monomial in $P_k$ such that $\nu_j(z)=2^{s_t}-1$ with $s_t$ a non-negative integer for $t=1,2, \ldots , k$, then $z$ is called a spike. If $s_1>s_2>\ldots >s_{u-1}\geqslant s_u>0$ and $s_t=0$ for $t>u$, then it is called the minimal spike.
\end{defns}

\begin{thms}[Singer~{\cite{si2}}]\label{dlsig} Suppose $y$ is a monomial of degree $d$ in $P_k$ such that $\mu(d) \leqslant k$ and $z$ is the minimal spike of degree $d$. If $\omega(y) < \omega(z)$, then $y$ is hit. 
\end{thms}

\subsection{Some other tools and notations}\

\medskip
We set $P_k^+ = \langle\{y\in P_k :  \nu_s(y)>0, \mbox{ for all } s\}\rangle$ and $P_k^0 =\langle\{y\in P_k : \nu_s(y)=0 \mbox{ for some } s\}\rangle$. The spaces $P_k^0$ and $P_k^+$ are the $\mathcal{A}$-submodules of $P_k$. We denote $QP_k^0 = P_k^0/\mathcal A^+P_k^0$ and  $QP_k^+ = P_k^+/\mathcal A^+P_k^+$. Then we have $QP_k =QP_k^0 \oplus  QP_k^+.$

Set 
$\mathcal N_k =\{(i;I) : I=(s_1,s_2,\ldots,s_t),1 \leqslant  i < s_1 <  \ldots < s_t\leqslant  k,\ 0\leqslant t <k\}.$
For $(i;I) \in \mathcal N_k$, we define the $\mathcal A$-homomorphism of algebras $p_{(i;I)}: P_k \to P_{k-1}$  by setting
\begin{equation}\label{ct23}
p_{(i;I)}(x_s) =\begin{cases} x_s, &\mbox{ if } 1 \leqslant s < i,\\
\sum_{v\in I}x_{v-1}, &\mbox{ if }  s = i,\\  
x_{s-1},&\mbox{ if } i< s \leqslant k.
\end{cases}
\end{equation}

\begin{lems}[\cite{sp}]\label{bdm} If $y$ is a monomial in $P_k$, then $p_{(i;I)}(y) \in P_{k-1}(\omega(y)).$
\end{lems}

From Lemma \ref{bdm}, we can see that if $\omega$ is a weight vector and $y \in P_k(\omega)$, then $p_{(j;J)}(y) \in P_{k-1}(\omega)$. Hence, we have the homomorphisms  
\begin{align*}
&p_{(i;I)}^{(n)} :(QP_k)_n\longrightarrow (QP_{k-1})_n,\ \ p_{(i;I)}^{(\omega)} :QP_k(\omega)\longrightarrow QP_{k-1}(\omega),
\end{align*} 
where $n = \deg\omega$. We set 
\begin{align*}
&\mbox{  } (\widetilde {SF}_k)_n = \bigcap_{(i;I)\in \mathcal N_k}  \mbox{Ker}(p_{(i;I)}^{(n)}),\ \ \widetilde {SF}_k(\omega) = \bigcap_{(i;I)\in \mathcal N_k}  \mbox{Ker}(p_{(i;I)}^{(\omega)}),\\
&\widetilde{QP}_k(\omega) = QP_k(\omega)/\widetilde {SF}_k(\omega), \mbox{ } (\widetilde{QP}_k)_n = (QP_k)_d/(\widetilde {SF}_k)_n.
\end{align*}
We note that $(\widetilde {SF}_k)_n$ and $\widetilde {SF}_k(\omega)$ are respectively the subspaces of $(QP_k^+)_n$ and $QP_k^+(\omega)$.

\begin{notas} We denote $[g]$ the class in $QP_k$ represented by $g\in P_k$. If $g \in  P_k(\omega)$, we denote $[g]_\omega$ the class in $QP_k(\omega)$ represented by $g$. 
If $R$ is a subset of $P_k$, then we denote $[R] = \{[g] : g \in R\}$. If $R \subset P_k(\omega)$, then we set $[R]_\omega = \{[g]_\omega : g \in L\}$. We denote $|R|$ the cardinal of a set $R$.
	
Denote by $B_{k}(n)$ the set of all admissible monomials of degree $n$ in $P_k$ for any nonnegative integer $n$. Set 
$B_{k}^+(n) = B_{k}(n)\cap P_k^+,\ B_{k}^0(n) = B_{k}(n)\cap P_k^0.$
If $\omega$ is a weight vector of degree $n$, we denote 
$B_k(\omega) = B_{k}(n)\cap P_k(\omega),\ B_k^+(\omega) = B_{k}^+(n)\cap P_k(\omega),\ QP_k^+(\omega) := QP_k(\omega)\cap QP_k^+.$

For any subgroup $G \subset GL_k$ and $S \subset P_k(\omega)$, we denote
$$[G(S)]_\omega = \langle [gs]_\omega : g \in G, s \in S\rangle \subset QP_k(\omega).$$ 
Obviously, $[G(S)]_\omega$ is an $G$-submodule of $QP_k(\omega)$. If $\omega$ is a minimal weight vector, then we denote $[G(S)]_\omega = [G(S)]\subset QP_k$.

If $\mathbb J$ is an index set and $\gamma_j \in \mathbb F_2$ with $j \in \mathbb J$, then we denote $\gamma_{\mathbb J} = \sum_{j\in \mathbb J}\gamma_j$.
\end{notas}

For a sequence $T= (t_1, t_2, \ldots, t_r),\, 1 \leqslant t_1 <\ldots < t_r \leqslant k$, we define a monomorphism $\theta_T: P_r \to P_k$ of algebras by setting 
\begin{equation}\label{ctbs}
\theta_T(x_i) = x_{t_i} \ \mbox{ for } \ 1 \leqslant i \leqslant r.
\end{equation} 
Obviously, $\theta_T$ is an $\mathcal A$-homomorphism. If $\omega$ is a weight vector of degree $n$, then 
\[Q\theta_T(P_r^+)(\omega) \cong  QP_r^+(\omega)\mbox{ and } (Q\theta_T(P_r^+))_n \cong (QP_r^+)_n\] 
for $1 \leqslant r \leqslant k$. Here, $Q\theta_T(P_r^+) = \theta_T(P_r^+)/\mathcal A^+\theta_T(P_r^+)$. Then we have
\begin{equation}\label{ctbs2}
B_k(\omega) = \bigcup_{\mu(n) \leqslant r\leqslant k,\atop \ell(T)=r} \theta_T(B_r^+(\omega)),\ B_k(n) = \bigcup_{\mu(n) \leqslant r\leqslant k,\atop \ell(T)=r} \theta_T(B_r^+(n)).
\end{equation}

\section{A structure of $(QP_5)_{20}$ and a proof of Theorem \ref{thm1}}\label{s3}
\setcounter{equation}{0}

By using a computer program with an algorithm of the mathematics system SAGEMATH, T\'in \cite{Tij} proved that $\dim(QP_5)_{20} = 641$. This result is also repeated in Ph\'uc \cite{p24} but the computation for a basis of $(QP_5)_{20}$ is incomplete and the proof of this result in \cite{p24} is false.

Firstly, we recall the results in our work \cite{suz} on the admissible monomials of degree 20 in $P_5$. Recall from \cite{suz} and \cite{p24} that if $x$ is an admissible monomial of degree 20 in $P_5$, then
\begin{equation*}
\omega(x) = (4)|(2)|(1)|^2 \mbox{ or } \omega(x) = (4)|(2)|(3) \mbox{ or } \omega(x) =(4)|^{2}|(2).
\end{equation*}
From this result we get 
$$
(QP_5)_{20}	\cong QP_5((4)|(2)|(1)|^2) \bigoplus QP_5((4)|(2)|(3)) \bigoplus QP_5((4)|^{2}|(2)).
$$

In \cite{p24}, the author says that
\begin{align*}&\dim QP_5((4)|(2)|(1)|^2) = 450,\\ &\dim QP_5((4)|(2)|(3)) = 70,\\ &\dim QP_5((4)|^{2}|(2)) = 121.
\end{align*}
However, the detailed computation is only presented for the space $QP_5((4)|(2)|(3))$ but the proof of it is false. 

The set of all admissible monomials of the degree 20 in $P_5$ is explicitly presented in Subsection \ref{s50}.

Consider the spaces $(\widetilde {SF}_k)_n$ and $\widetilde {SF}_k(\omega)$ as defined in \cite[Page 5]{suw}.

\subsection{Computation of $(\widetilde{SF}_5)_{20}$}\

\smallskip
We determined $\widetilde{SF}_5{((4)|(2)|(1)|^2)}$. The following is announced \cite{suz} with no the proof.
\begin{props}\label{mdsp1} We have 
$$\widetilde{SF}_5{((4)|(2)|(1)|^2)} = \langle [g_u]: 1 \leqslant u \leqslant 10\rangle,$$
where the polynomials $g_u$ are explicitly determined in Subsection \ref{s51}.
\end{props}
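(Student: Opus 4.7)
The plan is to compute $\widetilde{SF}_5((4)|(2)|(1)|^2)$ directly by working with the admissible monomial basis of $QP_5((4)|(2)|(1)|^2)$ listed in Subsection \ref{s50}. By the definition recalled from \cite[Page 5]{suw}, $\widetilde{SF}_5(\omega)$ is a fixed subspace of $QP_5(\omega)$ cut out by explicit invariance conditions under permutation-type operators, so the whole problem reduces to a finite-dimensional linear algebra computation over $\mathbb{F}_2$.

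First, I would partition the 450 admissible monomials of weight vector $(4)|(2)|(1)|^2$ into orbits under the natural $\Sigma_5$-action by permutation of $x_1,\ldots,x_5$. On each such orbit, the permutation image of an admissible monomial need not itself be admissible, so a straightening step is required. Using the hit relations developed in \cite{su1,su5} together with Kameko's criterion and the criterion of Singer invoked in \cite[Section 2]{suw}, every permuted monomial can be rewritten in the 450-element admissible basis. This produces, for a chosen generating set of $\Sigma_5$ (for which it suffices to take the transposition $(1\,2)$ and the cycle $(1\,2\,3\,4\,5)$), explicit matrices over $\mathbb{F}_2$ representing the induced action on $QP_5((4)|(2)|(1)|^2)$.

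Next, I would compute the joint fixed subspace of these matrices by straightforward row reduction, then impose the additional conditions built into the tilde in the definition of $\widetilde{SF}_5$ from \cite{suw}. To conclude, I would verify that the ten polynomials $g_1,\ldots,g_{10}$ constructed in Subsection \ref{s51} each lie in this resulting subspace, that they are linearly independent when written out in the admissible basis, and that their span exhausts the computed fixed subspace. The combination of these three checks yields the claimed equality $\widetilde{SF}_5((4)|(2)|(1)|^2) = \langle [g_u]: 1 \leqslant u \leqslant 10\rangle$.

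The main obstacle is the straightening bookkeeping: for each generating permutation, many of the 450 permutation images fail to be admissible and must be rewritten using several rounds of hit-element substitutions before their admissible-basis expansions stabilize. This is a purely mechanical but voluminous computation, organized most efficiently orbit by orbit so that symmetry of the hit relations can be reused. Once the straightening tables are fixed, the identification of the ten generators $g_u$ reduces to linear algebra over $\mathbb{F}_2$, and no new structural results about $QP_5$ are required beyond those already employed in the proof of the admissible decomposition in \cite{suz}.
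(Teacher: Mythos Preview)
There is a genuine gap: you have misidentified what $\widetilde{SF}_5(\omega)$ is. It is \emph{not} a $\Sigma_5$-fixed subspace of $QP_5(\omega)$ with extra conditions tacked on. As the paper's own proof makes explicit (and as is set up in \cite[Page~5]{suw}), an element $[\eta]$ lies in $\widetilde{SF}_5(\omega)$ precisely when $p_{(i;I)}(\eta)\equiv_\omega 0$ for every $(i;I)\in\mathcal N_5$, where the $p_{(i;I)}:P_5\to P_4$ are specific substitution homomorphisms landing in four variables. These are not permutation operators on $P_5$, and the conditions they impose are not invariance conditions. Moreover $\widetilde{SF}_5(\omega)$ sits inside $QP_5^+(\omega)$, which here has dimension $225$, not inside the full $450$-dimensional $QP_5(\omega)$ that you propose to work in.

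Your plan would instead compute (or at least begin by computing) $QP_5(\omega)^{\Sigma_5}$, which the paper treats separately in Lemma~\ref{bdt1} and which has dimension $7$, not $10$. The two subspaces are not comparable: neither contains the other, and no amount of ``additional tilde conditions'' applied after passing to $\Sigma_5$-invariants will recover $\widetilde{SF}_5(\omega)$. The correct computation, carried out in the paper, proceeds by writing a general $\eta\in QP_5^+(\omega)$ in the basis $\{a_t:1\leqslant t\leqslant 225\}$, subtracting off the $g_u$ (whose leading monomials are $a_{216},\ldots,a_{225}$) to reduce to coefficients $\gamma_1,\ldots,\gamma_{215}$, and then applying the homomorphisms $p_{(i;I)}$ one by one, reading off linear relations among the $\gamma_t$ from the resulting expansions in the basis $\{v_s\}$ of $QP_4(\omega)$ until all $\gamma_t$ are forced to vanish.
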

\begin{proof} By a direct computation, it is not difficult to verify that $p_{(i;I)}(g_u) \equiv 0$ for all $(i;I)\in \mathcal N_5$. Hence, $[g_u] \in \widetilde{SF}_5{((4)|(2)|(1)|^2)}$ for $1 \leqslant u \leqslant 10$.
The set $\{[a_t] :1\leqslant t \leqslant 225\}$ is a basis of  $QP_5^+((4)|(2)|(1)|^2)$ (see Subsection \ref{s50}), hence if $[\theta] \in \widetilde {SF}_{5}((4)|(2)|(1)|^2)\subset QP_5^+((4)|(2)|(1)|^2)$, then we have $\theta \equiv \sum_{1\leqslant t \leqslant 225}\bar\gamma_ta_{t}$, where $\bar\gamma_t \in \mathbb F_2$. The leading monomial of $g_u$ is $a_{(u+215)},\, 1\leqslant u \leqslant 10$. Hence, we get 
\begin{equation}\label{ctd612}
\widetilde\theta := \theta + \sum_{1\leqslant u \leqslant 10}\bar\gamma_{(u+215)}g_u\equiv \sum_{1\leqslant t \leqslant 215}\gamma_ta_{t},
\end{equation}
where $\gamma_t \in \mathbb F_2$, $1\leqslant t \leqslant 215$, and $p_{(i;I)}(\widetilde\theta) \equiv 0$ for all $(i;I) \in \mathcal N_5$. For simplicity, we denote $\gamma_{\mathbb J} = \sum_{j \in \mathbb J}\gamma_j$ for any $\mathbb J \subset \{i\in \mathbb N:1\leqslant i \leqslant 215\}$.

Let $u_s,\, 1\leqslant s \leqslant 45$, be as in Subsection \ref{s50}. Apply $p_{(1;j)}$, $2 \leqslant j \leqslant 5,$ to (\ref{ctd612}) to obtain
\begin{align*}
p_{(1;2)}(\widetilde\theta) &\equiv \gamma_{10}v_{16} + \gamma_{46}v_{17} + \gamma_{47}v_{18} + \gamma_{48}v_{19} + \gamma_{49}v_{20} + \gamma_{11}v_{21} + \gamma_{50}v_{22}\\ 
&\quad + \gamma_{51}v_{23} + \gamma_{52}v_{24} + \gamma_{53}v_{25} + \gamma_{54}v_{26} + \gamma_{55}v_{27} + \gamma_{56}v_{28} + \gamma_{57}v_{29}\\ 
&\quad + \gamma_{58}v_{30} + \gamma_{59}v_{31} + \gamma_{60}v_{32} + \gamma_{12}v_{33} + \gamma_{\{97,169\}}v_{34} + \gamma_{\{98,170\}}v_{35}\\ 
&\quad + \gamma_{99}v_{36} + \gamma_{\{100,171\}}v_{37} + \gamma_{\{101,172\}}v_{38} + \gamma_{\{102,173\}}v_{39} + \gamma_{103}v_{40}\\ 
&\quad + \gamma_{104}v_{41} + \gamma_{\{105,174\}}v_{42} + \gamma_{121}v_{43} + \gamma_{122}v_{44} + \gamma_{123}v_{45} \equiv 0,\\ 
p_{(1;3)}(\widetilde\theta) &\equiv \gamma_{5}v_{4} + \gamma_{20}v_{5} + \gamma_{21}v_{6} + \gamma_{22}v_{7} + \gamma_{23}v_{8} + \gamma_{6}v_{9} + \gamma_{\{34,141\}}v_{10}\\ 
&\quad + \gamma_{\{35,142\}}v_{11} + \gamma_{\{36,143\}}v_{12} + \gamma_{\{37,144\}}v_{13} + \gamma_{\{44,155,206\}}v_{14}\\ 
&\quad + \gamma_{\{45,156,207\}}v_{15} + \gamma_{71}v_{22} + \gamma_{72}v_{23} + \gamma_{73}v_{24} + \gamma_{78}v_{25} + \gamma_{79}v_{26}\\ 
&\quad + \gamma_{80}v_{27} + \gamma_{81}v_{28} + \gamma_{\{88,164,185\}}v_{29} + \gamma_{\{89,165,186\}}v_{30} + \gamma_{\{92,185\}}v_{31}\\ 
&\quad + \gamma_{\{93,186\}}v_{32} + \gamma_{\{96,168,213\}}v_{33} + \gamma_{112}v_{38} + \gamma_{113}v_{39} + \gamma_{116}v_{40}\\ 
&\quad + \gamma_{117}v_{41} + \gamma_{\{120,192\}}v_{42} + \gamma_{9}v_{45} \equiv 0,\\
p_{(1;4)}(\widetilde\theta) &\equiv \gamma_{2}v_{1} + \gamma_{\{17,127\}}v_{2} + \gamma_{\{19,131,199\}}v_{3} + \gamma_{25}v_{5} + \gamma_{27}v_{6} + \gamma_{\{29,138,148\}}v_{7}\\ 
&\quad + \gamma_{\{31,148\}}v_{8} + \gamma_{\{33,140,205\}}v_{9} + \gamma_{39}v_{11} + \gamma_{41}v_{12} + \gamma_{\{43,154\}}v_{13}\\ 
&\quad + \gamma_{4}v_{15} + \gamma_{62}v_{17} + \gamma_{64}v_{18} + \gamma_{\{66,161,178\}}v_{19} + \gamma_{\{68,178\}}v_{20}\\ 
&\quad + \gamma_{\{70,163,212\}}v_{21} + \gamma_{75}v_{23} + \gamma_{\{77,184\}}v_{24} + \gamma_{83}v_{26} + \gamma_{85}v_{27}\\ 
&\quad + \gamma_{\{87,167,184\}}v_{28} + \gamma_{91}v_{30} + \gamma_{95}v_{32} + \gamma_{107}v_{35} + \gamma_{109}v_{36}\\ 
&\quad + \gamma_{\{111,191\}}v_{37} + \gamma_{115}v_{39} + \gamma_{119}v_{41} + \gamma_{8}v_{44} \equiv 0,\\
p_{(1;5)}(\widetilde\theta) &\equiv \gamma_{\{16,126,198\}}v_{1} + \gamma_{\{18,130\}}v_{2} + \gamma_{1}v_{3} + \gamma_{\{24,137,204\}}v_{4} + \gamma_{\{26,147\}}v_{5}\\ 
&\quad + \gamma_{\{28,139,147\}}v_{6} + \gamma_{30}v_{7} + \gamma_{32}v_{8} + \gamma_{\{38,153\}}v_{10} + \gamma_{40}v_{11} + \gamma_{42}v_{12}\\ 
&\quad + \gamma_{3}v_{14} + \gamma_{\{61,160,211\}}v_{16} + \gamma_{\{63,177\}}v_{17} + \gamma_{\{65,162,177\}}v_{18} + \gamma_{67}v_{19}\\ 
&\quad + \gamma_{69}v_{20} + \gamma_{\{74,183\}}v_{22} + \gamma_{76}v_{23} + \gamma_{\{82,166,183\}}v_{25} + \gamma_{84}v_{26}\\ 
&\quad + \gamma_{86}v_{27} + \gamma_{90}v_{29} + \gamma_{94}v_{31} + \gamma_{\{106,190\}}v_{34} + \gamma_{108}v_{35} + \gamma_{110}v_{36}\\ 
&\quad + \gamma_{114}v_{38} + \gamma_{118}v_{40} + \gamma_{7}v_{43} \equiv 0.
\end{align*}
	
By computing from above relations we have
\begin{equation}\label{c61}
\gamma_t = 0 \mbox{ for } t \in \mathbb L_1,\ \gamma_u= \gamma_u \mbox{ for } (u,v) \in \mathbb M_1,
\end{equation}
where $\mathbb L_1 = \{$1,\, 2,\, 3,\, 4,\, 5,\, 6,\, 7,\, 8,\, 9,\, 10,\, 11,\, 12,\, 20,\, 21,\, 22,\, 23,\, 25,\, 27,\, 30,\, 32,\, 39,\, 40,\, 41,\, 42,\, 46,\, 47,\, 48,\, 49,\, 50,\, 51,\, 52,\, 53,\, 54,\, 55,\, 56,\, 57,\, 58,\, 59,\, 60,\, 62,\, 64,\, 67,\, 69,\, 71,\, 72,\, 73,\, 75,\, 76,\, 78,\, 79,\, 80,\, 81,\, 83,\, 84,\, 85,\, 86,\, 90,\, 91,\, 94,\, 95,\, 99,\, 103,\, 104,\, 107,\, 108,\, 109,\, 110,\, 112,\, 113,\, 114,\, 115,\, 116,\, 117,\, 118,\, 119,\, 121,\, 122,\, 123$\}$ and $\mathbb M_1 = \{$(17,127),\, (18,130),\, (26,147),\, (31,148),\, (34,141),\, (35,142),\, (36,143),\, (37,144),\, (38,153),\, (43,154),\, (63,177),\, (68,178),\, (74,183),\, (77,184),\, (92,185),\, (93,186),\, (97,169),\, (98,170),\, (100,171),\, (101,172),\, (102,173),\, (105,174),\, (106,190),\, (111,191),\, (120,192)$\}$. 

By applying the homomorphisms $p_{(2;u)}, u = 3,4,5$ to \eqref{ctd612} and using \eqref{c61}, we obtain
\begin{align*}
p_{(2;3)}(\widetilde\theta) &\equiv \gamma_{\{34,97\}}v_{10} + \gamma_{\{35,98\}}v_{11} + \gamma_{36}v_{12} + \gamma_{\{37,100\}}v_{13} + \gamma_{\{44,92\}}v_{14}\\ 
&\quad + \gamma_{\{45,93\}}v_{15} + \gamma_{134}v_{22} + \gamma_{135}v_{23} + \gamma_{136}v_{24} + \gamma_{\{34,97\}}v_{25}\\ 
&\quad + \gamma_{\{35,98\}}v_{26} + \gamma_{36}v_{27} + \gamma_{\{37,100\}}v_{28} + \gamma_{\{101,151,164,181\}}v_{29}\\ 
&\quad + \gamma_{\{102,152,165,182\}}v_{30} + \gamma_{\{92,155\}}v_{31} + \gamma_{\{93,156\}}v_{32}\\ 
&\quad + \gamma_{\{105,120,159,168,189,195\}}v_{33} + \gamma_{202}v_{38} + \gamma_{203}v_{39} + \gamma_{206}v_{40}\\ 
&\quad + \gamma_{207}v_{41} + \gamma_{\{210,213\}}v_{42} + \gamma_{15}v_{45} \equiv 0,\\
p_{(2;4)}(\widetilde\theta) &\equiv \gamma_{\{17,97\}}v_{2} + \gamma_{\{19,68\}}v_{3} + \gamma_{\{29,101\}}v_{7} + \gamma_{31}v_{8} + \gamma_{\{33,77,102\}}v_{9}\\ 
&\quad + \gamma_{\{43,105\}}v_{13} + \gamma_{125}v_{17} + \gamma_{\{17,97\}}v_{18} + \gamma_{\{98,129,161,176\}}v_{19}\\ 
&\quad + \gamma_{\{68,131\}}v_{20} + \gamma_{\{100,111,133,163,180,194\}}v_{21} + \gamma_{\{101,138\}}v_{23}\\ 
&\quad + \gamma_{\{77,102,140\}}v_{24} + \gamma_{146}v_{26} + \gamma_{31}v_{27} + \gamma_{\{150,167,188\}}v_{28}\\ 
&\quad + \gamma_{\{43,105\}}v_{30} + \gamma_{158}v_{32} + \gamma_{197}v_{35} + \gamma_{199}v_{36} + \gamma_{\{201,212,215\}}v_{37}\\ 
&\quad + \gamma_{205}v_{39} + \gamma_{209}v_{41} + \gamma_{14}v_{44} \equiv 0,\\
p_{(2;5)}(\widetilde\theta) &\equiv \gamma_{\{16,63,98\}}v_{1} + \gamma_{\{18,100\}}v_{2} + \gamma_{\{24,74,101\}}v_{4} + \gamma_{26}v_{5} + \gamma_{\{28,102\}}v_{6}\\ 
&\quad + \gamma_{\{38,105\}}v_{10} + \gamma_{\{97,106,124,160,175,193\}}v_{16} + \gamma_{\{63,98,126\}}v_{17}\\ 
&\quad + \gamma_{\{128,162,179\}}v_{18} + \gamma_{\{18,100\}}v_{19} + \gamma_{132}v_{20} + \gamma_{\{74,101,137\}}v_{22}\\ 
&\quad + \gamma_{\{102,139\}}v_{23} + \gamma_{\{145,166,187\}}v_{25} + \gamma_{26}v_{26} + \gamma_{149}v_{27}\\ 
&\quad + \gamma_{\{38,105\}}v_{29} + \gamma_{157}v_{31} + \gamma_{\{196,211,214\}}v_{34} + \gamma_{198}v_{35} + \gamma_{200}v_{36}\\ 
&\quad + \gamma_{204}v_{38} + \gamma_{208}v_{40} + \gamma_{13}v_{43} \equiv 0.
\end{align*}

Computing from the above equalities gives
\begin{equation}\label{c62}
\gamma_t = 0 \mbox{ for } t \in \mathbb L_2,\ \gamma_u= \gamma_v \mbox{ for } (u,v) \in \mathbb M_2,
\end{equation}
where $\mathbb L_2 = \{$13,\, 14,\, 15,\, 26,\, 31,\, 36,\, 125,\, 132,\, 134,\, 135,\, 136,\, 143,\, 146,\, 147,\, 148,\, 149,\, 157,\, 158,\, 197,\, 198,\, 199,\, 200,\, 202,\, 203,\, 204,\, 205,\, 206,\, 207,\, 208,\, 209$\}$ and $\mathbb M_2 = \{$(16,126),\, (17,34),\, (17,97),\, (18,37), (18,100), (19,68), (19,131), (24,137), (28,102), (28,139), (29,101), (29,138), (33,140), (35,98), (38,43), (38,105),\, (44,92),\, (44,155),\, (45,93),\, (45,156),\, (210,213)$\}$.

Applying the homomorphisms $p_{(i;j)}$, $3 \leqslant i < j \leqslant 5$ to \eqref{ctd612} and using the relations \eqref{c61}, \eqref{c62} give
\begin{align*}
p_{(3;4)}(\widetilde\theta) &\equiv \gamma_{\{19,44\}}v_{3} + \gamma_{\{66,88\}}v_{7} + \gamma_{\{19,44\}}v_{8} + \gamma_{\{45,70,77,87,89,96\}}v_{9}\\ 
&\quad + \gamma_{\{111,120\}}v_{13} + \gamma_{\{29,35,129,151\}}v_{19} + \gamma_{\{19,44\}}v_{20}\\ 
&\quad + \gamma_{\{18,33,38,45,133,150,152,159\}}v_{21} + \gamma_{\{161,164\}}v_{23} + \gamma_{\{163,165,167,168\}}v_{24}\\ 
&\quad + \gamma_{\{176,181\}}v_{26} + \gamma_{\{19,44\}}v_{27} + \gamma_{\{45,77,180,182,188,189\}}v_{28} + \gamma_{\{111,120\}}v_{30}\\ 
&\quad + \gamma_{\{194,195\}}v_{32} + \gamma_{\{201,210\}}v_{37} + \gamma_{\{210,212\}}v_{39} + \gamma_{215}v_{41} \equiv 0,\\
p_{(3;5)}(\widetilde\theta) &\equiv \gamma_{\{16,35,45\}}v_{1} + \gamma_{\{44,61,74,82,88,96\}}v_{4} + \gamma_{\{45,63\}}v_{5} + \gamma_{\{65,89\}}v_{6}\\ 
&\quad + \gamma_{\{106,120\}}v_{10} + \gamma_{\{17,24,38,44,124,145,151,159\}}v_{16} + \gamma_{\{16,35,45\}}v_{17}\\ 
&\quad + \gamma_{\{28,128,152\}}v_{18} + \gamma_{\{160,164,166,168\}}v_{22} + \gamma_{\{162,165\}}v_{23}\\ 
&\quad + \gamma_{\{44,74,175,181,187,189\}}v_{25} + \gamma_{\{45,63\}}v_{26} + \gamma_{\{179,182\}}v_{27} + \gamma_{\{106,120\}}v_{29}\\ 
&\quad + \gamma_{\{193,195\}}v_{31} + \gamma_{\{196,210\}}v_{34} + \gamma_{\{210,211\}}v_{38} + \gamma_{214}v_{40} \equiv 0,\\
p_{(4;5)}(\widetilde\theta) &\equiv \gamma_{\{24,28,29,33\}}v_{1} + \gamma_{\{19,61,63,65,66,70\}}v_{4} + \gamma_{\{74,77\}}v_{5} + \gamma_{\{82,87\}}v_{6}\\ 
&\quad + \gamma_{\{106,111\}}v_{10} + \gamma_{\{16,17,18,19,124,128,129,133\}}v_{16} + \gamma_{\{24,28,29,33\}}v_{17}\\ 
&\quad + \gamma_{\{145,150\}}v_{18} + \gamma_{\{160,161,162,163\}}v_{22} + \gamma_{\{166,167\}}v_{23}\\ 
&\quad + \gamma_{\{19,63,175,176,179,180\}}v_{25} + \gamma_{\{74,77\}}v_{26} + \gamma_{\{187,188\}}v_{27} + \gamma_{\{106,111\}}v_{29}\\ 
&\quad + \gamma_{\{193,194\}}v_{31} + \gamma_{\{196,201\}}v_{34} + \gamma_{\{211,212\}}v_{38} + \gamma_{\{214,215\}}v_{40} \equiv 0.
\end{align*}

From these equalities it implies
\begin{equation}\label{c63}
\gamma_{214} = \gamma_{214} = 0 \mbox{ and }  \gamma_u= \gamma_v \mbox{ for } (u,v) \in \mathbb M_3,
\end{equation}
where $\mathbb M_3 = \{$(19,44),\, (45,63), (65,89), (66,88), (74,77), (82,87), (106,111), (106,120), (145,150), (161,164), (162,165), (166,167), (176,181), (179,182), (187,188), (193,194), (193,195), (196,201), (196,210), (196,211), (196,212)$\}$.

Apply the homomorphisms $p_{(1;(2,3))}$, $p_{(1;(2,4))}$ and $p_{(1;(3,4))}$ to \eqref{ctd612} and using \eqref{c61}, \eqref{c62}, \eqref{c63} to get
\begin{align*}
p_{(1;(2,3))}(\widetilde\theta) &\equiv \gamma_{17}v_{22} + \gamma_{35}v_{23} + \gamma_{18}v_{24} + \gamma_{\{19,66,151,161\}}v_{29} + \gamma_{\{45,65,152,162\}}v_{30}\\ 
&\quad + \gamma_{19}v_{31} + \gamma_{45}v_{32} + \gamma_{\{96,159,168\}}v_{33} + \gamma_{176}v_{38} + \gamma_{179}v_{39} + \gamma_{19}v_{40}\\ 
&\quad + \gamma_{45}v_{41} + \gamma_{189}v_{42} + \gamma_{193}v_{45} \equiv 0,\\
p_{(1;(2,4))}(\widetilde\theta) &\equiv \gamma_{17}v_{17} + \gamma_{\{19,66,129,161\}}v_{19} + \gamma_{19}v_{20} + \gamma_{\{70,133,163\}}v_{21}\\ 
&\quad + \gamma_{\{28,33\}}v_{24} + \gamma_{29}v_{26} + \gamma_{\{28,74,82,145,166\}}v_{28} + \gamma_{38}v_{32} + \gamma_{176}v_{35}\\ 
&\quad + \gamma_{19}v_{36} + \gamma_{180}v_{37} + \gamma_{74}v_{39} + \gamma_{187}v_{41} + \gamma_{193}v_{44} \equiv 0,\\
p_{(1;(3,4))}(\widetilde\theta) &\equiv \gamma_{17}v_{5} + \gamma_{129}v_{7} + \gamma_{19}v_{8} + \gamma_{\{133,196\}}v_{9} + \gamma_{\{19,151\}}v_{11} + \gamma_{19}v_{12}\\ 
&\quad + \gamma_{\{145,152\}}v_{13} + \gamma_{\{159,196\}}v_{15} + \gamma_{\{19,66,161\}}v_{19} + \gamma_{\{70,163,196\}}v_{21}\\ 
&\quad + \gamma_{\{19,161\}}v_{23} + \gamma_{\{163,179,180,196\}}v_{24} + \gamma_{\{19,161\}}v_{26}\\ 
&\quad + \gamma_{\{74,82,162,166,179,180\}}v_{28} + \gamma_{\{19,66,161\}}v_{29}\\ 
&\quad + \gamma_{\{45,65,162,166,187,189\}}v_{30} + \gamma_{\{168,187,189,196\}}v_{32}\\ 
&\quad + \gamma_{\{96,168,196\}}v_{33} + \gamma_{106}v_{39} + \gamma_{106}v_{41} \equiv 0.  
\end{align*}
By a direct computation using the above equalities we obtain
\begin{equation}\label{c64}
\gamma_j = 0 \mbox{ for } j \in \mathbb L_4,\  \gamma_j= \gamma_{124} \mbox{ for } j \in \mathbb L_5,
\end{equation}
where $\mathbb L_4 = \{$16,\, 17,\, 18,\, 19,\, 24,\, 28,\, 29,\, 33,\, 34,\, 35,\, 37,\, 38,\, 43,\, 44,\, 45,\, 61,\, 63,\, 65,\, 66,\, 68,\, 70,\, 74,\, 77,\, 82,\, 87,\, 88,\, 89,\, 92,\, 93,\, 96,\, 97,\, 98,\, 100,\, 101,\, 102,\, 105,\, 106,\, 111,\, 120,\, 126,\, 127,\, 128,\, 129,\, 130,\, 131,\, 137,\, 138,\, 139,\, 140,\, 141,\, 142,\, 144,\, 145,\, 150,\, 151,\, 152,\, 153,\, 154,\, 155,\, 156,\, 161,\, 162,\, 164,\, 165,\, 166,\, 167,\, 169,\, 170,\, 171,\, 172,\, 173,\, 174,\, 175,\, 176,\, 177,\, 178,\, 179,\, 180,\, 181,\, 182,\, 183,\, 184,\, 185,\, 186,\, 187,\, 188,\, 189,\, 190,\, 191,\, 192,\, 193,\, 194,\, 195$\}$ and $\mathbb L_5 = \{$133,\, 159,\, 160,\, 163,\, 168,\, 196,\, 201,\, 210,\, 211,\, 212,\, 213$\}$. 

Then, $\widetilde \theta \equiv \gamma_{124}\sum_{j\in \mathbb J_5}w_j$. By a simple computation we get
$$p_{(2;(3,4))}(\widetilde \theta) \equiv \gamma_{124}(v_{24} + v_{32} + v_{39} + v_{41}) \equiv 0.$$
This equality implies that $\gamma_{124} =0$ and $\gamma_t = 0$ for $1 \leqslant t \leqslant 215$. Hence, we get $\theta \equiv \sum_{1\leqslant u \leqslant 10}\bar\gamma_{(u+215)}g_u$. The proposition is proved.
\end{proof}

\begin{props}\label{mdp2} Set $\omega = (4)|(2)|(3)$, we have 
$$\widetilde{SF}_5({\omega}) = \langle [g_{11}]_\omega\rangle,$$
where the polynomials $g_{11}$ is explicitly determined in Subsection \ref{s51}.
\end{props}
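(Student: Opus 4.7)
The plan is to mirror the strategy used for Proposition \ref{mdsp1}, but adapted to the smaller space $QP_5((4)|(2)|(3))$, which by the announced dimension count has dimension $70$. First I would verify directly by the Cartan formula that $p_{(i;I)}(g_{11}) \equiv 0$ for every $(i;I) \in \mathcal N_5$, so that $[g_{11}]_\omega$ lies in $\widetilde{SF}_5(\omega)$. This inclusion is computational but routine once $g_{11}$ is written down explicitly (it is recorded in Subsection \ref{s51}).

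Next I would fix the admissible basis $\{[b_t]_\omega : 1 \leqslant t \leqslant N\}$ of $QP_5^+((4)|(2)|(3))$ (where $N$ is the positive part of the dimension $70$ after factoring out the spikes already handled separately), taken from the list in Subsection \ref{s50}. Any class $[\eta]_\omega \in \widetilde{SF}_5(\omega) \subset QP_5^+(\omega)$ is then expressed as $\eta \equiv \sum_t \gamma_t b_t$ modulo the hit classes, with $\gamma_t \in \mathbb F_2$. Since the leading monomial of $g_{11}$ is the last admissible $b_N$, I can absorb the coefficient $\gamma_N$ into a multiple of $g_{11}$ and reduce to the situation
\begin{equation*}
\widetilde\eta := \eta + \gamma_N g_{11} \equiv \sum_{1\leqslant t \leqslant N-1} \gamma_t b_t, \qquad p_{(i;I)}(\widetilde\eta) \equiv 0 \text{ for all } (i;I) \in \mathcal N_5.
\end{equation*}

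The main work is then to apply, in order, the homomorphisms $p_{(1;j)}$ for $2 \leqslant j \leqslant 5$, then $p_{(2;j)}$ for $j = 3,4,5$, then the $p_{(i;j)}$ with $3\leqslant i<j\leqslant 5$, and finally the iterated $p_{(1;(u,v))}$ and $p_{(2;(u,v))}$. Each application expresses $p_{(i;I)}(\widetilde\eta)$ as an $\mathbb F_2$-linear combination of a basis of admissible monomials in $QP_4$ (or $QP_3$), and each such expression being zero yields linear relations among the $\gamma_t$. I would organise the output exactly as in Proposition \ref{mdsp1}, recording index sets $\mathbb L_j$ of coefficients forced to vanish and index pairs $\mathbb M_j$ of coefficients forced to coincide. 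Because the target space is only one-dimensional (spanned by $[g_{11}]_\omega$), I expect the chain of linear constraints to collapse all the $\gamma_t$ for $t \leqslant N-1$ to $0$, leaving $\widetilde\eta \equiv 0$ and thus $\eta \equiv \gamma_N g_{11}$.

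The main obstacle will be bookkeeping rather than conceptual: the weight vector $(4)|(2)|(3)$ has shorter profile than $(4)|(2)|(1)|^2$, so the number of admissible monomials is much smaller, but one still needs to be careful that each $p_{(i;I)}(b_t)$ is rewritten in the admissible basis of the smaller polynomial ring before reading off coefficients, since cancellations among non-admissible monomials are what produce the binding relations. A secondary check is that a single residual free parameter corresponding to the span of $[g_{11}]_\omega$ survives, confirming that the inclusion $\langle [g_{11}]_\omega\rangle \subseteq \widetilde{SF}_5(\omega)$ verified in the first step is in fact an equality. Once every $\gamma_t$ with $t < N$ is eliminated, the proposition follows immediately.
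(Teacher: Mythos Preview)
Your proposal is correct and follows essentially the same route as the paper: verify $[g_{11}]_\omega \in \widetilde{SF}_5(\omega)$ by direct computation, subtract off the $g_{11}$-component using its leading monomial $b_{50}$, and then apply the homomorphisms $p_{(i;I)}$ in stages to force all remaining coefficients to zero. The only minor slips are notational: the congruences should be $\equiv_\omega$ rather than $\equiv$, the relevant $N$ is $50$ (since $|B_5^+(\omega)|=50$ and $|B_5^0(\omega)|=20$), and the verification of $p_{(i;I)}(g_{11})\equiv_\omega 0$ is a direct substitution check, not an application of the Cartan formula.
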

\begin{proof} The proposition is proved by an argument analogous to the one in the proof of Proposition \ref{mdsp1}.
By a direct computation, we can easily verify that $p_{(i;I)}(g_{11}) \equiv_\omega 0$ for all $(i;I)\in \mathcal N_5$. Hence, $[g_{11}] \in \widetilde{SF}_5{(\omega)}$.
The set $\{[b_t]_\omega :1\leqslant t \leqslant 50\}$ is a basis of  $QP_5^+(\omega)$ (see Subsection \ref{s50}), hence if $\theta^* \in \widetilde {SF}_{5}(\omega)$, then we have $\theta^* \equiv_\omega \sum_{1\leqslant t \leqslant 50}\gamma_t^*b_{t}$, where $\gamma_t^* \in \mathbb F_2$. The leading monomial of $g_{11}$ is $b_{50}$. Hence, we get 
\begin{equation}\label{dtd612}
\overline{\theta} := \theta^* + \gamma_{50}^*g_{11}\equiv \sum_{1\leqslant t \leqslant 49}\gamma_tb_{t},
\end{equation}
where $\gamma_j \in \mathbb F_2$, $1\leqslant j \leqslant 49$, and $p_{i;I}(\widetilde\theta) \equiv 0$ for all $(i;I) \in \mathcal N_5$. 

Let $v_s,\, 46\leqslant s \leqslant 49$, be as in Subsection \ref{s50}. Applying $p_{(1;u)}$, $2 \leqslant u \leqslant 5,$ to (\ref{dtd612}) and using Theorem \ref{dlsig} give
\begin{align*}
p_{(1;2)}(\overline{\theta}) &\equiv_\omega \gamma_{1}v_{46} + \gamma_{2}v_{47} + \gamma_{3}v_{48} + \gamma_{\{34,46\}}v_{49} \equiv_\omega 0,\\ 
p_{(1;3)}(\overline{\theta}) &\equiv_\omega \gamma_{6}v_{46} + \gamma_{7}v_{47} + \gamma_{\{33,43\}}v_{48} + \gamma_{4}v_{49} \equiv_\omega 0,\\
p_{(1;4)}(\overline{\theta}) &\equiv_\omega \gamma_{8}v_{46} + \gamma_{\{32,44,47\}}v_{47} + \gamma_{10}v_{48} + \gamma_{12}v_{49} \equiv_\omega 0,\\
p_{(1;5)}(\overline{\theta}) &\equiv_\omega \gamma_{\{31,45,48,49\}}v_{46} + \gamma_{9}v_{47} + \gamma_{11}v_{48} + \gamma_{13}v_{49} \equiv_\omega 0,\\
p_{(2;3)}(\overline{\theta}) &\equiv_\omega \gamma_{14}v_{46} + \gamma_{15}v_{47} + \gamma_{\{37,41,43,46\}}v_{48} + \gamma_{5}v_{49} \equiv_\omega 0,\\
p_{(2;4)}(\overline{\theta}) &\equiv_\omega \gamma_{16}v_{46} + \gamma_{\{36,44\}}v_{47} + \gamma_{18}v_{48} + \gamma_{26}v_{49} \equiv_\omega 0,\\
p_{(2;5)}(\overline{\theta}) &\equiv_\omega \gamma_{\{35,42,45\}}v_{46} + \gamma_{17}v_{47} + \gamma_{19}v_{48} + \gamma_{27}v_{49} \equiv_\omega 0,\\
p_{(3;4)}(\overline{\theta}) &\equiv_\omega \gamma_{20}v_{46} + \gamma_{\{39,41,47\}}v_{47} + \gamma_{23}v_{48} + \gamma_{28}v_{49} \equiv_\omega 0,\\
p_{(3;5)}(\overline{\theta}) &\equiv_\omega \gamma_{\{38,41,42,48\}}v_{46} + \gamma_{21}v_{47} + \gamma_{24}v_{48} + \gamma_{29}v_{49} \equiv_\omega 0,\\
p_{(4;5)}(\overline{\theta}) &\equiv_\omega \gamma_{\{40,42,49\}}v_{46} + \gamma_{22}v_{47} + \gamma_{25}v_{48} + \gamma_{30}v_{49} \equiv_\omega 0.
\end{align*}
	
From the above relations we get
\begin{equation}\label{d61}
\gamma_j = 0,\ 1\leqslant t \leqslant 30,\ \gamma_{33}= \gamma_{43},\ \gamma_{34}= \gamma_{46},\ \gamma_{36}= \gamma_{44}.
\end{equation}

By applying the homomorphisms $p_{(1;(u,v))},\, 1 < u < v < 5$, to \eqref{dtd612} and using \eqref{d61}, we get
\begin{align*}
p_{(1;(2,3))}(\overline{\theta}) &\equiv_\omega \gamma_{37}v_{48} + \gamma_{41}v_{49} \equiv_\omega 0,\\
p_{(1;(2,4))}(\overline{\theta}) &\equiv_\omega \gamma_{\{32,34,47\}}v_{47} + \gamma_{47}v_{49}  \equiv_\omega 0,\\
p_{(1;(3,4))}(\overline{\theta}) &\equiv_\omega \gamma_{\{32,33,36,39,41,47\}}v_{47} + \gamma_{36}v_{48} \equiv_\omega 0,\\
\end{align*}
By a direct computation using these equalities we get
\begin{equation}\label{d62}
\gamma_j = 0 \mbox{ for } j \in \{32,\, 33,\, 34,\, 36,\, 37,\, 39,\, 41,\, 43,\, 44,\, 46,\, 47\},
\end{equation}

Apply the homomorphisms $p_{(2;(3,5))}$, $p_{(2;(4,5))}$ to \eqref{dtd612} using \eqref{d61}, \eqref{d62} to obtain
\begin{align*}
p_{(2;(3,5))}(\overline{\theta}) &\equiv_\omega \gamma_{\{35,38,42,45\}}v_{46} + \gamma_{\{45,48\}}v_{48} \equiv_\omega 0,\\
p_{(2;(4,5))}(\overline{\theta}) &\equiv_\omega \gamma_{\{35,40,42,45\}}v_{46} + \gamma_{\{45,49\}}v_{47} \equiv_\omega 0.
\end{align*}
From the above relations we obtain
\begin{equation}\label{d63}
 \gamma_{45} = \gamma_{48} =\gamma_{49}.
\end{equation}

Apply the homomorphisms $p_{(1;(2,5))}$, $p_{(1;(3,5))}$, $p_{(1;(4,5))}$ to \eqref{dtd612} using \eqref{d61}, \eqref{d62}, \eqref{d63} to get
\begin{align*}
p_{(1;(2,5))}(\overline{\theta}) &\equiv_\omega \gamma_{35}v_{46} + \gamma_{42}v_{49} \equiv_\omega 0,\\
p_{(1;(3,5))}(\overline{\theta}) &\equiv_\omega \gamma_{38}v_{46} + \gamma_{42}v_{48} \equiv_\omega 0,\\
p_{(1;(4,5))}(\overline{\theta}) &\equiv_\omega \gamma_{40}v_{46} + \gamma_{42}v_{47} \equiv_\omega 0,\\
\end{align*}
These equalities imply that $\gamma_t = 0$ for $1 \leqslant t \leqslant 49$. Hence, we get $\theta^* \equiv \gamma_{50}^*g_{11}$. The proposition is proved.
\end{proof}
\begin{rems}\
	
i) Consider the relation (3.13) at the end of Pages 15 of Ph\'uc \cite{p24}: 
$$\eta := \sum_{501 \leqslant j \leqslant 550}\mbox{adm}_j \equiv_\omega 0$$
with $\omega = (4,2,3)$. Here, we use the notations as defined in \cite{p24}. From the proof of Proposition \ref{mdp2}, we see that $p_{(i;I)}(\eta) \equiv_\omega 0$ for all $(i;I)\in \mathcal N_5$ if and only if $\gamma_{j} = \gamma_{531}$ for $531 \leqslant j \leqslant 550$, $j \ne 535, 540, 541, 542$ and $\gamma_{j} = 0$ for $j \in \{501,502,\ldots,530, 535, 540, 541, 542\}$.
However, in \cite{p24}, the author says that from $p_{(u;v)}(\eta) \equiv_\omega 0$, $1\leqslant u < v \leqslant 5$, $p_{(1;(2,4))}(\eta) \equiv_\omega 0$ and $p_{(1;(2,5))}(\eta) \equiv_\omega 0$ he obtains $\gamma_{j} = 0$ for all $501\leqslant  j \leqslant 550$. Thus, the computations for $p_{(i;I)}(\eta)$ on Page 16 of \cite{p24} are false.

ii) It is not difficult to show that $[g_{11}]_\omega$ is an $GL_5$-invariant, hence we obtain $\dim QP_5(\omega)^{GL_5} \geqslant 1$. However, in \cite{p24}, the author stated that $\dim QP_5(\omega)^{GL_5} = 0$. Thus, the computations for $QP_5(\omega)^{GL_5}$ on Pages 16, 17 of \cite{p24} are false. 
\end{rems}

By similar computations as presented on the proof of Proposition \ref{mdsp1} we get the following.
\begin{props} We have $\widetilde{SF}_5((4)|^2|(2)) = 0.$
\end{props}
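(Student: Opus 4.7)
The plan is to follow the template established in the proof of Proposition~\ref{mdsp1}. First, one reads off from Subsection~\ref{s50} a basis $\{[c_t]_\omega : 1 \leqslant t \leqslant N\}$ of the subspace $QP_5^+(\omega)$, where $\omega = (4)|^2|(2)$. Any class $\eta \in \widetilde{SF}_5(\omega) \subset QP_5^+(\omega)$ therefore admits a unique expression
\[
\eta \equiv_\omega \sum_{1 \leqslant t \leqslant N} \gamma_t\, c_t, \qquad \gamma_t \in \mathbb F_2,
\]
and the defining condition $\eta \in \widetilde{SF}_5(\omega)$ is that $p_{(i;I)}(\eta) \equiv_\omega 0$ for every $(i;I) \in \mathcal N_5$.

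The main step is to apply the homomorphisms $p_{(i;I)}$ in the standard order used above: first the pairwise projections $p_{(u;v)}$ for $1 \leqslant u < v \leqslant 5$, and subsequently the two-index operators $p_{(u;(v,w))}$ for the appropriate triples. At each stage, the vanishing of $p_{(i;I)}(\eta)$ modulo $\omega$, expressed in the admissible basis of the relevant target space, produces an explicit system of linear equations over $\mathbb F_2$ in the $\gamma_t$'s. These are solved in cascading fashion, each new batch of equations pinning down further coefficients, exactly as in the derivations of~\eqref{c61}--\eqref{c64}.

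In contrast to Propositions~\ref{mdsp1} and~\ref{mdp2}, there is no nontrivial invariant polynomial whose leading monomial one must subtract first, so the cascade should terminate with $\gamma_t = 0$ for every $t$, yielding $\eta \equiv_\omega 0$ and hence $\widetilde{SF}_5((4)|^2|(2)) = 0$.

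The main obstacle here is not conceptual but one of bookkeeping: one must tabulate, for each generator $c_t$ and each admissible $(i;I)$, the image $p_{(i;I)}(c_t)$ expressed in the admissible basis of the target, and then track through the resulting linear system the order in which the $\gamma_t$'s are forced to vanish. Provided the enumeration of admissible monomials for the weight vector $(4)|^2|(2)$ in Subsection~\ref{s50} is accurate, the computation is entirely mechanical and formally parallel to that of Proposition~\ref{mdsp1}, which is why the author elects to invoke it by analogy rather than reproduce the tables.
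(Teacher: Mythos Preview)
Your proposal is correct and matches the paper's approach exactly: the paper does not give a detailed proof of this proposition either, stating only that it follows ``by similar computations as presented on the proof of Proposition~\ref{mdsp1}.'' Your outline of the cascade---expanding $\eta$ in the basis $\{[c_t]_\omega : 1 \leqslant t \leqslant 91\}$ of $QP_5^+((4)|^2|(2))$, applying the $p_{(i;I)}$ in the standard order, and solving the resulting linear system until all $\gamma_t$ are forced to zero---is precisely what the author has in mind.
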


Recall that $V_k \cong \langle x_1, x_2,\ldots, x_k\rangle \subset P_k$. For $1 \leqslant j \leqslant k$. Define the $\mathbb F_2$-linear map $\rho_j:V_k \to V_k$, by $\rho_k(x_1) = x_1+x_2$,  $\rho_k(x_t) = x_t$ for $t > 1$ and $\rho_j(x_j) = x_{j+1}, \rho_j(x_{j+1}) = x_j$, $\rho_j(x_t) = x_t$ for $t \ne j, j+1,\ 1 \leqslant j < k$.  The group $GL_k\cong GL(V_k)$ is generated by $\rho_j,\ 1\leqslant j \leqslant k$, and the subgroup $\Sigma_k$ is generated by $\rho_j,\ 1 \leqslant j < k$. The linear map $\rho_j$ induces an $\mathcal A$-homomorphism of algebras $\rho_j: P_k \to P_k$. Hence, a class $[g]_\omega \in QP_k(\omega)$ is an element of $QP_k^{GL_k}$ if and only if $\rho_j(g) \equiv_\omega g$ for $1 \leqslant j\leqslant k$.  It is an element of $QP_k(\omega)^{\Sigma_k}$ if and only if $\rho_j(g) \equiv_\omega g$ for $1 \leqslant j < k$.

\subsection{Computation of $QP_5((4)|(2)|(1)|^2)^{GL_5}$}\

\smallskip
In this subsection we prove the following.

\begin{props}\label{mdt1} We have 
$QP_5((4)|(2)|(1)|^2)^{GL_5} = 0.$
\end{props}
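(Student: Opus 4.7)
The plan is to use that $GL_5$ is generated by $\Sigma_5$ together with the transvection $\rho_5$, so $[f] \in QP_5(\omega)^{GL_5}$ if and only if $f$ is $\Sigma_5$-invariant and $\rho_5(f) \equiv_\omega f$. Thus I would first assemble a spanning set of $QP_5((4)|(2)|(1)|^2)^{\Sigma_5}$ and then impose $\rho_5$-invariance. For the positive part $QP_5^+((4)|(2)|(1)|^2)$, Proposition \ref{mdsp1} gives $\widetilde{SF}_5((4)|(2)|(1)|^2) = \langle [g_u] : 1 \leqslant u \leqslant 10 \rangle$, which contains the $\Sigma_5$-invariants of this subspace. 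For the complementary ``spike'' part generated by admissible monomials not divisible by $x_1 x_2 x_3 x_4 x_5$, I would enumerate the $\Sigma_5$-orbits from the admissible basis in Subsection \ref{s50} and take their orbit sums $s_1, \ldots, s_N$ as the remaining $\Sigma_5$-invariants, giving a parametrization $f \equiv \sum_i \beta_i s_i + \sum_{u=1}^{10} \gamma_u g_u$ of a generic $\Sigma_5$-invariant class.

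Next I would compute $\rho_5(f) + f$ by substituting $x_1 \mapsto x_1 + x_2$, expanding, and reducing the result to the admissible basis at weight vector $(4)|(2)|(1)|^2$ via the hit relations already implicit in Subsection \ref{s50}. The condition $\rho_5(f) \equiv_\omega f$ then becomes a homogeneous linear system in the $\beta_i$ and $\gamma_u$, whose coefficients are obtained by equating to zero the coefficient of each admissible basis monomial, in direct analogy with the $p_{(i;I)}$-bookkeeping carried out in the proofs of Propositions \ref{mdsp1} and \ref{mdp2}. The argument concludes by showing that this system has only the trivial solution, so that no nonzero $\Sigma_5$-invariant is additionally $\rho_5$-invariant.

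The main obstacle is the bookkeeping in the admissible reduction of $\rho_5(f)$: each of the ten generators $g_u$ in Subsection \ref{s51} is a sum of many monomials, and the substitution $x_1 \mapsto x_1 + x_2$ expands every such monomial into several new ones that must be individually rewritten modulo the Steenrod action without changing the weight vector. A natural strategy is to first isolate the coefficient in $\rho_5(f) + f$ of each leading monomial $a_{u+215}$ of $g_u$, which should force the $\gamma_u$ to vanish one at a time; the remaining equations coming from coefficients of spike-type monomials then kill the $\beta_i$, yielding $QP_5((4)|(2)|(1)|^2)^{GL_5} = 0$.
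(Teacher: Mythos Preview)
Your high-level strategy matches the paper's: parametrize the $\Sigma_5$-invariants, then impose $\rho_5$-invariance. The paper does exactly this, via Lemma~\ref{bdt1}, which produces an explicit basis $[h_1],\ldots,[h_7]$ of $QP_5((4)|(2)|(1)|^2)^{\Sigma_5}$; then a single computation of $\rho_5(f)+f$ on $f=\sum_{s=1}^{7}\gamma_s h_s$ kills all seven coefficients.

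The gap in your plan is the first step. You propose to use Proposition~\ref{mdsp1}, which computes $\widetilde{SF}_5((4)|(2)|(1)|^2)=\langle [g_u]:1\leqslant u\leqslant 10\rangle$, and you assert that this space ``contains the $\Sigma_5$-invariants'' of $QP_5^+((4)|(2)|(1)|^2)$. That containment is neither stated nor proved anywhere in the paper, and the two spaces are genuinely different objects: $\widetilde{SF}_5$ is cut out by the conditions $p_{(i;I)}(\cdot)\equiv 0$ for $(i;I)\in\mathcal N_5$, whereas $\Sigma_5$-invariance is the condition $\rho_j(\cdot)\equiv(\cdot)$ for $1\leqslant j\leqslant 4$. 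Concretely, the $g_u$ are \emph{not} individually $\Sigma_5$-invariant (for instance, $\rho_3$ swaps $x_3$ and $x_4$, and $\rho_3(g_1)$ contains the term $x_1x_2x_3^{3}x_4x_5^{14}$, which lies in $g_2$, not $g_1$). Conversely, the paper's computation in Lemma~\ref{bdt1} shows that the $\Sigma_5$-invariants inside $QP_5^+((4)|(2)|(1)|^2)$ form a $3$-dimensional space spanned by $[h_5],[h_6],[h_7]$, and these polynomials do not lie in the span of the $g_u$. So your parametrization $f\equiv\sum_i\beta_i s_i+\sum_u\gamma_u g_u$ is not a parametrization of the $\Sigma_5$-invariants at all, and the subsequent $\rho_5$-step is being applied to the wrong space.

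A second, smaller issue: for the $B_5^0$ part you plan to take $\Sigma_5$-orbit sums of admissible monomials as the invariants. This is not automatic in $QP_5$, because the $\rho_j$ need not permute the admissible basis on the nose; hit relations can intervene. The paper handles this by decomposing $QP_5((4)|(2)|(1)|^2)$ into explicit cyclic $\Sigma_5$-submodules $[\Sigma_5(v_1)],[\Sigma_5(v_2)],[\Sigma_5(v_5,v_6,v_7)],[\Sigma_5(v_{23})],[\Sigma_5(a_1)],\mathcal M$ and computing the invariants in each summand directly from the relations $\rho_j(h)+h\equiv 0$. That is the step you are missing; once you have the correct seven $h_s$, the $\rho_5$-check is short.
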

To compute $QP_5((4)|(2)|(1)|^2)^{GL_5}$ we need the following.
\begin{lems}\label{bdt1} We have 
$$QP_5((4)|(2)|(1)|^2)^{\Sigma_5} = \langle [h_s]: 1 \leqslant s \leqslant 7\rangle, $$
where the polynomials $h_s$ are determined as in Subsection \ref{s52}.
\end{lems}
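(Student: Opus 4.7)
The plan is to compute $QP_5((4)|(2)|(1)|^2)^{\Sigma_5}$ by taking a generic invariant class, imposing the four generating symmetry relations $\rho_j(f) \equiv_\omega f$ for $1 \leqslant j \leqslant 4$, and solving the resulting homogeneous $\mathbb F_2$-linear system. First, write an arbitrary class as
\[
[f] \equiv_\omega \sum_{1\leqslant t \leqslant 225} \gamma_t [a_t] + \sum_{t} \delta_t [c_t],
\]
where $\{[a_t]\}$ is the admissible basis of $QP_5^+((4)|(2)|(1)|^2)$ from Subsection \ref{s50} and $\{[c_t]\}$ is the admissible basis coming from those monomials with some $x_i=0$ (which sit in the image of the natural $\Sigma_4$-equivariant inclusions $P_4 \hookrightarrow P_5$).

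For each transposition $\rho_j = (j,j+1)$ with $1 \leqslant j \leqslant 4$, I would compute $\rho_j(a_t)$ and $\rho_j(c_t)$, then rewrite each image in terms of the admissible basis by applying the standard hit reductions (Cartan formula, Kameko's $Sq^0$ argument, and Singer's criterion, as in \cite[Section 2]{suw}). Setting $\rho_j(f) + f \equiv_\omega 0$ then produces a family of linear equations in the coefficients $\gamma_t, \delta_t$. A significant simplification is that the subgroup $\langle \rho_1,\rho_2,\rho_3\rangle \cong \Sigma_4$ fixes $x_5$, so its action groups large blocks of admissibles into $\Sigma_4$-orbits and reduces the problem to finding the $\Sigma_4$-invariants first; the last transposition $\rho_4$ (swapping $x_4$ and $x_5$) then cuts the $\Sigma_4$-invariant space down to a $7$-dimensional solution space.

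Finally, I would verify that each of the polynomials $h_1,\ldots,h_7$ listed in Subsection \ref{s52} is $\Sigma_5$-invariant by a direct check that $\rho_j(h_s) \equiv_\omega h_s$ for $s=1,\ldots,7$ and $j=1,2,3,4$ (this only requires reducing a small number of monomials to admissible form, since the $h_s$ are already chosen to be symmetric sums). The seven classes $[h_s]$ are linearly independent in $QP_5((4)|(2)|(1)|^2)$ because their leading admissible monomials are distinct, so they span a $7$-dimensional subspace of the invariants. Combined with the upper bound of $7$ coming from the linear algebra above, this forces equality.

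The main obstacle is purely computational: the admissible basis has over two hundred monomials in the positive part, and each $\rho_j$-image must be explicitly reduced to admissible form via the hit relations before the resulting linear system can be solved. To keep this tractable I would organize the computation by $\omega$-block, exploit the $\Sigma_4$-symmetry to collapse orbits, and defer the $\rho_4$-condition to the final step where it produces the bulk of the nontrivial relations, mirroring the strategy already used in Propositions \ref{mdsp1} and \ref{mdp2}.
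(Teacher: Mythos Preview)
Your outline is correct in principle, but it organizes the computation differently from the paper. The paper does not work with the full $225$-dimensional positive part all at once, nor does it reduce by $\Sigma_4$ first and then impose $\rho_4$; instead it finds a direct-sum decomposition of $QP_5((4)|(2)|(1)|^2)$ into six $\Sigma_5$-submodules, namely the cyclic modules $[\Sigma_5(v_1)]$, $[\Sigma_5(v_2)]$, $[\Sigma_5(v_5,v_6,v_7)]$, $[\Sigma_5(v_{23})]$, $[\Sigma_5(a_1)]$, and the residual piece $\mathcal M = \langle [a_t] : 16\leqslant t\leqslant 225\rangle$, and then computes $\Sigma_5$-invariants on each summand separately (for instance, $[\Sigma_5(a_1)]$ has only $15$ basis elements, and applying all four $\rho_j$ simultaneously forces every coefficient to vanish). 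This orbit-by-orbit strategy keeps each linear system small and makes the provenance of each $h_s$ transparent, whereas your $\Sigma_4$-first filtration would require solving a much larger initial system before the final $\rho_4$ cut. One minor caution: the strategy you cite from Propositions \ref{mdsp1} and \ref{mdp2} uses the projection homomorphisms $p_{(i;I)}$ to $P_4$, which are the right tool for $\widetilde{SF}_5$ but not for $\Sigma_5$-invariants; here the relevant operators are the $\rho_j$ themselves, as the paper uses.
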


\begin{proof}
We have $B_5((4)|(2)|(1)|^2)= B_5^0((4)|(2)|(1)|^2)\cup\{a_t:1\leqslant t \leqslant 225\}$, where the monomials $a_t$ are determined as in Subsection \ref{s50}. By using this set we see that there is a direct summand decompositions of the $\Sigma_5$-modules
\begin{align*} 
QP_5((4)|(2)|(1)|^2) [\Sigma_5(v_1)]\bigoplus&[\Sigma_5(v_2)]\bigoplus[\Sigma_5(v_5,v_6,v_7)]\\
&\bigoplus[\Sigma_5(v_{23})]\bigoplus[\Sigma_5(a_1)]\bigoplus \mathcal M,
\end{align*}
where $v_s$, $a_t$ are defined as in Subsection \ref{s50} and $\mathcal M = \langle [a_t]: 16 \leqslant t \leqslant 225\rangle$. By a direct computation, we have 
\begin{align*}&[\Sigma_5(v_1)]^{\Sigma_5} = \langle [h_1]\rangle,\ \ [\Sigma_5(v_2)]^{\Sigma_5} = \langle [h_2]\rangle,\\
&[\Sigma_5(v_5,v_6,v_7)]^{\Sigma_5} = \langle [h_3]\rangle,\ \ [\Sigma_5(w_{23})]^{\Sigma_5} = \langle [h_4]\rangle,\\
&[\Sigma_5(a_1)]^{\Sigma_5} = 0,\ \ \mathcal M^{\Sigma_5} = \langle [h_5],[h_6],[h_7]\rangle.
\end{align*}
To illustrate the proofs, we present a proof for the relation $[\Sigma_5(a_1)]^{\Sigma_5} = 0$. The proofs of other cases are carried out by a similar computation.

Note that $[\Sigma_5(a_1)] = \langle [a_t]: 1 \leqslant t \leqslant 15\rangle$. Suppose $h \in P_5((4)|(2)|(1)|^2)$ and $[h] \in \mathcal [\Sigma_5(a_1)]^{\Sigma_5}$. Then we have
$h \equiv \sum_{t=1}^{15}\gamma_ta_t$. Cnssider the homomorphisms $\rho_j: P_k \to P_k$ as defined in \cite[Page 46]{suw} with $1 \leqslant j \leqslant 5$. A direct computation shows
\begin{align*}
\rho_1(h)+h &\equiv \gamma_{7}a_{1} + \gamma_{8}a_{2} + \gamma_{7}a_{3} + \gamma_{8}a_{4} + \gamma_{9}a_{5} + \gamma_{9}a_{6} + \gamma_{\{10,13\}}a_{10}\\ 
&\quad + \gamma_{\{11,14\}}a_{11} + \gamma_{\{12,15\}}a_{12} + \gamma_{\{10,13\}}a_{13} + \gamma_{\{11,14\}}a_{14}\\ 
&\quad + \gamma_{\{12,15\}}a_{15} \equiv 0,\\ 
\rho_2(h)+h &\equiv \gamma_{\{3,7\}}a_{3} + \gamma_{\{4,8\}}a_{4} + \gamma_{\{5,10\}}a_{5} + \gamma_{\{6,11\}}a_{6} + \gamma_{\{3,7\}}a_{7}\\ 
&\quad + \gamma_{\{4,8\}}a_{8} + \gamma_{\{9,12\}}a_{9} + \gamma_{\{5,10\}}a_{10} + \gamma_{\{6,11\}}a_{11} + \gamma_{\{9,12\}}a_{12}\\ 
&\quad + \gamma_{15}a_{13} + \gamma_{15}a_{14} \equiv 0,\\
\rho_3(h)+h &\equiv \gamma_{\{1,3\}}a_{1} + \gamma_{\{2,5\}}a_{2} + \gamma_{\{1,3\}}a_{3} + \gamma_{\{4,6\}}a_{4} + \gamma_{\{2,5\}}a_{5}\\ 
&\quad + \gamma_{\{4,6\}}a_{6} + \gamma_{\{8,9\}}a_{8} + \gamma_{\{8,9\}}a_{9} + \gamma_{\{11,12\}}a_{11} + \gamma_{\{11,12\}}a_{12}\\ 
&\quad + \gamma_{\{14,15\}}a_{14} + \gamma_{\{14,15\}}a_{15} \equiv 0,\\
\rho_4(h)+h &\equiv \gamma_{\{1,2\}}a_{1} + \gamma_{\{1,2\}}a_{2} + \gamma_{\{3,4\}}a_{3} + \gamma_{\{3,4\}}a_{4} + \gamma_{\{5,6\}}a_{5}\\ 
&\quad + \gamma_{\{5,6\}}a_{6} + \gamma_{\{7,8\}}a_{7} + \gamma_{\{7,8\}}a_{8} + \gamma_{\{10,11\}}a_{10} + \gamma_{\{10,11\}}a_{11}\\ 
&\quad + \gamma_{\{13,14\}}a_{13} + \gamma_{\{13,14\}}a_{14} \equiv 0.
\end{align*}
By computing from the above relations, we get $\gamma_t = 0$ for $1 \leqslant t \leqslant 15$. Hence, $h \equiv 0$ and $[\Sigma_5(a_1)]^{\Sigma_5} = 0$.
\end{proof}
\begin{proof}[Proof of Proposition \ref{mdt1}] Suppose $f \in QP_5((4)|(2)|(1)|^2)$ such that the class $[f] \in P_5((4)|(2)|(1)|^2)^{GL_5}$. Then, $[f] \in P_5((4)|(2)|(1)|^2)^{\Sigma_5}$. By Lemma \ref{bdt1}, we have 
$$ f \equiv \sum_{1 \leqslant s \leqslant 7}\gamma_sh_s,$$
where $\gamma_s \in \mathbb F_2$. By computing $\rho_5(f)+f$ in terms of the admissible monomials, we get
\begin{align*}
\rho_5(f)+f &\equiv \gamma_1a_1 + \gamma_2a_{125}+ \gamma_3a_{16} + \gamma_4a_{26} + \gamma_5a_{121}\\
&\quad + (\gamma_2 + \gamma_6)a_{61} + \gamma_7a_{63} + \mbox{ other terms } \equiv 0.
\end{align*}
This equality implies $\gamma_s = 0$ for $1 \leqslant s \leqslant 7$. Hence, $[f] = 0$. The proposition is proved.
\end{proof}

\subsection{Computation of $QP_5((4)|(2)|(3))^{GL_5}$}\

\smallskip
In this subsection we prove the following.

\begin{props}\label{mdt2} Set $\overline{\omega} = (4)|(2)|(3)$. Then, we have 
$$QP_5(\overline{\omega})^{GL_5} = \langle [h_{10}]_{\overline{\omega}}\rangle.$$
\end{props}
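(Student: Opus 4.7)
The plan is to follow exactly the two-stage pattern already used for Proposition \ref{mdt1}: first compute the $\Sigma_5$-invariants $QP_5(\overline{\omega})^{\Sigma_5}$, then cut down to the $GL_5$-invariants by imposing the remaining generator $\rho_5$.

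First, I would isolate the $\Sigma_5$-orbit decomposition of the admissible basis of $QP_5^+(\overline{\omega})$. By Proposition \ref{mdp2} together with Subsection \ref{s50}, this basis consists of 50 classes $\{[b_t]_{\overline{\omega}}: 1 \leqslant t \leqslant 50\}$, and one also adjoins any classes $B_5^0(\overline{\omega})$ coming from strictly smaller spikes; grouping these into $\Sigma_5$-orbits produces a decomposition
\[
QP_5(\overline{\omega}) \;=\; \bigoplus_{r} [\Sigma_5(v_{i_r})],
\]
where the $v_{i_r}$ are orbit representatives. For each summand I would run the same $\rho_1,\rho_2,\rho_3,\rho_4$ computation as in Lemma \ref{bdt1}: write a general element $h = \sum \gamma_t b_t$ on that orbit, reduce $\rho_j(h)+h$ modulo $\overline{\omega}$ to an $\mathbb F_2$-linear system in the $\gamma_t$, and solve. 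This yields an explicit lemma
\[
QP_5(\overline{\omega})^{\Sigma_5} \;=\; \langle [h_8]_{\overline{\omega}}, \ldots, [h_{10}]_{\overline{\omega}}\rangle
\]
for suitable polynomials $h_s$ whose monomial support lies entirely in $B_5(\overline{\omega})$; $h_{10}$ is singled out to match the statement.

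Second, with the $\Sigma_5$-invariants in hand, any $[f]_{\overline{\omega}} \in QP_5(\overline{\omega})^{GL_5}$ can be written $f \equiv_{\overline{\omega}} \sum_s \gamma_s h_s$ with $\gamma_s \in \mathbb F_2$. Since $GL_5$ is generated by $\Sigma_5$ and $\rho_5$, invariance under $GL_5$ is equivalent to the single further condition $\rho_5(f)+f \equiv_{\overline{\omega}} 0$. I would expand $\rho_5(f)+f$, apply the hit relations for $\omega = \overline{\omega}$ to rewrite every term as a combination of the admissible monomials $b_t$, and collect coefficients. This gives a linear system in the $\gamma_s$. Comparing coefficients of carefully chosen admissible monomials (those appearing in $\rho_5(h_s)$ but not in $h_s$) should force all $\gamma_s = 0$ except the coefficient $\gamma_{10}$ of $h_{10}$, which remains free because $[h_{10}]_{\overline{\omega}}$ is directly checked to satisfy $\rho_5(h_{10}) \equiv_{\overline{\omega}} h_{10}$.

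The verification that $h_{10}$ itself is $GL_5$-invariant amounts to confirming $\rho_j(h_{10}) \equiv_{\overline{\omega}} h_{10}$ for $j=1,\dots,5$, which is a direct (if lengthy) admissible-monomial reduction using the data of Subsection \ref{s50}; this provides the lower bound $\dim QP_5(\overline{\omega})^{GL_5} \geqslant 1$ already noted in the Remark after Proposition \ref{mdp2}. The main obstacle I anticipate is purely bookkeeping: the step $\rho_5(f)+f \equiv_{\overline{\omega}} 0$ requires rewriting many monomials of the form $\rho_5(h_s)$ in the admissible basis of $(QP_5)_{20}$ via the hit relations on the weight vector $\overline{\omega} = (4)|(2)|(3)$, and ensuring that no hidden cancellation merges the rows of the resulting matrix. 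Choosing as pivot monomials those $b_t$ whose coefficient in $\rho_5(h_s)$ has a unique contributor will, as in Proposition \ref{mdt1}, reduce the system to triangular form and finish the proof.
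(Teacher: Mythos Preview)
Your proposal is correct and follows essentially the same approach as the paper: first establish $QP_5(\overline{\omega})^{\Sigma_5} = \langle [h_8]_{\overline{\omega}}, [h_9]_{\overline{\omega}}, [h_{10}]_{\overline{\omega}}\rangle$ via the $\Sigma_5$-orbit decomposition $QP_5(\overline{\omega}) = [\Sigma_5(v_{46})]_{\overline{\omega}}\oplus[\Sigma_5(b_1)]_{\overline{\omega}}\oplus[\Sigma_5(b_{31})]_{\overline{\omega}}$ and the relations $\rho_j(h)+h\equiv_{\overline{\omega}}0$ for $j<5$, then impose $\rho_5(g)+g\equiv_{\overline{\omega}}0$ on $g=\gamma_8h_8+\gamma_9h_9+\gamma_{10}h_{10}$ to force $\gamma_8=\gamma_9=0$. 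The paper carries out exactly this computation, displaying the explicit expansion of $\rho_5(g)+g$ in admissible monomials to read off the vanishing of $\gamma_8$ and $\gamma_9$.
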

Note that $h_{10} = g_{11}$. We need the following for the proof of Proposition \ref{mdt2}.

\begin{lems}\label{bdt2} We have 
$$QP_5((4)|(2)|(1)|^2)^{\Sigma_5} = \langle [h_8]_{\overline{\omega}}, [h_9]_{\overline{\omega}}, [h_{10}]_{\overline{\omega}}\rangle. $$
\end{lems}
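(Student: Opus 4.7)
The plan is to follow the template of the proof of Lemma \ref{bdt1}. First, I would use the basis of $QP_5(\overline{\omega})$ coming from Proposition \ref{mdp2} together with the data from Subsection \ref{s50}: the $50$ admissible monomials $b_t$ spanning $QP_5^+(\overline{\omega})$ and the admissible monomials in the part $QP_5^0(\overline{\omega})$ involving strictly fewer than five variables. I would then organize these basis elements into $\Sigma_5$-orbits, producing a direct sum decomposition
\begin{equation*}
QP_5(\overline{\omega}) = \bigoplus_{r}[\Sigma_5(v_r)]
\end{equation*}
of $\Sigma_5$-modules for suitable orbit representatives $v_r$, so that $QP_5(\overline{\omega})^{\Sigma_5}$ splits correspondingly as $\bigoplus_r [\Sigma_5(v_r)]^{\Sigma_5}$.

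Second, on each summand $[\Sigma_5(v_r)]$ I would write a general element $f \equiv_{\overline{\omega}} \sum_t \gamma_t b_t$ with unknown coefficients $\gamma_t \in \mathbb F_2$ and apply the four generators $\rho_1,\rho_2,\rho_3,\rho_4$ of $\Sigma_5$, imposing $\rho_j(f)+f \equiv_{\overline{\omega}} 0$ for $1\leqslant j \leqslant 4$. Reducing the resulting polynomials to the admissible basis via the hit relations of Subsection \ref{s50} converts each such equation into a linear system in the $\gamma_t$, and the solution space describes the $\Sigma_5$-invariants on that summand. Small orbits typically contribute no invariants (as happens for $[\Sigma_5(a_1)]$ in Lemma \ref{bdt1}), and only a handful of larger orbits contribute the generators $h_8, h_9, h_{10}$.

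Third, to finish I would verify the two halves of the identification separately. For the inclusion ``$\supseteq$'', a direct check of $\rho_j(h_s) \equiv_{\overline{\omega}} h_s$ for $s=8,9,10$ and $j=1,2,3,4$ shows that each class $[h_s]_{\overline{\omega}}$ lies in $QP_5(\overline{\omega})^{\Sigma_5}$; linear independence of these three classes follows by identifying their distinct leading admissible monomials in the ordered basis $\{b_t\}$. For the reverse inclusion, one sums the dimensions obtained in the per-orbit systems and checks the total equals three, forcing $h_8,h_9,h_{10}$ to span the whole invariant space.

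The main obstacle will be the sheer bookkeeping: several orbits have size close to $|\Sigma_5|=120$, and the relations modulo $\overline{\omega}$ must be applied to each $\rho_j(f)+f$ before the linear system can even be read off. I expect no conceptual difficulty beyond what already appeared in the proofs of Propositions \ref{mdsp1} and \ref{mdp2}; the decomposition into orbits is exactly what keeps the linear-algebra blocks of manageable size, and the bookkeeping can be arranged along the same lines as in the proof of Lemma \ref{bdt1}.
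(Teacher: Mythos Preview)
Your plan is essentially the paper's own argument: decompose $QP_5(\overline{\omega})$ into $\Sigma_5$-submodules generated by orbit representatives, then on each summand impose $\rho_j(f)+f\equiv_{\overline{\omega}}0$ for $1\leqslant j\leqslant 4$ and solve the resulting linear system in the admissible basis. The paper carries this out with exactly three summands, $[\Sigma_5(v_{46})]_{\overline{\omega}}\oplus[\Sigma_5(b_1)]_{\overline{\omega}}\oplus[\Sigma_5(b_{31})]_{\overline{\omega}}$, each contributing one invariant ($h_8,h_9,h_{10}$ respectively), so your expectation that ``small orbits typically contribute no invariants'' and that orbits approach size $120$ is off here---the whole space has dimension $70$---but this does not affect the method.
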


\begin{proof}
We have $B_5(\overline{\omega})= B_5^0(\overline{\omega})\cup\{b_t:1\leqslant t \leqslant 50\}$, where the monomials $b_t$ are determined as in Subsection \ref{s50}. By using this set we see that there is a direct summand decompositions of the $\Sigma_5$-modules
\begin{align*} 
QP_5(\overline{\omega}) = [\Sigma_5(v_{46})]_{\overline{\omega}}\bigoplus&[\Sigma_5(b_1)]_{\overline{\omega}}\bigoplus[\Sigma_5(b_{31})]_{\overline{\omega}},
\end{align*}

where $v_{s}$, $b_t$ are defined as in Subsection \ref{s50}. By a direct computation, we have  
\begin{align*}&[\Sigma_5(v_{46})]^{\Sigma_5} = \langle [h_8]_{\overline{\omega}}\rangle,\ \ [\Sigma_5(b_1)]^{\Sigma_5} = \langle [h_9]_{\overline{\omega}}\rangle, \ \ [\Sigma_5(b_{31})]^{\Sigma_5} = \langle [h_{10}]_{\overline{\omega}}\rangle.
\end{align*}
We present a detailed proof for the case $[\Sigma_5(b_{31})]^{\Sigma_5} = \langle [h_{10}]_{\overline{\omega}}\rangle$. 
	
Note that $[\Sigma_5(b_{31})] = \langle [b_t]: 31 \leqslant t \leqslant 50\rangle$. Suppose $h \in P_5(\overline{\omega})$ and $[h]_{\overline{\omega}} \in \mathcal [\Sigma_5(b_{31})]_{\overline{\omega}}^{\Sigma_5}$. Then we have
$h \equiv_{\overline{\omega}} \sum_{t=31}^{50}\gamma_tb_t$. By a direct computation we have 
\begin{align*}
\rho_1(h)+h &\equiv_{\overline{\omega}} \gamma_{\{31,35\}}b_{31} + \gamma_{\{32,36\}}b_{32} + \gamma_{\{33,37,46\}}b_{33} + \gamma_{\{34,46\}}b_{34}\\ &\quad + \gamma_{\{31,35\}}b_{35} + \gamma_{\{32,36\}}b_{36} + \gamma_{\{33,34,37\}}b_{37} + \gamma_{41}b_{43}\\ &\quad + \gamma_{\{47,50\}}b_{44} + \gamma_{\{42,48,49\}}b_{45} + \gamma_{\{34,46\}}b_{46} +  \equiv_{\overline{\omega}} 0,\\ 
\rho_2(h)+h &\equiv_{\overline{\omega}} \gamma_{\{33,34\}}b_{33} + \gamma_{\{33,34\}}b_{34} + \gamma_{\{35,38,41\}}b_{35} + \gamma_{\{36,39,41\}}b_{36}\\ &\quad + \gamma_{\{37,41\}}b_{37} + \gamma_{\{35,37,38\}}b_{38} + \gamma_{\{36,37,39\}}b_{39} + \gamma_{\{37,41\}}b_{41}\\ &\quad + \gamma_{\{43,46\}}b_{43} + \gamma_{\{44,47\}}b_{44} + \gamma_{\{45,48\}}b_{45} + \gamma_{\{43,46\}}b_{46}\\ &\quad + \gamma_{\{44,47\}}b_{47} + \gamma_{\{45,48\}}b_{48} \equiv_{\overline{\omega}} 0,\\
\rho_3(h)+h &\equiv_{\overline{\omega}} \gamma_{\{32,33\}}b_{32} + \gamma_{\{32,33\}}b_{33} + \gamma_{\{36,37,46\}}b_{36} + \gamma_{\{36,37,46\}}b_{37}\\ &\quad + \gamma_{\{38,40,47\}}b_{38} + \gamma_{\{39,50\}}b_{39} + \gamma_{\{38,39,40,41\}}b_{40} + \gamma_{\{41,47,50\}}b_{41}\\ &\quad + \gamma_{\{43,44,47,50\}}b_{43} + \gamma_{\{41,43,44\}}b_{44} + \gamma_{\{39,41,47\}}b_{47} + \gamma_{\{48,49\}}b_{48}\\ &\quad + \gamma_{\{48,49\}}b_{49} + \gamma_{\{39,50\}}b_{50} \equiv_{\overline{\omega}} 0,\\
\rho_4(h)+h &\equiv_{\overline{\omega}} \gamma_{\{31,32\}}b_{31} + \gamma_{\{31,32\}}b_{32} + \gamma_{\{35,36\}}b_{35} + \gamma_{\{35,36\}}b_{36}\\ &\quad + \gamma_{\{38,39\}}b_{38} + \gamma_{\{38,39\}}b_{39} + \gamma_{\{40,42\}}b_{40} + \gamma_{\{40,42\}}b_{42}\\ &\quad + \gamma_{\{42,44,45\}}b_{44} + \gamma_{\{40,44,45\}}b_{45} + \gamma_{\{42,47,48\}}b_{47} + \gamma_{\{40,47,48\}}b_{48}\\ &\quad + \gamma_{\{49,50\}}b_{49} + \gamma_{\{49,50\}}b_{50} \equiv_{\overline{\omega}} 0.
\end{align*}
Computing from the above relations gives $\gamma_t = 0$ for $t\in \{37,\, 40,\, 41,\, 42\}$ and $\gamma_t = \gamma_{31}$ for $t \ne 37,\, 40,\, 41,\, 42$. Hence, $h \equiv_{\overline{\omega}} h_{10}$. The lemma is proved.
\end{proof}
\begin{proof}[Proof of Proposition \ref{mdt2}] Suppose $g \in P_5(\overline{\omega})$ and $[g]_{\overline{\omega}} \in QP_5(\overline{\omega})^{GL_5}$. Then, $[g]_{\overline{\omega}} \in QP_5(\overline{\omega})^{\Sigma_5}$. By Lemma \ref{bdt2}, we have 
$$ g \equiv \gamma_{8}h_8 + \gamma_{9}h_9 + \gamma_{10}h_{10},$$
where $\gamma_s \in \mathbb F_2$. By computing $\rho_5(g)+g$ in terms of the admissible monomials, we get
\begin{align*}
\rho_5(g)+g &\equiv_{\overline{\omega}}  (\gamma_8+\gamma_9)(x_2^3x_3^5x_4^5x_5^7 + x_2^3x_3^5x_4^7x_5^5 + x_2^3x_3^7x_4^5x_5^5)\\ 
&\quad \gamma_8\big(x_2^{7}x_3^{3}x_4^{5}x_5^{5} + x_1^{3}x_2^{7}x_4^{5}x_5^{5} + x_1^{3}x_2^{7}x_3^{5}x_5^{5} + x_1^{3}x_2^{7}x_3^{5}x_4^{5}\\ &\quad + b_{31} + b_{32} + b_{33} + b_{34} + b_{35} + b_{36} + b_{46}\big)\\
&\quad +\gamma_9\big(b_{4} + b_{6} + b_{7} + b_{8} + b_{9} + b_{10} + b_{11} + b_{12} + b_{13}\\ &\quad + b_{23} + b_{24} + b_{25} + b_{43} + b_{44} + b_{45}\big)
 \equiv_{\overline{\omega}} 0.
\end{align*}
This equality implies $\gamma_8 = \gamma_9 = 0$. Hence, $[g]_{\overline{\omega}} = [h_{10}]_{\overline{\omega}}$. The proposition is proved.
\end{proof}

\subsection{Computation of $QP_5((4)|^{2}|(2))^{GL_5}$}\

\smallskip
In this subsection we prove the following.

\begin{props}\label{mdt3} We have 
$QP_5((4)|^2|(2))^{GL_5} = \langle 0\rangle.$
\end{props}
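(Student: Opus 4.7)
My plan is to follow the same two-step scheme as in the proofs of Propositions \ref{mdt1} and \ref{mdt2}: first compute the $\Sigma_5$-invariants of $QP_5((4)|^2|(2))$ via a preliminary lemma, and then apply the relation $\rho_5(f) + f \equiv 0$ to cut down to the full $GL_5$-invariants. The admissible basis $B_5((4)|^2|(2)) = B_5^0((4)|^2|(2)) \cup \{c_t : 1 \leqslant t \leqslant 121\}$ from \cite{suz} is assumed available in Subsection \ref{s50}.

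In Step 1, I would decompose $QP_5((4)|^2|(2))$ as a direct sum of $\Sigma_5$-submodules generated by the $\Sigma_5$-orbits of a chosen set of admissible monomial representatives, exactly as in Lemmas \ref{bdt1} and \ref{bdt2}. For each orbit, I write a general element $f \equiv \sum_t \gamma_t c_t$ supported on that orbit and impose the four relations $\rho_j(f) + f \equiv 0$ for $j = 1, 2, 3, 4$. Solving each orbit's linear system in $\mathbb F_2$ produces an explicit finite basis $\{[k_s] : 1 \leqslant s \leqslant N\}$ of $QP_5((4)|^2|(2))^{\Sigma_5}$. In Step 2, I take an arbitrary $\Sigma_5$-invariant $f \equiv \sum_s \gamma_s k_s$, expand $\rho_5(f) + f$ in the admissible basis, and identify, for each $s$, a specific admissible monomial whose coefficient in $\rho_5(f) + f$ isolates $\gamma_s$ (modulo previously solved $\gamma_{s'}$). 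This forces every $\gamma_s = 0$ and yields $QP_5((4)|^2|(2))^{GL_5} = 0$.

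The main obstacle is the sheer size of the stratum: with $\dim QP_5^+((4)|^2|(2)) = 121$, Step 1 requires carefully organizing a large number of $\Sigma_5$-orbits and the associated linear systems. The structure is, however, exactly parallel to the corresponding steps in Propositions \ref{mdt1} and \ref{mdt2}, each orbit being independently analyzed by a routine (if tedious) $\mathbb F_2$-linear computation, and the final $\rho_5$-step involves a system whose coefficient structure mirrors that orbit analysis. A strong consistency check is available: Theorem \ref{thm1} asserts $\dim (QP_5)_{20}^{GL_5} = 1$, and by the direct-sum decomposition $(QP_5)_{20} \cong QP_5((4)|(2)|(1)|^2) \oplus QP_5((4)|(2)|(3)) \oplus QP_5((4)|^2|(2))$ combined with the vanishing from Proposition \ref{mdt1} and the one-dimensional contribution from Proposition \ref{mdt2}, the third stratum must contribute dimension $0$, confirming the expected answer $QP_5((4)|^2|(2))^{GL_5} = 0$.
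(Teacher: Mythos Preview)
Your two-step scheme is exactly the paper's approach: it decomposes $QP_5((4)|^2|(2))$ into four $\Sigma_5$-submodules $[\Sigma_5(w_{50})]_{\omega^*}\oplus[\Sigma_5(c_1)]_{\omega^*}\oplus[\Sigma_5(c_{31})]_{\omega^*}\oplus[\Sigma_5(c_{86})]_{\omega^*}$, obtains three $\Sigma_5$-invariants $[h_{11}],[h_{12}],[h_{13}]$ (the orbit of $c_{31}$ contributes nothing), and then kills each $\gamma_s$ by reading off a distinguished admissible monomial in $\rho_5(f)+f$. One small correction to your data: $B_5^+((4)|^2|(2))=\{c_t:1\leqslant t\leqslant 91\}$, not $121$; the total dimension $121$ counts the additional $30$ monomials in $B_5^0((4)|^2|(2))$.
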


\begin{proof} This proposition is proved by the same computations as in the proof of Proposition \ref{mdt2}. We only present here a brief proof.
	
\smallskip
We have $B_5((4)|^{2}|(2))= B_5^0((4)|^{2}|(2))\cup\{c_t:1\leqslant t \leqslant 91\}$, where the monomials $c_t$ are determined as in Subsection \ref{s50}. Based on this set we have a direct summand decomposition of the $\Sigma_5$-modules
$$QP_5(\omega^*)= [\Sigma_5(w_{50})]_{\omega^*}\bigoplus[\Sigma_5(c_1)]_{\omega^*}\bigoplus[\Sigma_5(c_{31})]_{\omega^*}\bigoplus[\Sigma_5(c_{86})]_{\omega^*},$$
where $\omega^* := (4)|^{2}|(2)$. By a direct computation we have 
\begin{align*}
&[\Sigma_5(w_{50}]_{\omega^*}^{\Sigma_5} = \langle[h_{11}]_{\omega^*}\rangle,\ \ [\Sigma_5(c_1)]_{\omega^*}^{\Sigma_5} = \langle[h_{12}]_{\omega^*}\rangle,\\ &[\Sigma_5(c_{31})]_{\omega^*}^{\Sigma_5} = 0,\ \
[\Sigma_5(c_{86})]_{\omega^*}^{\Sigma_5} = \langle[h_{13}]_{\omega^*}\rangle,
\end{align*}
where $h_{11}, h_{12}, h_{13}$ are defined in Subsection \ref{s52}. 

Suppose $f \in P_5(\omega^*)$ and $[f]_{\omega^*} \in QP_5(\omega^*)^{GL_5}$. Then, $[f]_{\omega^*} \in QP_5(\omega^*)^{\Sigma_5}$. Hence, we have 
$$f \equiv_{\omega^*} \gamma_{11}h_{11} + \gamma_{12}h_{12} + \gamma_{13}h_{13},$$
where $\gamma_{11}, \gamma_{12}, \gamma_{13} \in \mathbb F_2$. By computing $\rho_5(f)+f$ in terms of the admissible monomials, we get
\begin{align*}
\rho_5(f)+f &\equiv_{\omega^*} \gamma_{11}w_{52} + \gamma_{12}c_4 + \gamma_{13}c_{40} + \mbox{ other terms } \equiv_{\omega^*} 0.
\end{align*}
This equality implies $\gamma_{11} = \gamma_{12} = \gamma_{13} = 0$. Hence, $[f]_{\omega^*} = 0$ and the proposition is proved.
\end{proof}

\begin{rems} In \cite{p24}, Ph\'uc stated on Page 16 that $\dim\mathbb V^{GL_5} = 1$ with $\mathbb V = QP_5(4,2,1,1) \bigoplus QP_5(4,4,2)$. However, by combining Propositions \ref{mdt1} and \ref{mdt3}, we have $\mathbb V^{GL_5} = 0$. Hence, Ph\'uc's result is false.  
\end{rems}

\subsection{Proof of Theorem \ref{thm1}}\

\smallskip
We note that by a direct computation we can easily check that $\rho_j(p) + p \equiv 0$ for $1 \leqslant j \leqslant 5$. Hence, $[p]\in (QP_5)_{20}^{GL_5}$.

Suppose $f \in P_5$ and $[f] \in (QP_5)_{20}^{GL_5}$. By Proposition \ref{mdt3}, $[f]_{\omega^*} = 0$, hence $[f]_{\overline{\omega}}\in QP_5(\overline{\omega})^{GL_5}$. By using Proposition \ref{mdt2}, we get 
$$f \equiv \gamma h_{10} + \sum_{x\in B((4)|(2)|(1)^2)}\gamma_x.x,$$
where $\gamma,\, \gamma_x \in \mathbb F_2$ for all $x\in B((4)|(2)|(1)^2)$. By computing from the relation $\rho_j(f)+f \equiv 0$, we see that $\rho_j(f)+f \equiv 0$ for $1 \leqslant j < 5$ if and only if 
$$ f \equiv \gamma(h_{10}+h_0) + \sum_{1 \leqslant t \leqslant 7}\gamma_th_t,$$
where $\gamma_t \in \mathbb F_2$ and
\begin{align*}
h_0 &= x_1x_2x_3^{3}x_4^{3}x_5^{12} + x_1x_2x_3^{3}x_4^{5}x_5^{10} + x_1x_2x_3^{3}x_4^{6}x_5^{9} + x_1x_2x_3^{3}x_4^{12}x_5^{3}\\ 
&\quad + x_1x_2x_3^{6}x_4^{3}x_5^{9} + x_1x_2x_3^{6}x_4^{9}x_5^{3} + x_1x_2^{3}x_3x_4^{3}x_5^{12} + x_1x_2^{3}x_3x_4^{5}x_5^{10}\\ 
&\quad  + x_1x_2^{3}x_3x_4^{6}x_5^{9} + x_1x_2^{3}x_3x_4^{12}x_5^{3}+ x_1x_2^{3}x_3^{3}x_4^{4}x_5^{9} + x_1x_2^{3}x_3^{3}x_4^{5}x_5^{8}\\ 
&\quad  + x_1x_2^{3}x_3^{5}x_4^{3}x_5^{8} + x_1x_2^{3}x_3^{5}x_4^{8}x_5^{3} + x_1x_2^{6}x_3x_4^{3}x_5^{9} + x_1x_2^{6}x_3x_4^{9}x_5^{3}\\ 
&\quad  + x_1^{3}x_2x_3^{4}x_4^{3}x_5^{9} + x_1^{3}x_2x_3^{4}x_4^{9}x_5^{3} + x_1^{3}x_2x_3^{5}x_4^{3}x_5^{8} + x_1^{3}x_2x_3^{5}x_4^{8}x_5^{3}\\ 
&\quad + x_1^{3}x_2^{4}x_3x_4^{3}x_5^{9} + x_1^{3}x_2^{4}x_3x_4^{9}x_5^{3} + x_1^{3}x_2^{5}x_3x_4^{3}x_5^{8} + x_1^{3}x_2^{5}x_3x_4^{8}x_5^{3}.
\end{align*}
Note that $h_0+h_{10} = p$. Since $[p]\in (QP_5)_{20}^{GL_5}$, we get $$[\sum_{1 \leqslant t \leqslant 7}\gamma_th_t] \in QP_5((4)|(2)|(1)^2)^{GL_5}.$$ By using Proposition \ref{mdt1}, we obtain $\gamma_t = 0$ for $1 \leqslant t \leqslant 7$. The theorem is proved.


\section{Proof of Theorem \ref{thm2}}\label{s4}
\setcounter{equation}{0}

Recall from \cite{suz} that
$$
(QP_5)_{30}	\cong QP_5((2)|^4) \bigoplus QP_5(2,4,3,1) \bigoplus QP_5(4,3,3,1).
$$ 
The set of all admissible monomials of the degree 30 in $P_5$ is explicitly presented in \cite{smo}.

The proof of this theorem is carried out by the same argument as presented in the proof of Theorem \ref{thm1}. So, we only present here some main results of the proof. 

\subsection{Computation of $QP_5((2)|^4)^{GL_5}$ and $QP_5(2,4,3,1)^{GL_5}$}

\begin{props}\label{mdt31} We have 
$QP_5((2)|^4)^{GL_5} = 0.$
\end{props}

To compute $QP_5((2)|^4)^{GL_5}$ we need the following.

\begin{lems}\label{bdt31} We have 
$$QP_5((2)|^4)^{\Sigma_5} = \langle [p_s]: 1 \leqslant s \leqslant 9\rangle, $$
where the polynomials $p_s$ are determined as in Subsection \ref{s53}.
\end{lems}

\begin{proof}[Proof of Proposition \ref{mdt31}] Suppose $f \in QP_5((2)|^4)$ and $[f] \in P_5((2)|^4)^{GL_5}$. Then, $[f] \in P_5((2)|^4)^{\Sigma_5}$. By Lemma \ref{bdt31}, we have 
$$ f \equiv \sum_{1 \leqslant s \leqslant 9}\gamma_sp_s,$$
where $\gamma_s \in \mathbb F_2$ for $1 \leqslant s \leqslant 9$. By computing $\rho_5(f)+f$ in terms of the admissible monomials, we get
\begin{align*}
\rho_5(f)+f &\equiv (\gamma_1 + \gamma_2)x_2^{15}x_3^{15} + (\gamma_2+\gamma_6)x_2x_3^{15}x_4^{14} + \gamma_5x_2^3x_3x_4^{12}x_5^{14} \\ &\quad + \gamma_6[x_1^3x_2x_3^4x_4^{10}x_5^{12}] +
\gamma_7x_1x_2^3x_3^2x_4^{12}x_5^{12} +
\gamma_8x_1x_2^3x_3^{12}x_4^{2}x_5^{12}\\ &\quad +
(\gamma_2+\gamma_4)x_2^{15}x_4x_5^{14} + 
(\gamma_2+\gamma_3)x_1x_2^{15}x_3^{14}\\ &\quad  + 
(\gamma_5+\gamma_9)[x_2x_3^3x_4^{12}x_5^{14}] + \mbox{other terms} \equiv 0.
\end{align*}
This equality implies $\gamma_s = 0$ for $1 \leqslant s \leqslant 9$. Hence, $[f] = 0$. The proposition is proved.
\end{proof}

Since $QP_5(2,4,3,1)$ is an $GL_5$-module of dimension 1, we get the following.  
\begin{props}\label{mdt32} We have 
$$QP_5(2,4,3,1)^{GL_5} = \langle [x_1^3x_2^5x_3^6x_4^6x_5^{10}]_{\omega}\rangle, \mbox{ with } \omega = (2,4,3,1).$$
\end{props}
\subsection{Computation of $QP_5((4)|(3)|^2|(1))^{GL_5}$}\

\medskip
By a direct computation based on the basis of the space $QP_5((4)|(3)|^2|(1))$ as presented in \cite{smo} one gets the following result. 

\begin{lems}\label{bdt3} We have
$$QP_5((4)|(3)|^2|(1)) = \langle [q_s]_{\omega^*} : 1 \leqslant s \leqslant 7\rangle, $$
where the polynomials $q_s,\, 1 \leqslant s \leqslant 7$, are determined as in Subsection \ref{s54} and $\omega^* = (4,3,3,1)$.
\end{lems}
\begin{proof} We have a direct summand decomposition of the $\Sigma_5$-modules
\begin{align*}
&QP_5((4)|(3)|^2|(1)) \cong \Sigma_5(x_1x_2^7x_3^7x_4^{15})\bigoplus \Sigma_5(x_1^3x_2^5x_3^7x_4^{15}) \bigoplus \Sigma_5(x_1^3x_2^7x_3^7x_4^{13})\\
&\quad \bigoplus \Sigma_5(x_1x_2x_3^6x_4^7x_5^{15}, x_1x_2^2x_3^5x_4^7x_5^{15}) \bigoplus \Sigma_5(x_1x_2^3x_3^5x_4^6x_5^{15})\bigoplus \mathcal M,
\end{align*}
where $\mathcal M$ is the subspace of $QP_5((4)|(3)|^2|(1))$ spanned by 330 admissible monomials of $B_5^+(4,3,3,1)\setminus \mathcal C$ as listed in \cite[Theorem 3.3.7]{smo}. By a direct computation one gets
\begin{align*}
&\Sigma_5(x_1x_2^7x_3^7x_4^{15})^{\Sigma_5} = \langle[q_1]_{\omega^*}\rangle\\
&\Sigma_5(x_1^3x_2^5x_3^7x_4^{15})^{\Sigma_5} = \langle[q_2]_{\omega^*}\rangle\\
&\Sigma_5(x_1^3x_2^7x_3^7x_4^{13})^{\Sigma_5} = 0\\
&\Sigma_5(x_1x_2x_3^6x_4^7x_5^{15}, x_1x_2^2x_3^5x_4^7x_5^{15})^{\Sigma_5} = \langle[q_3]_{\omega^*}\rangle\\
&\Sigma_5(x_1x_2^3x_3^5x_4^6x_5^{15})^{\Sigma_5} = \langle[q_4]_{\omega^*}\rangle\\
&\mathcal M^{\Sigma_5} = \langle [q_5]_{\omega^*}, [q_6]_{\omega^*}, [q_7]_{\omega^*}\rangle
\end{align*}

\medskip
We present the detailed proof for the relation $\Sigma_5(x_1x_2^3x_3^5x_4^6x_5^{15})^{\Sigma_5} = \langle[q_4]_{\omega^*}\rangle$. We have  $\Sigma_5(x_1x_2^3x_3^5x_4^6x_5^{15}) = \langle [d_t]_{\omega^*}: 1 \leqslant t \leqslant 60\rangle$, where the monomials $d_t,\, 11 \leqslant t \leqslant 60$, are determined as follows:

\medskip
\centerline{\begin{tabular}{llll}
$1.\ x_1x_2^{3}x_3^{5}x_4^{6}x_5^{15}$& $2.\ x_1x_2^{3}x_3^{5}x_4^{15}x_5^{6}$& $3.\ x_1x_2^{3}x_3^{6}x_4^{5}x_5^{15}$& $4.\ x_1x_2^{3}x_3^{6}x_4^{15}x_5^{5} $\cr  $5.\ x_1x_2^{3}x_3^{15}x_4^{5}x_5^{6}$& $6.\ x_1x_2^{3}x_3^{15}x_4^{6}x_5^{5}$& $7.\ x_1x_2^{6}x_3^{3}x_4^{5}x_5^{15}$& $8.\ x_1x_2^{6}x_3^{3}x_4^{15}x_5^{5} $\cr  $9.\ x_1x_2^{6}x_3^{15}x_4^{3}x_5^{5}$& $10.\ x_1x_2^{15}x_3^{3}x_4^{5}x_5^{6}$& $11.\ x_1x_2^{15}x_3^{3}x_4^{6}x_5^{5}$& $12.\ x_1x_2^{15}x_3^{6}x_4^{3}x_5^{5} $\cr  $13.\ x_1^{3}x_2x_3^{5}x_4^{6}x_5^{15}$& $14.\ x_1^{3}x_2x_3^{5}x_4^{15}x_5^{6}$& $15.\ x_1^{3}x_2x_3^{6}x_4^{5}x_5^{15}$& $16.\ x_1^{3}x_2x_3^{6}x_4^{15}x_5^{5} $\cr  $17.\ x_1^{3}x_2x_3^{15}x_4^{5}x_5^{6}$& $18.\ x_1^{3}x_2x_3^{15}x_4^{6}x_5^{5}$& $19.\ x_1^{3}x_2^{3}x_3^{4}x_4^{5}x_5^{15}$& $20.\ x_1^{3}x_2^{3}x_3^{4}x_4^{15}x_5^{5} $\cr  $21.\ x_1^{3}x_2^{3}x_3^{5}x_4^{4}x_5^{15}$& $22.\ x_1^{3}x_2^{3}x_3^{5}x_4^{15}x_5^{4}$& $23.\ x_1^{3}x_2^{3}x_3^{15}x_4^{4}x_5^{5}$& $24.\ x_1^{3}x_2^{3}x_3^{15}x_4^{5}x_5^{4} $\cr  $25.\ x_1^{3}x_2^{4}x_3^{3}x_4^{5}x_5^{15}$& $26.\ x_1^{3}x_2^{4}x_3^{3}x_4^{15}x_5^{5}$& $27.\ x_1^{3}x_2^{4}x_3^{15}x_4^{3}x_5^{5}$& $28.\ x_1^{3}x_2^{5}x_3x_4^{6}x_5^{15} $\cr  $29.\ x_1^{3}x_2^{5}x_3x_4^{15}x_5^{6}$& $30.\ x_1^{3}x_2^{5}x_3^{2}x_4^{5}x_5^{15}$& $31.\ x_1^{3}x_2^{5}x_3^{2}x_4^{15}x_5^{5}$& $32.\ x_1^{3}x_2^{5}x_3^{3}x_4^{4}x_5^{15} $\cr  $33.\ x_1^{3}x_2^{5}x_3^{3}x_4^{15}x_5^{4}$& $34.\ x_1^{3}x_2^{5}x_3^{6}x_4x_5^{15}$& $35.\ x_1^{3}x_2^{5}x_3^{6}x_4^{15}x_5$& $36.\ x_1^{3}x_2^{5}x_3^{15}x_4x_5^{6} $\cr  $37.\ x_1^{3}x_2^{5}x_3^{15}x_4^{2}x_5^{5}$& $38.\ x_1^{3}x_2^{5}x_3^{15}x_4^{3}x_5^{4}$& $39.\ x_1^{3}x_2^{5}x_3^{15}x_4^{6}x_5$& $40.\ x_1^{3}x_2^{15}x_3x_4^{5}x_5^{6} $\cr  $41.\ x_1^{3}x_2^{15}x_3x_4^{6}x_5^{5}$& $42.\ x_1^{3}x_2^{15}x_3^{3}x_4^{4}x_5^{5}$& $43.\ x_1^{3}x_2^{15}x_3^{3}x_4^{5}x_5^{4}$& $44.\ x_1^{3}x_2^{15}x_3^{4}x_4^{3}x_5^{5} $\cr  $45.\ x_1^{3}x_2^{15}x_3^{5}x_4x_5^{6}$& $46.\ x_1^{3}x_2^{15}x_3^{5}x_4^{2}x_5^{5}$& $47.\ x_1^{3}x_2^{15}x_3^{5}x_4^{3}x_5^{4}$& $48.\ x_1^{3}x_2^{15}x_3^{5}x_4^{6}x_5 $\cr  $49.\ x_1^{15}x_2x_3^{3}x_4^{5}x_5^{6}$& $50.\ x_1^{15}x_2x_3^{3}x_4^{6}x_5^{5}$& $51.\ x_1^{15}x_2x_3^{6}x_4^{3}x_5^{5}$& $52.\ x_1^{15}x_2^{3}x_3x_4^{5}x_5^{6} $\cr  $53.\ x_1^{15}x_2^{3}x_3x_4^{6}x_5^{5}$& $54.\ x_1^{15}x_2^{3}x_3^{3}x_4^{4}x_5^{5}$& $55.\ x_1^{15}x_2^{3}x_3^{3}x_4^{5}x_5^{4}$& $56.\ x_1^{15}x_2^{3}x_3^{4}x_4^{3}x_5^{5} $\cr  $57.\ x_1^{15}x_2^{3}x_3^{5}x_4x_5^{6}$& $58.\ x_1^{15}x_2^{3}x_3^{5}x_4^{2}x_5^{5}$& $59.\ x_1^{15}x_2^{3}x_3^{5}x_4^{3}x_5^{4}$& $60.\ x_1^{15}x_2^{3}x_3^{5}x_4^{6}x_5. $\cr  
\end{tabular}} 

Suppose $f \in P_5(\omega^*)$ and $[f]_{\omega^*} \in  \Sigma_5(x_1x_2^3x_3^5x_4^6x_5^{15})^{\Sigma_5}$. Then we have
$f \equiv_{\omega^*} \sum_{t=1}^{60}d_t$ with $\gamma_t \in \mathbb F_2$ and $\rho_j(f)+f \equiv_{\omega^*} 0$, $1 \leqslant j \leqslant 4$. 

By a direct computation we get
\begin{align*}
\rho_1(f) + f &\equiv_{\omega^*} \gamma_{\{1,13\}}d_{1} + \gamma_{\{2,14\}}d_{2} + \gamma_{\{3,15,25\}}d_{3} + \gamma_{\{4,16,26\}}d_{4} + \gamma_{\{5,17\}}d_{5}\\ 
&\quad + \gamma_{\{6,18,27\}}d_{6} + \gamma_{\{7,25\}}d_{7} + \gamma_{\{8,26\}}d_{8} + \gamma_{\{9,27\}}d_{9} + \gamma_{\{10,49\}}d_{10}\\ 
&\quad + \gamma_{\{11,50\}}d_{11} + \gamma_{\{12,51\}}d_{12} + \gamma_{\{1,13\}}d_{13} + \gamma_{\{2,14\}}d_{14} + \gamma_{\{3,7,15\}}d_{15}\\ 
&\quad + \gamma_{\{4,8,16\}}d_{16} + \gamma_{\{5,17\}}d_{17} + \gamma_{\{6,9,18\}}d_{18} + \gamma_{30}d_{19} + \gamma_{31}d_{20}\\ 
&\quad + \gamma_{32}d_{21} + \gamma_{33}d_{22} + \gamma_{37}d_{23} + \gamma_{38}d_{24} + \gamma_{\{7,25\}}d_{25} + \gamma_{\{8,26\}}d_{26}\\ 
&\quad + \gamma_{\{9,27\}}d_{27} + \gamma_{\{40,52\}}d_{40} + \gamma_{\{41,53\}}d_{41} + \gamma_{\{42,54\}}d_{42} + \gamma_{\{43,55\}}d_{43}\\ 
&\quad + \gamma_{\{44,56\}}d_{44} + \gamma_{\{45,57\}}d_{45} + \gamma_{\{46,58\}}d_{46} + \gamma_{\{47,59\}}d_{47} + \gamma_{\{48,60\}}d_{48}\\ 
&\quad + \gamma_{\{10,49\}}d_{49} + \gamma_{\{11,50\}}d_{50} + \gamma_{\{12,51\}}d_{51} + \gamma_{\{40,52\}}d_{52} + \gamma_{\{41,53\}}d_{53}\\ 
&\quad + \gamma_{\{42,54\}}d_{54} + \gamma_{\{43,55\}}d_{55} + \gamma_{\{44,56\}}d_{56} + \gamma_{\{45,57\}}d_{57} + \gamma_{\{46,58\}}d_{58}\\ 
&\quad + \gamma_{\{47,59\}}d_{59} + \gamma_{\{48,60\}}d_{60}  \equiv_{\omega^*} 0,\\
\rho_2(f) + f &\equiv_{\omega^*} \gamma_{\{3,7\}}d_{3} + \gamma_{\{4,8\}}d_{4} + \gamma_{\{5,10\}}d_{5} + \gamma_{\{6,11\}}d_{6} + \gamma_{\{3,7\}}d_{7} + \gamma_{\{4,8\}}d_{8}\\ 
&\quad + \gamma_{\{9,12\}}d_{9} + \gamma_{\{5,10\}}d_{10} + \gamma_{\{6,11\}}d_{11} + \gamma_{\{9,12\}}d_{12} + \gamma_{\{13,28,30\}}d_{13}\\ 
&\quad + \gamma_{\{14,29,31\}}d_{14} + \gamma_{\{15,30\}}d_{15} + \gamma_{\{16,31\}}d_{16} + \gamma_{\{17,40\}}d_{17}\\ 
&\quad + \gamma_{\{18,41\}}d_{18} + \gamma_{\{19,25\}}d_{19} + \gamma_{\{20,26\}}d_{20} + \gamma_{\{21,32,34\}}d_{21}\\ 
&\quad + \gamma_{\{22,33,35\}}d_{22} + \gamma_{\{23,42\}}d_{23} + \gamma_{\{24,43\}}d_{24} + \gamma_{\{19,25\}}d_{25}\\ 
&\quad + \gamma_{\{20,26\}}d_{26} + \gamma_{\{27,44\}}d_{27} + \gamma_{\{13,15,28\}}d_{28} + \gamma_{\{14,16,29\}}d_{29}\\ 
&\quad + \gamma_{\{15,30\}}d_{30} + \gamma_{\{16,31\}}d_{31} + \gamma_{\{21,32,34\}}d_{32} + \gamma_{\{22,33,35\}}d_{33}\\ 
&\quad + \gamma_{\{36,45\}}d_{36} + \gamma_{\{37,46\}}d_{37} + \gamma_{\{38,47\}}d_{38} + \gamma_{\{39,48\}}d_{39} + \gamma_{\{17,40\}}d_{40}\\ 
&\quad + \gamma_{\{18,41\}}d_{41} + \gamma_{\{23,42\}}d_{42} + \gamma_{\{24,43\}}d_{43} + \gamma_{\{27,44\}}d_{44} + \gamma_{\{36,45\}}d_{45}\\ 
&\quad + \gamma_{\{37,46\}}d_{46} + \gamma_{\{38,47\}}d_{47} + \gamma_{\{39,48\}}d_{48} + \gamma_{\{49,52\}}d_{49}\\ 
&\quad + \gamma_{\{50,53,56\}}d_{50} + \gamma_{\{51,56\}}d_{51} + \gamma_{\{49,52\}}d_{52} + \gamma_{\{50,51,53\}}d_{53}\\ 
&\quad + \gamma_{58}d_{54} + \gamma_{59}d_{55} + \gamma_{\{51,56\}}d_{56} \equiv_{\omega^*} 0,\\
\rho_3(f) + f &\equiv_{\omega^*} \gamma_{\{1,3\}}d_{1} + \gamma_{\{2,5\}}d_{2} + \gamma_{\{1,3\}}d_{3} + \gamma_{\{4,6\}}d_{4} + \gamma_{\{2,5\}}d_{5} + \gamma_{\{4,6\}}d_{6}\\ 
&\quad + \gamma_{\{8,9\}}d_{8} + \gamma_{\{8,9\}}d_{9} + \gamma_{\{11,12\}}d_{11} + \gamma_{\{11,12\}}d_{12} + \gamma_{\{13,15,25\}}d_{13}\\ 
&\quad + \gamma_{\{14,17\}}d_{14} + \gamma_{\{13,15,25\}}d_{15} + \gamma_{\{16,18\}}d_{16} + \gamma_{\{14,17\}}d_{17} + \gamma_{\{16,18\}}d_{18}\\ 
&\quad + \gamma_{\{19,21,32\}}d_{19} + \gamma_{\{20,23\}}d_{20} + \gamma_{\{19,21,30\}}d_{21} + \gamma_{\{22,24\}}d_{22}\\ 
&\quad + \gamma_{\{20,23\}}d_{23} + \gamma_{\{22,24\}}d_{24} + \gamma_{\{26,27\}}d_{26} + \gamma_{\{26,27\}}d_{27} + \gamma_{\{28,34\}}d_{28}\\ 
&\quad + \gamma_{\{29,36\}}d_{29} + \gamma_{\{30,32\}}d_{30} + \gamma_{\{31,37\}}d_{31} + \gamma_{\{30,32\}}d_{32} + \gamma_{\{33,38\}}d_{33}\\ 
&\quad + \gamma_{\{28,34\}}d_{34} + \gamma_{\{35,39\}}d_{35} + \gamma_{\{29,36\}}d_{36} + \gamma_{\{31,37\}}d_{37} + \gamma_{\{33,38\}}d_{38}\\ 
&\quad + \gamma_{\{35,39\}}d_{39} + \gamma_{\{40,45,46\}}d_{40} + \gamma_{\{41,46\}}d_{41} + \gamma_{\{42,44\}}d_{42}\\ 
&\quad + \gamma_{\{43,47,48\}}d_{43} + \gamma_{\{42,44\}}d_{44} + \gamma_{\{40,41,45\}}d_{45} + \gamma_{\{41,46\}}d_{46}\\ 
&\quad + \gamma_{\{43,47,48\}}d_{47} + \gamma_{\{50,51\}}d_{50} + \gamma_{\{50,51\}}d_{51} + \gamma_{\{52,57,58\}}d_{52}\\ 
&\quad + \gamma_{\{53,58\}}d_{53} + \gamma_{\{54,56\}}d_{54} + \gamma_{\{55,59,60\}}d_{55} + \gamma_{\{54,56\}}d_{56}\\ 
&\quad + \gamma_{\{52,53,57\}}d_{57} + \gamma_{\{53,58\}}d_{58} + \gamma_{\{55,59,60\}}d_{59} \equiv_{\omega^*} 0,\\
\rho_4(f) + f &\equiv_{\omega^*} \gamma_{\{1,2\}}d_{1} + \gamma_{\{1,2\}}d_{2} + \gamma_{\{3,4\}}d_{3} + \gamma_{\{3,4\}}d_{4} + \gamma_{\{5,6\}}d_{5} + \gamma_{\{5,6\}}d_{6}\\ 
&\quad + \gamma_{\{7,8\}}d_{7} + \gamma_{\{7,8\}}d_{8} + \gamma_{\{10,11\}}d_{10} + \gamma_{\{10,11\}}d_{11} + \gamma_{\{13,14\}}d_{13}\\ 
&\quad + \gamma_{\{13,14\}}d_{14} + \gamma_{\{15,16\}}d_{15} + \gamma_{\{15,16\}}d_{16} + \gamma_{\{17,18,27\}}d_{17}\\ 
&\quad + \gamma_{\{17,18,27\}}d_{18} + \gamma_{\{19,20\}}d_{19} + \gamma_{\{19,20\}}d_{20} + \gamma_{\{21,22\}}d_{21}\\ 
&\quad + \gamma_{\{21,22\}}d_{22} + \gamma_{\{23,24,38\}}d_{23} + \gamma_{\{23,24,37\}}d_{24} + \gamma_{\{25,26\}}d_{25}\\ 
&\quad + \gamma_{\{25,26\}}d_{26} + \gamma_{\{28,29\}}d_{28} + \gamma_{\{28,29\}}d_{29} + \gamma_{\{30,31\}}d_{30} + \gamma_{\{30,31\}}d_{31}\\ 
&\quad + \gamma_{\{32,33\}}d_{32} + \gamma_{\{32,33\}}d_{33} + \gamma_{\{34,35\}}d_{34} + \gamma_{\{34,35\}}d_{35} + \gamma_{\{36,39\}}d_{36}\\ 
&\quad + \gamma_{\{37,38\}}d_{37} + \gamma_{\{37,38\}}d_{38} + \gamma_{\{36,39\}}d_{39} + \gamma_{\{40,41,44\}}d_{40}\\ 
&\quad + \gamma_{\{40,41,44\}}d_{41} + \gamma_{\{42,43,47\}}d_{42} + \gamma_{\{42,43,46\}}d_{43} + \gamma_{\{45,48\}}d_{45}\\ 
&\quad + \gamma_{\{46,47\}}d_{46} + \gamma_{\{46,47\}}d_{47} + \gamma_{\{45,48\}}d_{48} + \gamma_{\{49,50\}}d_{49} + \gamma_{\{49,50\}}d_{50}\\ 
&\quad + \gamma_{\{52,53,56\}}d_{52} + \gamma_{\{52,53,56\}}d_{53} + \gamma_{\{54,55,59\}}d_{54} + \gamma_{\{54,55,58\}}d_{55}\\ 
&\quad + \gamma_{\{57,60\}}d_{57} + \gamma_{\{58,59\}}d_{58} + \gamma_{\{58,59\}}d_{59} + \gamma_{\{57,60\}}d_{60} \equiv_{\omega^*} 0.
\end{align*}
From the above equalities we get $\gamma_t = 0$ for $t\in 
\mathbb J = \{$15,\, 16,\, 18,\, 30,\, 31,\, 32,\, 33,\, 37,\, 38,\, 41,\, 46,\, 47,\, 53,\, 58,\, 59$\}$ and $\gamma_t = \gamma_1$ for $t \notin \mathbb J$. Hence $f \equiv_{\omega^*} q_4$. The lemma ia proved.
\end{proof}
\begin{props}\label{mdt33} We have $QP_5((4)|(3)|^2|(1))^{GL_5} = 0.$
\end{props}
\begin{proof} Let $h \in P_5(\omega^*)$ such that $[h]_{\omega^*} \in  QP_5((4)|(3)|^2|(1))^{GL_5}$. Then we have $[h]_{\omega^*} \in  QP_5((4)|(3)|^2|(1))^{\Sigma_5}$. By Lemma \ref{bdt3} we have
$h \equiv_{\omega^*} \sum_{u=1}^{7}q_u$ with $\gamma_u \in \mathbb F_2$. By a direct computation we get
\begin{align*}
\rho_5(h) + h &\equiv_{\omega^*} \gamma_{1}x_2x_3^{7}x_4^{7}x_5^{15} + \gamma_{2}x_2^{3}x_3^{5}x_4^{7}x_5^{15} + \gamma_{3}x_1x_2^{15}x_3x_4^{6}x_5^{7}\\  &\quad + \gamma_{4}x_1x_2^{15}x_3^{3}x_4^{5}x_5^{6} + \gamma_{5}x_1^{3}x_2^{7}x_3x_4^{7}x_5^{12} + \gamma_{6}x_1^{3}x_2^{7}x_3x_4^{5}x_5^{14}\\  &\quad + \gamma_{7}x_1^{3}x_2^{7}x_3^{3}x_4^{5}x_5^{12} + \mbox{ other terms } \equiv_{\omega^*} 0.
\end{align*}
This equality implies $\gamma_u = 0$ for $1 \leqslant u \leqslant 7$. The proposition is proved.
\end{proof}

\subsection{Proof of Theorem \ref{thm2}}\

\medskip
It is not difficult to check that $\rho_t(q) + q \equiv 0$ for $1 \leqslant t \leqslant 5$. Hence, $[q]\in (QP_5)_{30}^{GL_5}$.

Suppose $g \in P_5$ and $[g] \in (QP_5)_{30}^{GL_5}$. By Proposition \ref{mdt33}, $[g]_{(4,3,3,1)} = 0$, hence $[g]_{(2,4,3,1)}\in QP_5(2,4,3,1)^{GL_5}$. By using Proposition \ref{mdt32}, we get 
$$g \equiv \gamma x_1^3x_2^5x_3^6x_4^6x_5^{10} + \sum_{b\in B((2)^4)}\gamma_b.b,$$
where $\gamma,\, \gamma_b \in \mathbb F_2$ for all $b\in B((2)^4)$. By a direct computation, we see that $\rho_t(g)+g \equiv 0$ for $1 \leqslant t < 5$ if and only if 
$$ f \equiv \gamma(x_1^3x_2^5x_3^6x_4^6x_5^{10}+p_0) + \sum_{1 \leqslant s \leqslant 9}\gamma_sp_s,$$
where $\gamma_s \in \mathbb F_2$ for $1 \leqslant s \leqslant 9,$ and
\begin{align*}
p_0 &= x_1x_2^{2}x_3x_4^{14}x_5^{12} + x_1x_2^{2}x_3^{3}x_4^{12}x_5^{12} + x_1x_2^{2}x_3^{12}x_4x_5^{14} + x_1x_2^{2}x_3^{13}x_4^{2}x_5^{12}\\ &\quad + x_1x_2^{3}x_3^{2}x_4^{12}x_5^{12} + x_1x_2^{3}x_3^{4}x_4^{10}x_5^{12} + x_1x_2^{3}x_3^{6}x_4^{8}x_5^{12} + x_1x_2^{3}x_3^{12}x_4^{2}x_5^{12}\\ &\quad + x_1x_2^{14}x_3x_4^{2}x_5^{12} + x_1^{3}x_2x_3^{4}x_4^{10}x_5^{12} + x_1^{3}x_2x_3^{6}x_4^{8}x_5^{12} + x_1^{3}x_2x_3^{12}x_4^{2}x_5^{12}\\ &\quad + x_1^{3}x_2^{5}x_3^{2}x_4^{8}x_5^{12}.
\end{align*}
Now, computing from the relation $\rho_5(g) + g \equiv 0$, we obtain $\gamma_s = \gamma$ for $1 \leqslant s \leqslant 9$. Then, we have 
$$g \equiv \gamma\left(x_1^3x_2^5x_3^6x_4^6x_5^{10}+ \sum_{0\leqslant s \leqslant 9}p_s\right) = \gamma q.$$
The theorem is proved.


\section{Appendix}\label{s5}
\setcounter{equation}{0}

\subsection{The admissible monomials of degree 20 in $P_5$}\label{s50}\

\medskip
In this subsection we list the needed admissible monomials of degree $20$ in $P_5$. The elements in
$B_5^0(20)$ can easily determined by using the results in \cite{su5} and the relation \ref{ctbs2}. We have $B_4(20) = \{w_t : 1 \leqslant t \leqslant 55\}$, where

\medskip
\centerline{\begin{tabular}{lllll}
$v_{1} =  x_1x_2x_3^{3}x_4^{15}$& $v_{2} =  x_1x_2x_3^{7}x_4^{11}$& $v_{3} =  x_1x_2x_3^{15}x_4^{3}$& $v_{4} =  x_1x_2^{3}x_3x_4^{15} $\cr  $v_{5} =  x_1x_2^{3}x_3^{3}x_4^{13}$& $v_{6} =  x_1x_2^{3}x_3^{5}x_4^{11}$& $v_{7} =  x_1x_2^{3}x_3^{7}x_4^{9}$& $v_{8} =  x_1x_2^{3}x_3^{13}x_4^{3} $\cr  $v_{9} =  x_1x_2^{3}x_3^{15}x_4$& $v_{10} =  x_1x_2^{7}x_3x_4^{11}$& $v_{11} =  x_1x_2^{7}x_3^{3}x_4^{9}$& $v_{12} =  x_1x_2^{7}x_3^{9}x_4^{3} $\cr  $v_{13} =  x_1x_2^{7}x_3^{11}x_4$& $v_{14} =  x_1x_2^{15}x_3x_4^{3}$& $v_{15} =  x_1x_2^{15}x_3^{3}x_4$& $v_{16} =  x_1^{3}x_2x_3x_4^{15} $\cr  $v_{17} =  x_1^{3}x_2x_3^{3}x_4^{13}$& $v_{18} =  x_1^{3}x_2x_3^{5}x_4^{11}$& $v_{19} =  x_1^{3}x_2x_3^{7}x_4^{9}$& $v_{20} =  x_1^{3}x_2x_3^{13}x_4^{3} $\cr  $v_{21} =  x_1^{3}x_2x_3^{15}x_4$& $v_{22} =  x_1^{3}x_2^{3}x_3x_4^{13}$& $v_{23} =  x_1^{3}x_2^{3}x_3^{5}x_4^{9}$& $v_{24} =  x_1^{3}x_2^{3}x_3^{13}x_4 $\cr  $v_{25} =  x_1^{3}x_2^{5}x_3x_4^{11}$& $v_{26} =  x_1^{3}x_2^{5}x_3^{3}x_4^{9}$& $v_{27} =  x_1^{3}x_2^{5}x_3^{9}x_4^{3}$& $v_{28} =  x_1^{3}x_2^{5}x_3^{11}x_4 $\cr  $v_{29} =  x_1^{3}x_2^{7}x_3x_4^{9}$& $v_{30} =  x_1^{3}x_2^{7}x_3^{9}x_4$& $v_{31} =  x_1^{3}x_2^{13}x_3x_4^{3}$& $v_{32} =  x_1^{3}x_2^{13}x_3^{3}x_4 $\cr  $v_{33} =  x_1^{3}x_2^{15}x_3x_4$& $v_{34} =  x_1^{7}x_2x_3x_4^{11}$& $v_{35} =  x_1^{7}x_2x_3^{3}x_4^{9}$& $v_{36} =  x_1^{7}x_2x_3^{9}x_4^{3} $\cr  $v_{37} =  x_1^{7}x_2x_3^{11}x_4$& $v_{38} =  x_1^{7}x_2^{3}x_3x_4^{9}$& $v_{39} =  x_1^{7}x_2^{3}x_3^{9}x_4$& $v_{40} =  x_1^{7}x_2^{9}x_3x_4^{3} $\cr  $v_{41} =  x_1^{7}x_2^{9}x_3^{3}x_4$& $v_{42} =  x_1^{7}x_2^{11}x_3x_4$& $v_{43} =  x_1^{15}x_2x_3x_4^{3}$& $v_{44} =  x_1^{15}x_2x_3^{3}x_4 $\cr  $v_{45} =  x_1^{15}x_2^{3}x_3x_4$& $v_{46} =  x_1^{3}x_2^{5}x_3^{5}x_4^{7}$& $v_{47} =  x_1^{3}x_2^{5}x_3^{7}x_4^{5}$& $v_{48} =  x_1^{3}x_2^{7}x_3^{5}x_4^{5} $\cr  $v_{49} =  x_1^{7}x_2^{3}x_3^{5}x_4^{5}$& $v_{50} =  x_1^{3}x_2^{3}x_3^{7}x_4^{7}$& $v_{51} =  x_1^{3}x_2^{7}x_3^{3}x_4^{7}$& $v_{52} =  x_1^{3}x_2^{7}x_3^{7}x_4^{3} $\cr  $v_{53} =  x_1^{7}x_2^{3}x_3^{3}x_4^{7}$& $v_{54} =  x_1^{7}x_2^{3}x_3^{7}x_4^{3}$& $v_{55} =  x_1^{7}x_2^{7}x_3^{3}x_4^{3}$& \cr   
\end{tabular}}

\medskip
Note that $\omega(w_t) = (4)|(2)|(1)|^2$ for $1 \leqslant t \leqslant 45$, $\omega(w_t) = (4)|(2)|(3)$ for $46 \leqslant t \leqslant 49$ and $\omega(w_t) = (4)|^{2}|(2)$ for $50 \leqslant t \leqslant 55$. Hence, we get $|B_5^0((4)|(2)|(1)|^2)| = 225$, $|B_5^0((4)|(2)|(3))| = 20$, $|B_5^0((4)|^{2}|(2))| = 30$ and $|B_5^0(20)| = 275$. So, we list only the elements in $B_5^+(20) = B_5^+((4)|(2)|(1)|^2)\bigcup B_5^+((4)|(2)|(3))\bigcup B_5^+((4)|^2|(2))$. 

\medskip
$B_5^+((4)|(2)|(1)|^2) = \{a_t = a_{t} : 1 \leqslant t \leqslant 225\}$, where

\medskip
\centerline{\begin{tabular}{lllll}
$1.\ x_1x_2x_3x_4^{2}x_5^{15}$& $2.\ x_1x_2x_3x_4^{15}x_5^{2}$& $3.\ x_1x_2x_3^{2}x_4x_5^{15}$& $4.\ x_1x_2x_3^{2}x_4^{15}x_5 $\cr  $5.\ x_1x_2x_3^{15}x_4x_5^{2}$& $6.\ x_1x_2x_3^{15}x_4^{2}x_5$& $7.\ x_1x_2^{2}x_3x_4x_5^{15}$& $8.\ x_1x_2^{2}x_3x_4^{15}x_5 $\cr  $9.\ x_1x_2^{2}x_3^{15}x_4x_5$& $10.\ x_1x_2^{15}x_3x_4x_5^{2}$& $11.\ x_1x_2^{15}x_3x_4^{2}x_5$& $12.\ x_1x_2^{15}x_3^{2}x_4x_5 $\cr  $13.\ x_1^{15}x_2x_3x_4x_5^{2}$& $14.\ x_1^{15}x_2x_3x_4^{2}x_5$& $15.\ x_1^{15}x_2x_3^{2}x_4x_5$& $16.\ x_1x_2x_3x_4^{3}x_5^{14} $\cr  $17.\ x_1x_2x_3x_4^{6}x_5^{11}$& $18.\ x_1x_2x_3x_4^{7}x_5^{10}$& $19.\ x_1x_2x_3x_4^{14}x_5^{3}$& $20.\ x_1x_2x_3^{2}x_4^{3}x_5^{13} $\cr  $21.\ x_1x_2x_3^{2}x_4^{5}x_5^{11}$& $22.\ x_1x_2x_3^{2}x_4^{7}x_5^{9}$& $23.\ x_1x_2x_3^{2}x_4^{13}x_5^{3}$& $24.\ x_1x_2x_3^{3}x_4x_5^{14} $\cr  $25.\ x_1x_2x_3^{3}x_4^{2}x_5^{13}$& $26.\ x_1x_2x_3^{3}x_4^{3}x_5^{12}$& $27.\ x_1x_2x_3^{3}x_4^{4}x_5^{11}$& $28.\ x_1x_2x_3^{3}x_4^{5}x_5^{10} $\cr  $29.\ x_1x_2x_3^{3}x_4^{6}x_5^{9}$& $30.\ x_1x_2x_3^{3}x_4^{7}x_5^{8}$& $31.\ x_1x_2x_3^{3}x_4^{12}x_5^{3}$& $32.\ x_1x_2x_3^{3}x_4^{13}x_5^{2} $\cr  $33.\ x_1x_2x_3^{3}x_4^{14}x_5$& $34.\ x_1x_2x_3^{6}x_4x_5^{11}$& $35.\ x_1x_2x_3^{6}x_4^{3}x_5^{9}$& $36.\ x_1x_2x_3^{6}x_4^{9}x_5^{3} $\cr  $37.\ x_1x_2x_3^{6}x_4^{11}x_5$& $38.\ x_1x_2x_3^{7}x_4x_5^{10}$& $39.\ x_1x_2x_3^{7}x_4^{2}x_5^{9}$& $40.\ x_1x_2x_3^{7}x_4^{3}x_5^{8} $\cr  $41.\ x_1x_2x_3^{7}x_4^{8}x_5^{3}$& $42.\ x_1x_2x_3^{7}x_4^{9}x_5^{2}$& $43.\ x_1x_2x_3^{7}x_4^{10}x_5$& $44.\ x_1x_2x_3^{14}x_4x_5^{3} $\cr  $45.\ x_1x_2x_3^{14}x_4^{3}x_5$& $46.\ x_1x_2^{2}x_3x_4^{3}x_5^{13}$& $47.\ x_1x_2^{2}x_3x_4^{5}x_5^{11}$& $48.\ x_1x_2^{2}x_3x_4^{7}x_5^{9} $\cr  $49.\ x_1x_2^{2}x_3x_4^{13}x_5^{3}$& $50.\ x_1x_2^{2}x_3^{3}x_4x_5^{13}$& $51.\ x_1x_2^{2}x_3^{3}x_4^{5}x_5^{9}$& $52.\ x_1x_2^{2}x_3^{3}x_4^{13}x_5 $\cr  $53.\ x_1x_2^{2}x_3^{5}x_4x_5^{11}$& $54.\ x_1x_2^{2}x_3^{5}x_4^{3}x_5^{9}$& $55.\ x_1x_2^{2}x_3^{5}x_4^{9}x_5^{3}$& $56.\ x_1x_2^{2}x_3^{5}x_4^{11}x_5 $\cr  $57.\ x_1x_2^{2}x_3^{7}x_4x_5^{9}$& $58.\ x_1x_2^{2}x_3^{7}x_4^{9}x_5$& $59.\ x_1x_2^{2}x_3^{13}x_4x_5^{3}$& $60.\ x_1x_2^{2}x_3^{13}x_4^{3}x_5 $\cr  $61.\ x_1x_2^{3}x_3x_4x_5^{14}$& $62.\ x_1x_2^{3}x_3x_4^{2}x_5^{13}$& $63.\ x_1x_2^{3}x_3x_4^{3}x_5^{12}$& $64.\ x_1x_2^{3}x_3x_4^{4}x_5^{11} $\cr  $65.\ x_1x_2^{3}x_3x_4^{5}x_5^{10}$& $66.\ x_1x_2^{3}x_3x_4^{6}x_5^{9}$& $67.\ x_1x_2^{3}x_3x_4^{7}x_5^{8}$& $68.\ x_1x_2^{3}x_3x_4^{12}x_5^{3} $\cr  $69.\ x_1x_2^{3}x_3x_4^{13}x_5^{2}$& $70.\ x_1x_2^{3}x_3x_4^{14}x_5$& $71.\ x_1x_2^{3}x_3^{2}x_4x_5^{13}$& $72.\ x_1x_2^{3}x_3^{2}x_4^{5}x_5^{9} $\cr  $73.\ x_1x_2^{3}x_3^{2}x_4^{13}x_5$& $74.\ x_1x_2^{3}x_3^{3}x_4x_5^{12}$& $75.\ x_1x_2^{3}x_3^{3}x_4^{4}x_5^{9}$& $76.\ x_1x_2^{3}x_3^{3}x_4^{5}x_5^{8} $\cr  $77.\ x_1x_2^{3}x_3^{3}x_4^{12}x_5$& $78.\ x_1x_2^{3}x_3^{4}x_4x_5^{11}$& $79.\ x_1x_2^{3}x_3^{4}x_4^{3}x_5^{9}$& $80.\ x_1x_2^{3}x_3^{4}x_4^{9}x_5^{3} $\cr  $81.\ x_1x_2^{3}x_3^{4}x_4^{11}x_5$& $82.\ x_1x_2^{3}x_3^{5}x_4x_5^{10}$& $83.\ x_1x_2^{3}x_3^{5}x_4^{2}x_5^{9}$& $84.\ x_1x_2^{3}x_3^{5}x_4^{3}x_5^{8} $\cr $85.\ x_1x_2^{3}x_3^{5}x_4^{8}x_5^{3}$& $86.\ x_1x_2^{3}x_3^{5}x_4^{9}x_5^{2}$& $87.\ x_1x_2^{3}x_3^{5}x_4^{10}x_5$& $88.\ x_1x_2^{3}x_3^{6}x_4x_5^{9} $\cr  
\end{tabular}} 
\centerline{\begin{tabular}{llll}
$89.\ x_1x_2^{3}x_3^{6}x_4^{9}x_5$& $90.\ x_1x_2^{3}x_3^{7}x_4x_5^{8}$& $91.\ x_1x_2^{3}x_3^{7}x_4^{8}x_5$& $92.\ x_1x_2^{3}x_3^{12}x_4x_5^{3} $\cr  $93.\ x_1x_2^{3}x_3^{12}x_4^{3}x_5$& $94.\ x_1x_2^{3}x_3^{13}x_4x_5^{2}$& $95.\ x_1x_2^{3}x_3^{13}x_4^{2}x_5$& $96.\ x_1x_2^{3}x_3^{14}x_4x_5 $\cr  $97.\ x_1x_2^{6}x_3x_4x_5^{11}$& $98.\ x_1x_2^{6}x_3x_4^{3}x_5^{9}$& $99.\ x_1x_2^{6}x_3x_4^{9}x_5^{3}$& $100.\ x_1x_2^{6}x_3x_4^{11}x_5 $\cr   $101.\ x_1x_2^{6}x_3^{3}x_4x_5^{9}$& $102.\ x_1x_2^{6}x_3^{3}x_4^{9}x_5$& $103.\ x_1x_2^{6}x_3^{9}x_4x_5^{3}$& $104.\ x_1x_2^{6}x_3^{9}x_4^{3}x_5 $\cr $105.\ x_1x_2^{6}x_3^{11}x_4x_5$& $106.\ x_1x_2^{7}x_3x_4x_5^{10}$& $107.\ x_1x_2^{7}x_3x_4^{2}x_5^{9}$& $108.\ x_1x_2^{7}x_3x_4^{3}x_5^{8} $\cr   $109.\ x_1x_2^{7}x_3x_4^{8}x_5^{3}$& $110.\ x_1x_2^{7}x_3x_4^{9}x_5^{2}$& $111.\ x_1x_2^{7}x_3x_4^{10}x_5$& $112.\ x_1x_2^{7}x_3^{2}x_4x_5^{9} $\cr  $113.\ x_1x_2^{7}x_3^{2}x_4^{9}x_5$& $114.\ x_1x_2^{7}x_3^{3}x_4x_5^{8}$& $115.\ x_1x_2^{7}x_3^{3}x_4^{8}x_5$& $116.\ x_1x_2^{7}x_3^{8}x_4x_5^{3} $\cr $117.\ x_1x_2^{7}x_3^{8}x_4^{3}x_5$& $118.\ x_1x_2^{7}x_3^{9}x_4x_5^{2}$& $119.\ x_1x_2^{7}x_3^{9}x_4^{2}x_5$& $120.\ x_1x_2^{7}x_3^{10}x_4x_5 $\cr  $121.\ x_1x_2^{14}x_3x_4x_5^{3}$& $122.\ x_1x_2^{14}x_3x_4^{3}x_5$& $123.\ x_1x_2^{14}x_3^{3}x_4x_5$& $124.\ x_1^{3}x_2x_3x_4x_5^{14} $\cr  $125.\ x_1^{3}x_2x_3x_4^{2}x_5^{13}$& $126.\ x_1^{3}x_2x_3x_4^{3}x_5^{12}$& $127.\ x_1^{3}x_2x_3x_4^{4}x_5^{11}$& $128.\ x_1^{3}x_2x_3x_4^{5}x_5^{10} $\cr  $129.\ x_1^{3}x_2x_3x_4^{6}x_5^{9}$& $130.\ x_1^{3}x_2x_3x_4^{7}x_5^{8}$& $131.\ x_1^{3}x_2x_3x_4^{12}x_5^{3}$& $132.\ x_1^{3}x_2x_3x_4^{13}x_5^{2} $\cr  $133.\ x_1^{3}x_2x_3x_4^{14}x_5$& $134.\ x_1^{3}x_2x_3^{2}x_4x_5^{13}$& $135.\ x_1^{3}x_2x_3^{2}x_4^{5}x_5^{9}$& $136.\ x_1^{3}x_2x_3^{2}x_4^{13}x_5 $\cr  $137.\ x_1^{3}x_2x_3^{3}x_4x_5^{12}$& $138.\ x_1^{3}x_2x_3^{3}x_4^{4}x_5^{9}$& $139.\ x_1^{3}x_2x_3^{3}x_4^{5}x_5^{8}$& $140.\ x_1^{3}x_2x_3^{3}x_4^{12}x_5 $\cr  $141.\ x_1^{3}x_2x_3^{4}x_4x_5^{11}$& $142.\ x_1^{3}x_2x_3^{4}x_4^{3}x_5^{9}$& $143.\ x_1^{3}x_2x_3^{4}x_4^{9}x_5^{3}$& $144.\ x_1^{3}x_2x_3^{4}x_4^{11}x_5 $\cr  $145.\ x_1^{3}x_2x_3^{5}x_4x_5^{10}$& $146.\ x_1^{3}x_2x_3^{5}x_4^{2}x_5^{9}$& $147.\ x_1^{3}x_2x_3^{5}x_4^{3}x_5^{8}$& $148.\ x_1^{3}x_2x_3^{5}x_4^{8}x_5^{3} $\cr  $149.\ x_1^{3}x_2x_3^{5}x_4^{9}x_5^{2}$& $150.\ x_1^{3}x_2x_3^{5}x_4^{10}x_5$& $151.\ x_1^{3}x_2x_3^{6}x_4x_5^{9}$& $152.\ x_1^{3}x_2x_3^{6}x_4^{9}x_5 $\cr  $153.\ x_1^{3}x_2x_3^{7}x_4x_5^{8}$& $154.\ x_1^{3}x_2x_3^{7}x_4^{8}x_5$& $155.\ x_1^{3}x_2x_3^{12}x_4x_5^{3}$& $156.\ x_1^{3}x_2x_3^{12}x_4^{3}x_5 $\cr  $157.\ x_1^{3}x_2x_3^{13}x_4x_5^{2}$& $158.\ x_1^{3}x_2x_3^{13}x_4^{2}x_5$& $159.\ x_1^{3}x_2x_3^{14}x_4x_5$& $160.\ x_1^{3}x_2^{3}x_3x_4x_5^{12} $\cr  $161.\ x_1^{3}x_2^{3}x_3x_4^{4}x_5^{9}$& $162.\ x_1^{3}x_2^{3}x_3x_4^{5}x_5^{8}$& $163.\ x_1^{3}x_2^{3}x_3x_4^{12}x_5$& $164.\ x_1^{3}x_2^{3}x_3^{4}x_4x_5^{9} $\cr  $165.\ x_1^{3}x_2^{3}x_3^{4}x_4^{9}x_5$& $166.\ x_1^{3}x_2^{3}x_3^{5}x_4x_5^{8}$& $167.\ x_1^{3}x_2^{3}x_3^{5}x_4^{8}x_5$& $168.\ x_1^{3}x_2^{3}x_3^{12}x_4x_5 $\cr  $169.\ x_1^{3}x_2^{4}x_3x_4x_5^{11}$& $170.\ x_1^{3}x_2^{4}x_3x_4^{3}x_5^{9}$& $171.\ x_1^{3}x_2^{4}x_3x_4^{11}x_5$& $172.\ x_1^{3}x_2^{4}x_3^{3}x_4x_5^{9} $\cr  $173.\ x_1^{3}x_2^{4}x_3^{3}x_4^{9}x_5$& $174.\ x_1^{3}x_2^{4}x_3^{11}x_4x_5$& $175.\ x_1^{3}x_2^{5}x_3x_4x_5^{10}$& $176.\ x_1^{3}x_2^{5}x_3x_4^{2}x_5^{9} $\cr  $177.\ x_1^{3}x_2^{5}x_3x_4^{3}x_5^{8}$& $178.\ x_1^{3}x_2^{5}x_3x_4^{8}x_5^{3}$& $179.\ x_1^{3}x_2^{5}x_3x_4^{9}x_5^{2}$& $180.\ x_1^{3}x_2^{5}x_3x_4^{10}x_5 $\cr  $181.\ x_1^{3}x_2^{5}x_3^{2}x_4x_5^{9}$& $182.\ x_1^{3}x_2^{5}x_3^{2}x_4^{9}x_5$& $183.\ x_1^{3}x_2^{5}x_3^{3}x_4x_5^{8}$& $184.\ x_1^{3}x_2^{5}x_3^{3}x_4^{8}x_5 $\cr  $185.\ x_1^{3}x_2^{5}x_3^{8}x_4x_5^{3}$& $186.\ x_1^{3}x_2^{5}x_3^{8}x_4^{3}x_5$& $187.\ x_1^{3}x_2^{5}x_3^{9}x_4x_5^{2}$& $188.\ x_1^{3}x_2^{5}x_3^{9}x_4^{2}x_5 $\cr  $189.\ x_1^{3}x_2^{5}x_3^{10}x_4x_5$& $190.\ x_1^{3}x_2^{7}x_3x_4x_5^{8}$& $191.\ x_1^{3}x_2^{7}x_3x_4^{8}x_5$& $192.\ x_1^{3}x_2^{7}x_3^{8}x_4x_5 $\cr  $193.\ x_1^{3}x_2^{13}x_3x_4x_5^{2}$& $194.\ x_1^{3}x_2^{13}x_3x_4^{2}x_5$& $195.\ x_1^{3}x_2^{13}x_3^{2}x_4x_5$& $196.\ x_1^{7}x_2x_3x_4x_5^{10} $\cr  $197.\ x_1^{7}x_2x_3x_4^{2}x_5^{9}$& $198.\ x_1^{7}x_2x_3x_4^{3}x_5^{8}$& $199.\ x_1^{7}x_2x_3x_4^{8}x_5^{3}$& $200.\ x_1^{7}x_2x_3x_4^{9}x_5^{2} $\cr  $201.\ x_1^{7}x_2x_3x_4^{10}x_5$& $202.\ x_1^{7}x_2x_3^{2}x_4x_5^{9}$& $203.\ x_1^{7}x_2x_3^{2}x_4^{9}x_5$& $204.\ x_1^{7}x_2x_3^{3}x_4x_5^{8} $\cr  $205.\ x_1^{7}x_2x_3^{3}x_4^{8}x_5$& $206.\ x_1^{7}x_2x_3^{8}x_4x_5^{3}$& $207.\ x_1^{7}x_2x_3^{8}x_4^{3}x_5$& $208.\ x_1^{7}x_2x_3^{9}x_4x_5^{2} $\cr  $209.\ x_1^{7}x_2x_3^{9}x_4^{2}x_5$& $210.\ x_1^{7}x_2x_3^{10}x_4x_5$& $211.\ x_1^{7}x_2^{3}x_3x_4x_5^{8}$& $212.\ x_1^{7}x_2^{3}x_3x_4^{8}x_5 $\cr  $213.\ x_1^{7}x_2^{3}x_3^{8}x_4x_5$& $214.\ x_1^{7}x_2^{9}x_3x_4x_5^{2}$& $215.\ x_1^{7}x_2^{9}x_3x_4^{2}x_5$& $216.\ x_1^{3}x_2^{4}x_3x_4^{9}x_5^{3} $\cr  $217.\ x_1^{3}x_2^{4}x_3^{9}x_4x_5^{3}$& $218.\ x_1^{3}x_2^{4}x_3^{9}x_4^{3}x_5$& $219.\ x_1^{3}x_2^{12}x_3x_4x_5^{3}$& $220.\ x_1^{3}x_2^{12}x_3x_4^{3}x_5 $\cr  $221.\ x_1^{3}x_2^{12}x_3^{3}x_4x_5$& $222.\ x_1^{7}x_2^{8}x_3x_4x_5^{3}$& $223.\ x_1^{7}x_2^{8}x_3x_4^{3}x_5$& $224.\ x_1^{7}x_2^{8}x_3^{3}x_4x_5 $\cr  $225.\ x_1^{7}x_2^{9}x_3^{2}x_4x_5$& 
\end{tabular}} 
\centerline{\begin{tabular}{llll}
\end{tabular}}

\medskip 
We have $B_5^+(4,2,3) = \{b_t: 1 \leqslant t \leqslant 50\},$ where

\medskip
\centerline{\begin{tabular}{llll}
$1.\ x_1x_2^{2}x_3^{5}x_4^{5}x_5^{7}$& $2.\ x_1x_2^{2}x_3^{5}x_4^{7}x_5^{5}$& $3.\ x_1x_2^{2}x_3^{7}x_4^{5}x_5^{5}$& $4.\ x_1x_2^{7}x_3^{2}x_4^{5}x_5^{5} $\cr  $5.\ x_1^{7}x_2x_3^{2}x_4^{5}x_5^{5}$& $6.\ x_1x_2^{3}x_3^{4}x_4^{5}x_5^{7}$& $7.\ x_1x_2^{3}x_3^{4}x_4^{7}x_5^{5}$& $8.\ x_1x_2^{3}x_3^{5}x_4^{4}x_5^{7} $\cr  $9.\ x_1x_2^{3}x_3^{5}x_4^{7}x_5^{4}$& $10.\ x_1x_2^{3}x_3^{7}x_4^{4}x_5^{5}$& $11.\ x_1x_2^{3}x_3^{7}x_4^{5}x_5^{4}$& $12.\ x_1x_2^{7}x_3^{3}x_4^{4}x_5^{5} $\cr  $13.\ x_1x_2^{7}x_3^{3}x_4^{5}x_5^{4}$& $14.\ x_1^{3}x_2x_3^{4}x_4^{5}x_5^{7}$& $15.\ x_1^{3}x_2x_3^{4}x_4^{7}x_5^{5}$& $16.\ x_1^{3}x_2x_3^{5}x_4^{4}x_5^{7} $\cr  $17.\ x_1^{3}x_2x_3^{5}x_4^{7}x_5^{4}$& $18.\ x_1^{3}x_2x_3^{7}x_4^{4}x_5^{5}$& $19.\ x_1^{3}x_2x_3^{7}x_4^{5}x_5^{4}$& $20.\ x_1^{3}x_2^{5}x_3x_4^{4}x_5^{7} $\cr  $21.\ x_1^{3}x_2^{5}x_3x_4^{7}x_5^{4}$& $22.\ x_1^{3}x_2^{5}x_3^{7}x_4x_5^{4}$& $23.\ x_1^{3}x_2^{7}x_3x_4^{4}x_5^{5}$& $24.\ x_1^{3}x_2^{7}x_3x_4^{5}x_5^{4} $\cr  $25.\ x_1^{3}x_2^{7}x_3^{5}x_4x_5^{4}$& $26.\ x_1^{7}x_2x_3^{3}x_4^{4}x_5^{5}$& $27.\ x_1^{7}x_2x_3^{3}x_4^{5}x_5^{4}$& $28.\ x_1^{7}x_2^{3}x_3x_4^{4}x_5^{5} $\cr  $29.\ x_1^{7}x_2^{3}x_3x_4^{5}x_5^{4}$& $30.\ x_1^{7}x_2^{3}x_3^{5}x_4x_5^{4}$& $31.\ x_1x_2^{3}x_3^{5}x_4^{5}x_5^{6}$& $32.\ x_1x_2^{3}x_3^{5}x_4^{6}x_5^{5} $\cr  $33.\ x_1x_2^{3}x_3^{6}x_4^{5}x_5^{5}$& $34.\ x_1x_2^{6}x_3^{3}x_4^{5}x_5^{5}$& $35.\ x_1^{3}x_2x_3^{5}x_4^{5}x_5^{6}$& $36.\ x_1^{3}x_2x_3^{5}x_4^{6}x_5^{5} $\cr  $37.\ x_1^{3}x_2x_3^{6}x_4^{5}x_5^{5}$& $38.\ x_1^{3}x_2^{5}x_3x_4^{5}x_5^{6}$& $39.\ x_1^{3}x_2^{5}x_3x_4^{6}x_5^{5}$& $40.\ x_1^{3}x_2^{5}x_3^{5}x_4x_5^{6} $\cr  $41.\ x_1^{3}x_2^{5}x_3^{2}x_4^{5}x_5^{5}$& $42.\ x_1^{3}x_2^{5}x_3^{5}x_4^{5}x_5^{2}$& $43.\ x_1^{3}x_2^{3}x_3^{4}x_4^{5}x_5^{5}$& $44.\ x_1^{3}x_2^{3}x_3^{5}x_4^{4}x_5^{5} $\cr  
\end{tabular}} 
\centerline{\begin{tabular}{llll}
$45.\ x_1^{3}x_2^{3}x_3^{5}x_4^{5}x_5^{4}$& $46.\ x_1^{3}x_2^{4}x_3^{3}x_4^{5}x_5^{5}$& $47.\ x_1^{3}x_2^{5}x_3^{3}x_4^{4}x_5^{5}$& $48.\ x_1^{3}x_2^{5}x_3^{3}x_4^{5}x_5^{4} $\cr  $49.\ x_1^{3}x_2^{5}x_3^{5}x_4^{3}x_5^{4}$& $50.\ x_1^{3}x_2^{5}x_3^{5}x_4^{4}x_5^{3}$& &\cr 
\end{tabular}}

\medskip
Note that these monomials have also been explicitly determined in \cite{p24}.

\medskip
$B_5^+(4,4,2) = \{c_t: 1 \leqslant t \leqslant 91\},$ where

\medskip
\centerline{\begin{tabular}{llll}
$1.\ x_1x_2^{2}x_3^{3}x_4^{7}x_5^{7}$& $2.\ x_1x_2^{2}x_3^{7}x_4^{3}x_5^{7}$& $3.\ x_1x_2^{2}x_3^{7}x_4^{7}x_5^{3}$& $4.\ x_1x_2^{3}x_3^{2}x_4^{7}x_5^{7} $\cr  $5.\ x_1x_2^{3}x_3^{7}x_4^{2}x_5^{7}$& $6.\ x_1x_2^{3}x_3^{7}x_4^{7}x_5^{2}$& $7.\ x_1x_2^{7}x_3^{2}x_4^{3}x_5^{7}$& $8.\ x_1x_2^{7}x_3^{2}x_4^{7}x_5^{3} $\cr  $9.\ x_1x_2^{7}x_3^{3}x_4^{2}x_5^{7}$& $10.\ x_1x_2^{7}x_3^{3}x_4^{7}x_5^{2}$& $11.\ x_1x_2^{7}x_3^{7}x_4^{2}x_5^{3}$& $12.\ x_1x_2^{7}x_3^{7}x_4^{3}x_5^{2} $\cr  $13.\ x_1^{3}x_2x_3^{2}x_4^{7}x_5^{7}$& $14.\ x_1^{3}x_2x_3^{7}x_4^{2}x_5^{7}$& $15.\ x_1^{3}x_2x_3^{7}x_4^{7}x_5^{2}$& $16.\ x_1^{3}x_2^{7}x_3x_4^{2}x_5^{7} $\cr  $17.\ x_1^{3}x_2^{7}x_3x_4^{7}x_5^{2}$& $18.\ x_1^{3}x_2^{7}x_3^{7}x_4x_5^{2}$& $19.\ x_1^{7}x_2x_3^{2}x_4^{3}x_5^{7}$& $20.\ x_1^{7}x_2x_3^{2}x_4^{7}x_5^{3} $\cr  $21.\ x_1^{7}x_2x_3^{3}x_4^{2}x_5^{7}$& $22.\ x_1^{7}x_2x_3^{3}x_4^{7}x_5^{2}$& $23.\ x_1^{7}x_2x_3^{7}x_4^{2}x_5^{3}$& $24.\ x_1^{7}x_2x_3^{7}x_4^{3}x_5^{2} $\cr  $25.\ x_1^{7}x_2^{3}x_3x_4^{2}x_5^{7}$& $26.\ x_1^{7}x_2^{3}x_3x_4^{7}x_5^{2}$& $27.\ x_1^{7}x_2^{3}x_3^{7}x_4x_5^{2}$& $28.\ x_1^{7}x_2^{7}x_3x_4^{2}x_5^{3} $\cr  $29.\ x_1^{7}x_2^{7}x_3x_4^{3}x_5^{2}$& $30.\ x_1^{7}x_2^{7}x_3^{3}x_4x_5^{2}$& $31.\ x_1x_2^{3}x_3^{3}x_4^{6}x_5^{7}$& $32.\ x_1x_2^{3}x_3^{3}x_4^{7}x_5^{6} $\cr  $33.\ x_1x_2^{3}x_3^{6}x_4^{3}x_5^{7}$& $34.\ x_1x_2^{3}x_3^{6}x_4^{7}x_5^{3}$& $35.\ x_1x_2^{3}x_3^{7}x_4^{3}x_5^{6}$& $36.\ x_1x_2^{3}x_3^{7}x_4^{6}x_5^{3} $\cr  $37.\ x_1x_2^{6}x_3^{3}x_4^{3}x_5^{7}$& $38.\ x_1x_2^{6}x_3^{3}x_4^{7}x_5^{3}$& $39.\ x_1x_2^{6}x_3^{7}x_4^{3}x_5^{3}$& $40.\ x_1x_2^{7}x_3^{3}x_4^{3}x_5^{6} $\cr  $41.\ x_1x_2^{7}x_3^{3}x_4^{6}x_5^{3}$& $42.\ x_1x_2^{7}x_3^{6}x_4^{3}x_5^{3}$& $43.\ x_1^{3}x_2x_3^{3}x_4^{6}x_5^{7}$& $44.\ x_1^{3}x_2x_3^{3}x_4^{7}x_5^{6} $\cr  $45.\ x_1^{3}x_2x_3^{6}x_4^{3}x_5^{7}$& $46.\ x_1^{3}x_2x_3^{6}x_4^{7}x_5^{3}$& $47.\ x_1^{3}x_2x_3^{7}x_4^{3}x_5^{6}$& $48.\ x_1^{3}x_2x_3^{7}x_4^{6}x_5^{3} $\cr  $49.\ x_1^{3}x_2^{3}x_3x_4^{6}x_5^{7}$& $50.\ x_1^{3}x_2^{3}x_3x_4^{7}x_5^{6}$& $51.\ x_1^{3}x_2^{3}x_3^{7}x_4x_5^{6}$& $52.\ x_1^{3}x_2^{7}x_3x_4^{3}x_5^{6} $\cr  $53.\ x_1^{3}x_2^{7}x_3x_4^{6}x_5^{3}$& $54.\ x_1^{3}x_2^{7}x_3^{3}x_4x_5^{6}$& $55.\ x_1^{7}x_2x_3^{3}x_4^{3}x_5^{6}$& $56.\ x_1^{7}x_2x_3^{3}x_4^{6}x_5^{3} $\cr  $57.\ x_1^{7}x_2x_3^{6}x_4^{3}x_5^{3}$& $58.\ x_1^{7}x_2^{3}x_3x_4^{3}x_5^{6}$& $59.\ x_1^{7}x_2^{3}x_3x_4^{6}x_5^{3}$& $60.\ x_1^{7}x_2^{3}x_3^{3}x_4x_5^{6} $\cr  $61.\ x_1^{3}x_2^{3}x_3^{5}x_4^{2}x_5^{7}$& $62.\ x_1^{3}x_2^{3}x_3^{5}x_4^{7}x_5^{2}$& $63.\ x_1^{3}x_2^{3}x_3^{7}x_4^{5}x_5^{2}$& $64.\ x_1^{3}x_2^{5}x_3^{2}x_4^{3}x_5^{7} $\cr  $65.\ x_1^{3}x_2^{5}x_3^{2}x_4^{7}x_5^{3}$& $66.\ x_1^{3}x_2^{5}x_3^{3}x_4^{2}x_5^{7}$& $67.\ x_1^{3}x_2^{5}x_3^{3}x_4^{7}x_5^{2}$& $68.\ x_1^{3}x_2^{5}x_3^{7}x_4^{2}x_5^{3} $\cr  $69.\ x_1^{3}x_2^{5}x_3^{7}x_4^{3}x_5^{2}$& $70.\ x_1^{3}x_2^{7}x_3^{3}x_4^{5}x_5^{2}$& $71.\ x_1^{3}x_2^{7}x_3^{5}x_4^{2}x_5^{3}$& $72.\ x_1^{3}x_2^{7}x_3^{5}x_4^{3}x_5^{2} $\cr  $73.\ x_1^{7}x_2^{3}x_3^{3}x_4^{5}x_5^{2}$& $74.\ x_1^{7}x_2^{3}x_3^{5}x_4^{2}x_5^{3}$& $75.\ x_1^{7}x_2^{3}x_3^{5}x_4^{3}x_5^{2}$& $76.\ x_1^{3}x_2^{3}x_3^{3}x_4^{4}x_5^{7} $\cr  $77.\ x_1^{3}x_2^{3}x_3^{3}x_4^{7}x_5^{4}$& $78.\ x_1^{3}x_2^{3}x_3^{4}x_4^{3}x_5^{7}$& $79.\ x_1^{3}x_2^{3}x_3^{4}x_4^{7}x_5^{3}$& $80.\ x_1^{3}x_2^{3}x_3^{7}x_4^{3}x_5^{4} $\cr  $81.\ x_1^{3}x_2^{3}x_3^{7}x_4^{4}x_5^{3}$& $82.\ x_1^{3}x_2^{7}x_3^{3}x_4^{3}x_5^{4}$& $83.\ x_1^{3}x_2^{7}x_3^{3}x_4^{4}x_5^{3}$& $84.\ x_1^{7}x_2^{3}x_3^{3}x_4^{3}x_5^{4} $\cr  $85.\ x_1^{7}x_2^{3}x_3^{3}x_4^{4}x_5^{3}$& $86.\ x_1^{3}x_2^{3}x_3^{3}x_4^{5}x_5^{6}$& $87.\ x_1^{3}x_2^{3}x_3^{5}x_4^{3}x_5^{6}$& $88.\ x_1^{3}x_2^{3}x_3^{5}x_4^{6}x_5^{3} $\cr  $89.\ x_1^{3}x_2^{5}x_3^{3}x_4^{3}x_5^{6}$& $90.\ x_1^{3}x_2^{5}x_3^{3}x_4^{6}x_5^{3}$& $91.\ x_1^{3}x_2^{5}x_3^{6}x_4^{3}x_5^{3}$& \cr  
\end{tabular}} 

\subsection{A basis of the space $(\widetilde{SF}_5)_{20}$}\label{s51}\

\smallskip
We have $(\widetilde{SF}_5)_{20} = \langle [p_u]: 1 \leqslant u \leqslant 11\rangle$, where
\begin{align*}
g_1 &= x_1x_2x_3x_4^{3}x_5^{14} + x_1x_2x_3x_4^{14}x_5^{3} + x_1x_2x_3^{6}x_4^{3}x_5^{9} + x_1x_2x_3^{6}x_4^{9}x_5^{3}\\ &\quad + x_1x_2^{6}x_3x_4^{3}x_5^{9} + x_1x_2^{6}x_3x_4^{9}x_5^{3} + x_1^{3}x_2x_3x_4^{3}x_5^{12} + x_1^{3}x_2x_3x_4^{5}x_5^{10}\\ &\quad + x_1^{3}x_2x_3x_4^{6}x_5^{9} + x_1^{3}x_2x_3x_4^{12}x_5^{3} + x_1^{3}x_2x_3^{4}x_4^{3}x_5^{9} + x_1^{3}x_2x_3^{4}x_4^{9}x_5^{3}\\ &\quad + x_1^{3}x_2^{4}x_3x_4^{3}x_5^{9} + x_1^{3}x_2^{4}x_3x_4^{9}x_5^{3},\\
g_2 &= x_1x_2x_3^{3}x_4x_5^{14} + x_1x_2x_3^{3}x_4^{12}x_5^{3} + x_1x_2x_3^{14}x_4x_5^{3} + x_1x_2^{6}x_3^{3}x_4x_5^{9}\\ &\quad + x_1x_2^{6}x_3^{9}x_4x_5^{3} + x_1^{3}x_2x_3^{3}x_4x_5^{12} + x_1^{3}x_2x_3^{3}x_4^{4}x_5^{9} + x_1^{3}x_2x_3^{5}x_4x_5^{10}\\ &\quad + x_1^{3}x_2x_3^{5}x_4^{8}x_5^{3} + x_1^{3}x_2x_3^{6}x_4x_5^{9} + x_1^{3}x_2x_3^{12}x_4x_5^{3} + x_1^{3}x_2^{4}x_3^{3}x_4x_5^{9}\\ &\quad + x_1^{3}x_2^{4}x_3^{9}x_4x_5^{3},\\
g_3 & = x_1x_2x_3^{3}x_4^{3}x_5^{12} + x_1x_2x_3^{3}x_4^{14}x_5 + x_1x_2x_3^{14}x_4^{3}x_5 + x_1x_2^{6}x_3^{3}x_4^{9}x_5\\ &\quad + x_1x_2^{6}x_3^{9}x_4^{3}x_5 + x_1^{3}x_2x_3^{3}x_4^{5}x_5^{8} + x_1^{3}x_2x_3^{3}x_4^{12}x_5 + x_1^{3}x_2x_3^{5}x_4^{3}x_5^{8}\\ &\quad + x_1^{3}x_2x_3^{5}x_4^{10}x_5 + x_1^{3}x_2x_3^{6}x_4^{9}x_5 + x_1^{3}x_2x_3^{12}x_4^{3}x_5 + x_1^{3}x_2^{4}x_3^{3}x_4^{9}x_5\\ &\quad + x_1^{3}x_2^{4}x_3^{9}x_4^{3}x_5,\\
g_4 &= x_1x_2^{3}x_3x_4x_5^{14} + x_1x_2^{3}x_3x_4^{12}x_5^{3} + x_1x_2^{3}x_3^{12}x_4x_5^{3} + x_1x_2^{14}x_3x_4x_5^{3}\\ &\quad + x_1^{3}x_2^{3}x_3x_4x_5^{12} + x_1^{3}x_2^{3}x_3x_4^{4}x_5^{9} + x_1^{3}x_2^{3}x_3^{4}x_4x_5^{9} + x_1^{3}x_2^{5}x_3x_4^{2}x_5^{9}\\ &\quad + x_1^{3}x_2^{5}x_3x_4^{8}x_5^{3} + x_1^{3}x_2^{5}x_3^{2}x_4x_5^{9} + x_1^{3}x_2^{5}x_3^{8}x_4x_5^{3} + x_1^{3}x_2^{12}x_3x_4x_5^{3},\\
g_5 &= x_1x_2^{3}x_3x_4^{3}x_5^{12} + x_1x_2^{3}x_3x_4^{14}x_5 + x_1x_2^{3}x_3^{12}x_4^{3}x_5 + x_1x_2^{14}x_3x_4^{3}x_5\\ &\quad + x_1^{3}x_2^{3}x_3x_4^{5}x_5^{8} + x_1^{3}x_2^{3}x_3x_4^{12}x_5 + x_1^{3}x_2^{3}x_3^{4}x_4^{9}x_5 + x_1^{3}x_2^{5}x_3x_4^{3}x_5^{8}\\ &\quad + x_1^{3}x_2^{5}x_3x_4^{9}x_5^{2} + x_1^{3}x_2^{5}x_3^{2}x_4^{9}x_5 + x_1^{3}x_2^{5}x_3^{8}x_4^{3}x_5 + x_1^{3}x_2^{12}x_3x_4^{3}x_5,\\
g_6 &= x_1x_2^{3}x_3^{3}x_4x_5^{12} + x_1x_2^{3}x_3^{3}x_4^{12}x_5 + x_1x_2^{3}x_3^{14}x_4x_5 + x_1x_2^{14}x_3^{3}x_4x_5\\ &\quad + x_1^{3}x_2^{3}x_3^{5}x_4x_5^{8} + x_1^{3}x_2^{3}x_3^{5}x_4^{8}x_5 + x_1^{3}x_2^{3}x_3^{12}x_4x_5 + x_1^{3}x_2^{5}x_3^{3}x_4x_5^{8}\\ &\quad + x_1^{3}x_2^{5}x_3^{3}x_4^{8}x_5 + x_1^{3}x_2^{5}x_3^{9}x_4x_5^{2} + x_1^{3}x_2^{5}x_3^{9}x_4^{2}x_5 + x_1^{3}x_2^{12}x_3^{3}x_4x_5,\\
g_7 & = x_1^{3}x_2x_3x_4x_5^{14} + x_1^{3}x_2x_3x_4^{12}x_5^{3} + x_1^{3}x_2x_3^{12}x_4x_5^{3} + x_1^{3}x_2^{12}x_3x_4x_5^{3}\\ &\quad + x_1^{7}x_2x_3x_4x_5^{10} + x_1^{7}x_2x_3x_4^{8}x_5^{3} + x_1^{7}x_2x_3^{8}x_4x_5^{3} + x_1^{7}x_2^{8}x_3x_4x_5^{3},\\
g_8 &= x_1^{3}x_2x_3x_4^{3}x_5^{12} + x_1^{3}x_2x_3x_4^{14}x_5 + x_1^{3}x_2x_3^{12}x_4^{3}x_5 + x_1^{3}x_2^{12}x_3x_4^{3}x_5\\ &\quad + x_1^{7}x_2x_3x_4^{3}x_5^{8} + x_1^{7}x_2x_3x_4^{10}x_5 + x_1^{7}x_2x_3^{8}x_4^{3}x_5 + x_1^{7}x_2^{8}x_3x_4^{3}x_5,\\
g_9 &= x_1^{3}x_2x_3^{3}x_4x_5^{12} + x_1^{3}x_2x_3^{3}x_4^{12}x_5 + x_1^{3}x_2x_3^{14}x_4x_5 + x_1^{3}x_2^{12}x_3^{3}x_4x_5\\ &\quad + x_1^{7}x_2x_3^{3}x_4x_5^{8} + x_1^{7}x_2x_3^{3}x_4^{8}x_5 + x_1^{7}x_2x_3^{10}x_4x_5 + x_1^{7}x_2^{8}x_3^{3}x_4x_5,\\
g_{10} &= x_1^{3}x_2^{3}x_3x_4x_5^{12} + x_1^{3}x_2^{3}x_3x_4^{12}x_5 + x_1^{3}x_2^{3}x_3^{12}x_4x_5 + x_1^{3}x_2^{13}x_3x_4x_5^{2}\\ &\quad + x_1^{3}x_2^{13}x_3x_4^{2}x_5 + x_1^{3}x_2^{13}x_3^{2}x_4x_5 + x_1^{7}x_2^{3}x_3x_4x_5^{8} + x_1^{7}x_2^{3}x_3x_4^{8}x_5\\ &\quad + x_1^{7}x_2^{3}x_3^{8}x_4x_5 + x_1^{7}x_2^{9}x_3x_4x_5^{2} + x_1^{7}x_2^{9}x_3x_4^{2}x_5 + x_1^{7}x_2^{9}x_3^{2}x_4x_5,\\
g_{11} &= x_1x_2^{3}x_3^{5}x_4^{5}x_5^{6} + x_1x_2^{3}x_3^{5}x_4^{6}x_5^{5} + x_1x_2^{3}x_3^{6}x_4^{5}x_5^{5} + x_1x_2^{6}x_3^{3}x_4^{5}x_5^{5}\\ &\quad + x_1^{3}x_2x_3^{5}x_4^{5}x_5^{6} + x_1^{3}x_2x_3^{5}x_4^{6}x_5^{5} + x_1^{3}x_2^{5}x_3x_4^{5}x_5^{6} + x_1^{3}x_2^{5}x_3x_4^{6}x_5^{5}\\ &\quad + x_1^{3}x_2^{3}x_3^{4}x_4^{5}x_5^{5} + x_1^{3}x_2^{3}x_3^{5}x_4^{4}x_5^{5} + x_1^{3}x_2^{3}x_3^{5}x_4^{5}x_5^{4} + x_1^{3}x_2^{4}x_3^{3}x_4^{5}x_5^{5}\\ &\quad + x_1^{3}x_2^{5}x_3^{3}x_4^{4}x_5^{5} + x_1^{3}x_2^{5}x_3^{3}x_4^{5}x_5^{4} + x_1^{3}x_2^{5}x_3^{5}x_4^{3}x_5^{4} + x_1^{3}x_2^{5}x_3^{5}x_4^{4}x_5^{3}.
\end{align*}

\subsection{$\Sigma_5$-invariants of degree 20}\label{s52}\

\medskip
$QP_5((4)|(2)|(1)|^2)^{\Sigma_5} = \langle [h_t] : 1 \leqslant t \leqslant 7 \rangle$, where
\begin{align*}
h_1 &= x_2x_3x_4^{15}x_5^{3} + x_2x_3x_4^{3}x_5^{15} + x_2x_3^{15}x_4x_5^{3} + x_2x_3^{15}x_4^{3}x_5 + x_2x_3^{3}x_4x_5^{15}\\ &\quad + x_2x_3^{3}x_4^{15}x_5 + x_2^{15}x_3x_4x_5^{3} + x_2^{15}x_3x_4^{3}x_5 + x_2^{15}x_3^{3}x_4x_5 + x_2^{3}x_3x_4x_5^{15}\\ &\quad + x_2^{3}x_3x_4^{15}x_5 + x_2^{3}x_3^{15}x_4x_5 + x_1x_3x_4^{15}x_5^{3} + x_1x_3x_4^{3}x_5^{15} + x_1x_3^{15}x_4x_5^{3}\\ &\quad + x_1x_3^{15}x_4^{3}x_5 + x_1x_3^{3}x_4x_5^{15} + x_1x_3^{3}x_4^{15}x_5 + x_1x_2x_4^{15}x_5^{3} + x_1x_2x_4^{3}x_5^{15}\\ &\quad + x_1x_2x_3^{15}x_5^{3} + x_1x_2x_3^{15}x_4^{3} + x_1x_2x_3^{3}x_5^{15} + x_1x_2x_3^{3}x_4^{15} + x_1x_2^{15}x_4x_5^{3}\\ &\quad + x_1x_2^{15}x_4^{3}x_5 + x_1x_2^{15}x_3x_5^{3} + x_1x_2^{15}x_3x_4^{3} + x_1x_2^{15}x_3^{3}x_5 + x_1x_2^{15}x_3^{3}x_4\\ &\quad + x_1x_2^{3}x_4x_5^{15} + x_1x_2^{3}x_4^{15}x_5 + x_1x_2^{3}x_3x_5^{15} + x_1x_2^{3}x_3x_4^{15} + x_1x_2^{3}x_3^{15}x_5\\ &\quad + x_1x_2^{3}x_3^{15}x_4 + x_1^{15}x_3x_4x_5^{3} + x_1^{15}x_3x_4^{3}x_5 + x_1^{15}x_3^{3}x_4x_5 + x_1^{15}x_2x_4x_5^{3}\\ &\quad + x_1^{15}x_2x_4^{3}x_5 + x_1^{15}x_2x_3x_5^{3} + x_1^{15}x_2x_3x_4^{3} + x_1^{15}x_2x_3^{3}x_5 + x_1^{15}x_2x_3^{3}x_4\\ &\quad + x_1^{15}x_2^{3}x_4x_5 + x_1^{15}x_2^{3}x_3x_5 + x_1^{15}x_2^{3}x_3x_4 + x_1^{3}x_3x_4x_5^{15} + x_1^{3}x_3x_4^{15}x_5\\ &\quad + x_1^{3}x_3^{15}x_4x_5 + x_1^{3}x_2x_4x_5^{15} + x_1^{3}x_2x_4^{15}x_5 + x_1^{3}x_2x_3x_5^{15} + x_1^{3}x_2x_3x_4^{15}\\ &\quad + x_1^{3}x_2x_3^{15}x_5 + x_1^{3}x_2x_3^{15}x_4 + x_1^{3}x_2^{15}x_4x_5 + x_1^{3}x_2^{15}x_3x_5 + x_1^{3}x_2^{15}x_3x_4,\\
h_2 &= x_2x_3x_4^{7}x_5^{11} + x_2x_3^{7}x_4x_5^{11} + x_2x_3^{7}x_4^{11}x_5 + x_2^{7}x_3x_4x_5^{11} + x_2^{7}x_3x_4^{11}x_5\\ &\quad + x_2^{7}x_3^{11}x_4x_5 + x_1x_3x_4^{7}x_5^{11} + x_1x_3^{7}x_4x_5^{11} + x_1x_3^{7}x_4^{11}x_5 + x_1x_2x_4^{7}x_5^{11}\\ &\quad + x_1x_2x_3^{7}x_5^{11} + x_1x_2x_3^{7}x_4^{11} + x_1x_2^{7}x_4x_5^{11} + x_1x_2^{7}x_4^{11}x_5 + x_1x_2^{7}x_3x_5^{11}\\ &\quad + x_1x_2^{7}x_3^{11}x_5 + x_1x_2^{7}x_3x_4^{11} + x_1x_2^{7}x_3^{11}x_4 + x_1^{7}x_3x_4x_5^{11} + x_1^{7}x_3x_4^{11}x_5\\ &\quad + x_1^{7}x_3^{11}x_4x_5 + x_1^{7}x_2x_4x_5^{11} + x_1^{7}x_2x_4^{11}x_5 + x_1^{7}x_2x_3x_5^{11} + x_1^{7}x_2^{11}x_4x_5\\ &\quad + x_1^{7}x_2x_3^{11}x_5 + x_1^{7}x_2^{11}x_3x_5 + x_1^{7}x_2x_3x_4^{11} + x_1^{7}x_2x_3^{11}x_4 + x_1^{7}x_2^{11}x_3x_4,\\
h_3 &= x_2x_3^{3}x_4^{13}x_5^{3} + x_2x_3^{3}x_4^{3}x_5^{13} + x_2^{3}x_3x_4^{13}x_5^{3} + x_2^{3}x_3x_4^{3}x_5^{13} + x_2^{3}x_3^{13}x_4x_5^{3}\\ &\quad + x_2^{3}x_3^{13}x_4^{3}x_5 + x_2^{3}x_3^{3}x_4x_5^{13} + x_2^{3}x_3^{3}x_4^{13}x_5 + x_1x_3^{3}x_4^{13}x_5^{3} + x_1x_3^{3}x_4^{3}x_5^{13}\\ &\quad + x_1x_2^{3}x_4^{13}x_5^{3} + x_1x_2^{3}x_4^{3}x_5^{13} + x_1x_2^{3}x_3^{13}x_5^{3} + x_1x_2^{3}x_3^{13}x_4^{3} + x_1x_2^{3}x_3^{3}x_5^{13}\\ &\quad + x_1x_2^{3}x_3^{3}x_4^{13} + x_1^{3}x_3x_4^{13}x_5^{3} + x_1^{3}x_3x_4^{3}x_5^{13} + x_1^{3}x_3^{13}x_4x_5^{3} + x_1^{3}x_3^{13}x_4^{3}x_5\\ &\quad + x_1^{3}x_3^{3}x_4x_5^{13} + x_1^{3}x_3^{3}x_4^{13}x_5 + x_1^{3}x_2x_4^{13}x_5^{3} + x_1^{3}x_2x_4^{3}x_5^{13} + x_1^{3}x_2x_3^{13}x_5^{3}\\ &\quad + x_1^{3}x_2x_3^{13}x_4^{3} + x_1^{3}x_2x_3^{3}x_5^{13} + x_1^{3}x_2^{13}x_4x_5^{3} + x_1^{3}x_2^{13}x_4^{3}x_5 + x_1^{3}x_2^{13}x_3x_5^{3}\\ &\quad + x_1^{3}x_2x_3^{3}x_4^{13} + x_1^{3}x_2^{13}x_3x_4^{3} + x_1^{3}x_2^{13}x_3^{3}x_5 + x_1^{3}x_2^{13}x_3^{3}x_4 + x_1^{3}x_2^{3}x_4x_5^{13}\\ &\quad + x_1^{3}x_2^{3}x_4^{13}x_5 + x_1^{3}x_2^{3}x_3x_5^{13} + x_1^{3}x_2^{3}x_3x_4^{13} + x_1^{3}x_2^{3}x_3^{13}x_5 + x_1^{3}x_2^{3}x_3^{13}x_4\\ &\quad + x_2x_3^{7}x_4^{3}x_5^{9} + x_2x_3^{7}x_4^{9}x_5^{3} + x_2^{7}x_3x_4^{3}x_5^{9} + x_2^{7}x_3x_4^{9}x_5^{3} + x_2^{7}x_3^{3}x_4x_5^{9}\\ &\quad + x_2^{7}x_3^{3}x_4^{9}x_5 + x_2^{7}x_3^{9}x_4x_5^{3} + x_2^{7}x_3^{9}x_4^{3}x_5 + x_1x_3^{7}x_4^{3}x_5^{9} + x_1x_3^{7}x_4^{9}x_5^{3}\\ &\quad + x_1x_2^{7}x_4^{3}x_5^{9} + x_1x_2^{7}x_4^{9}x_5^{3} + x_1x_2^{7}x_3^{3}x_5^{9} + x_1x_2^{7}x_3^{3}x_4^{9} + x_1x_2^{7}x_3^{9}x_5^{3}\\ &\quad + x_1x_2^{7}x_3^{9}x_4^{3} + x_1^{7}x_3x_4^{3}x_5^{9} + x_1^{7}x_3x_4^{9}x_5^{3} + x_1^{7}x_3^{3}x_4x_5^{9} + x_1^{7}x_3^{3}x_4^{9}x_5\\ &\quad + x_1^{7}x_3^{9}x_4x_5^{3} + x_1^{7}x_3^{9}x_4^{3}x_5 + x_1^{7}x_2x_4^{3}x_5^{9} + x_1^{7}x_2x_4^{9}x_5^{3} + x_1^{7}x_2x_3^{3}x_5^{9}\\ &\quad + x_1^{7}x_2x_3^{3}x_4^{9} + x_1^{7}x_2x_3^{9}x_5^{3} + x_1^{7}x_2x_3^{9}x_4^{3} + x_1^{7}x_2^{3}x_4x_5^{9} + x_1^{7}x_2^{3}x_4^{9}x_5\\ &\quad + x_1^{7}x_2^{3}x_3x_5^{9} + x_1^{7}x_2^{3}x_3x_4^{9} + x_1^{7}x_2^{3}x_3^{9}x_5 + x_1^{7}x_2^{3}x_3^{9}x_4 + x_1^{7}x_2^{9}x_4x_5^{3}\\ &\quad + x_1^{7}x_2^{9}x_4^{3}x_5 + x_1^{7}x_2^{9}x_3x_5^{3} + x_1^{7}x_2^{9}x_3x_4^{3} + x_1^{7}x_2^{9}x_3^{3}x_5 + x_1^{7}x_2^{9}x_3^{3}x_4,\\
h_4 &= x_2^{3}x_3^{3}x_4^{5}x_5^{9} + x_2^{3}x_3^{5}x_4^{3}x_5^{9} + x_2^{3}x_3^{5}x_4^{9}x_5^{3} + x_1^{3}x_3^{3}x_4^{5}x_5^{9} + x_1^{3}x_3^{5}x_4^{3}x_5^{9}\\ &\quad + x_1^{3}x_3^{5}x_4^{9}x_5^{3} + x_1^{3}x_2^{3}x_4^{5}x_5^{9} + x_1^{3}x_2^{3}x_3^{5}x_5^{9} + x_1^{3}x_2^{3}x_3^{5}x_4^{9} + x_1^{3}x_2^{5}x_4^{3}x_5^{9}\\ &\quad + x_1^{3}x_2^{5}x_4^{9}x_5^{3} + x_1^{3}x_2^{5}x_3^{3}x_5^{9} + x_1^{3}x_2^{5}x_3^{3}x_4^{9} + x_1^{3}x_2^{5}x_3^{9}x_5^{3} + x_1^{3}x_2^{5}x_3^{9}x_4^{3},\\
h_5 &= x_1x_2x_3x_4^{3}x_5^{14} + x_1x_2x_3x_4^{6}x_5^{11} + x_1x_2x_3x_4^{7}x_5^{10} + x_1x_2x_3x_4^{14}x_5^{3}\\ &\quad + x_1x_2x_3^{3}x_4x_5^{14} + x_1x_2x_3^{3}x_4^{14}x_5 + x_1x_2x_3^{6}x_4x_5^{11} + x_1x_2x_3^{6}x_4^{11}x_5\\ &\quad + x_1x_2x_3^{7}x_4x_5^{10} + x_1x_2x_3^{7}x_4^{10}x_5 + x_1x_2x_3^{14}x_4x_5^{3} + x_1x_2x_3^{14}x_4^{3}x_5\\ &\quad + x_1x_2^{3}x_3x_4x_5^{14} + x_1x_2^{3}x_3x_4^{14}x_5 + x_1x_2^{3}x_3^{14}x_4x_5 + x_1x_2^{6}x_3x_4x_5^{11}\\ &\quad + x_1x_2^{6}x_3x_4^{11}x_5 + x_1x_2^{6}x_3^{11}x_4x_5 + x_1x_2^{7}x_3x_4x_5^{10} + x_1x_2^{7}x_3x_4^{10}x_5\\ &\quad + x_1x_2^{7}x_3^{10}x_4x_5 + x_1x_2^{14}x_3x_4x_5^{3} + x_1x_2^{14}x_3x_4^{3}x_5 + x_1x_2^{14}x_3^{3}x_4x_5\\ &\quad + x_1^{3}x_2x_3x_4^{4}x_5^{11} + x_1^{3}x_2x_3x_4^{7}x_5^{8} + x_1^{3}x_2x_3^{4}x_4x_5^{11} + x_1^{3}x_2x_3^{4}x_4^{11}x_5\\ &\quad + x_1^{3}x_2x_3^{7}x_4x_5^{8} + x_1^{3}x_2x_3^{7}x_4^{8}x_5 + x_1^{3}x_2^{4}x_3x_4x_5^{11} + x_1^{3}x_2^{4}x_3x_4^{11}x_5\\ &\quad + x_1^{3}x_2^{4}x_3^{11}x_4x_5 + x_1^{3}x_2^{7}x_3x_4x_5^{8} + x_1^{3}x_2^{7}x_3x_4^{8}x_5 + x_1^{3}x_2^{7}x_3^{8}x_4x_5\\ &\quad + x_1^{7}x_2x_3x_4^{3}x_5^{8} + x_1^{7}x_2x_3x_4^{8}x_5^{3} + x_1^{7}x_2x_3^{3}x_4x_5^{8} + x_1^{7}x_2x_3^{3}x_4^{8}x_5\\ &\quad + x_1^{7}x_2x_3^{8}x_4x_5^{3} + x_1^{7}x_2x_3^{8}x_4^{3}x_5 + x_1^{7}x_2^{3}x_3x_4x_5^{8} + x_1^{7}x_2^{3}x_3x_4^{8}x_5\\ &\quad + x_1^{7}x_2^{3}x_3^{8}x_4x_5 + x_1^{7}x_2^{8}x_3x_4x_5^{3} + x_1^{7}x_2^{8}x_3x_4^{3}x_5 + x_1^{7}x_2^{8}x_3^{3}x_4x_5,\\
h_6 &= x_1x_2x_3x_4^{6}x_5^{11} + x_1x_2x_3x_4^{7}x_5^{10} + x_1x_2x_3^{6}x_4x_5^{11} + x_1x_2x_3^{6}x_4^{11}x_5\\ &\quad + x_1x_2x_3^{7}x_4x_5^{10} + x_1x_2x_3^{7}x_4^{10}x_5 + x_1x_2^{6}x_3x_4x_5^{11} + x_1x_2^{6}x_3x_4^{11}x_5\\ &\quad + x_1x_2^{6}x_3^{11}x_4x_5 + x_1x_2^{7}x_3x_4x_5^{10} + x_1x_2^{7}x_3x_4^{10}x_5 + x_1x_2^{7}x_3^{10}x_4x_5\\ &\quad + x_1^{3}x_2x_3x_4x_5^{14} + x_1^{3}x_2x_3x_4^{4}x_5^{11} + x_1^{3}x_2x_3x_4^{7}x_5^{8} + x_1^{3}x_2x_3x_4^{14}x_5\\ &\quad + x_1^{3}x_2x_3^{4}x_4x_5^{11} + x_1^{3}x_2x_3^{4}x_4^{11}x_5 + x_1^{3}x_2x_3^{7}x_4x_5^{8} + x_1^{3}x_2x_3^{7}x_4^{8}x_5\\ &\quad + x_1^{3}x_2x_3^{14}x_4x_5 + x_1^{3}x_2^{4}x_3x_4x_5^{11} + x_1^{3}x_2^{4}x_3x_4^{11}x_5 + x_1^{3}x_2^{4}x_3^{11}x_4x_5\\ &\quad + x_1^{3}x_2^{7}x_3x_4x_5^{8} + x_1^{3}x_2^{7}x_3x_4^{8}x_5 + x_1^{3}x_2^{7}x_3^{8}x_4x_5 + x_1^{3}x_2^{13}x_3x_4x_5^{2}\\ &\quad + x_1^{3}x_2^{13}x_3x_4^{2}x_5 + x_1^{3}x_2^{13}x_3^{2}x_4x_5 + x_1^{7}x_2x_3x_4x_5^{10} + x_1^{7}x_2x_3x_4^{10}x_5\\ &\quad + x_1^{7}x_2x_3^{10}x_4x_5 + x_1^{7}x_2^{9}x_3x_4x_5^{2} + x_1^{7}x_2^{9}x_3x_4^{2}x_5 + x_1^{7}x_2^{9}x_3^{2}x_4x_5,\\
h_7 &= x_1x_2x_3^{3}x_4^{3}x_5^{12} + x_1x_2x_3^{3}x_4^{12}x_5^{3} + x_1x_2x_3^{6}x_4^{3}x_5^{9} + x_1x_2x_3^{6}x_4^{9}x_5^{3}\\ &\quad + x_1x_2^{3}x_3x_4^{3}x_5^{12} + x_1x_2^{3}x_3x_4^{12}x_5^{3} + x_1x_2^{3}x_3^{3}x_4x_5^{12} + x_1x_2^{3}x_3^{3}x_4^{12}x_5\\ &\quad + x_1x_2^{3}x_3^{12}x_4x_5^{3} + x_1x_2^{3}x_3^{12}x_4^{3}x_5 + x_1x_2^{6}x_3x_4^{3}x_5^{9} + x_1x_2^{6}x_3x_4^{9}x_5^{3}\\ &\quad + x_1x_2^{6}x_3^{3}x_4x_5^{9} + x_1x_2^{6}x_3^{3}x_4^{9}x_5 + x_1x_2^{6}x_3^{9}x_4x_5^{3} + x_1x_2^{6}x_3^{9}x_4^{3}x_5\\ &\quad + x_1^{3}x_2x_3x_4^{5}x_5^{10} + x_1^{3}x_2x_3x_4^{6}x_5^{9} + x_1^{3}x_2x_3^{3}x_4^{4}x_5^{9} + x_1^{3}x_2x_3^{3}x_4^{5}x_5^{8}\\ &\quad + x_1^{3}x_2x_3^{4}x_4^{3}x_5^{9} + x_1^{3}x_2x_3^{4}x_4^{9}x_5^{3} + x_1^{3}x_2x_3^{5}x_4x_5^{10} + x_1^{3}x_2x_3^{5}x_4^{3}x_5^{8}\\ &\quad + x_1^{3}x_2x_3^{5}x_4^{8}x_5^{3} + x_1^{3}x_2x_3^{5}x_4^{10}x_5 + x_1^{3}x_2x_3^{6}x_4x_5^{9} + x_1^{3}x_2x_3^{6}x_4^{9}x_5\\ &\quad + x_1^{3}x_2^{3}x_3x_4^{4}x_5^{9} + x_1^{3}x_2^{3}x_3x_4^{5}x_5^{8} + x_1^{3}x_2^{3}x_3^{4}x_4x_5^{9} + x_1^{3}x_2^{3}x_3^{4}x_4^{9}x_5\\ &\quad + x_1^{3}x_2^{3}x_3^{5}x_4x_5^{8} + x_1^{3}x_2^{3}x_3^{5}x_4^{8}x_5 + x_1^{3}x_2^{4}x_3x_4^{3}x_5^{9} + x_1^{3}x_2^{4}x_3x_4^{9}x_5^{3}\\ &\quad + x_1^{3}x_2^{4}x_3^{3}x_4x_5^{9} + x_1^{3}x_2^{4}x_3^{3}x_4^{9}x_5 + x_1^{3}x_2^{4}x_3^{9}x_4x_5^{3} + x_1^{3}x_2^{4}x_3^{9}x_4^{3}x_5\\ &\quad + x_1^{3}x_2^{5}x_3x_4^{2}x_5^{9} + x_1^{3}x_2^{5}x_3x_4^{3}x_5^{8} + x_1^{3}x_2^{5}x_3x_4^{8}x_5^{3} + x_1^{3}x_2^{5}x_3x_4^{9}x_5^{2}\\ &\quad + x_1^{3}x_2^{5}x_3^{2}x_4x_5^{9} + x_1^{3}x_2^{5}x_3^{2}x_4^{9}x_5 + x_1^{3}x_2^{5}x_3^{3}x_4x_5^{8} + x_1^{3}x_2^{5}x_3^{3}x_4^{8}x_5\\ &\quad + x_1^{3}x_2^{5}x_3^{8}x_4x_5^{3} + x_1^{3}x_2^{5}x_3^{8}x_4^{3}x_5 + x_1^{3}x_2^{5}x_3^{9}x_4x_5^{2} + x_1^{3}x_2^{5}x_3^{9}x_4^{2}x_5.
\end{align*}

$QP_5((4)|(2)|(3))^{\Sigma_5} = \langle [h_t]_{\overline{\omega}} : 8 \leqslant t \leqslant 10 \rangle$, where $\overline{\omega} = (4)|(2)|(3)$ and
\begin{align*}
h_8 &= x_2^{3}x_3^{5}x_4^{5}x_5^{7} + x_2^{3}x_3^{5}x_4^{7}x_5^{5} + x_2^{3}x_3^{7}x_4^{5}x_5^{5} + x_2^{7}x_3^{3}x_4^{5}x_5^{5} + x_1^{3}x_3^{5}x_4^{5}x_5^{7}\\ &\quad + x_1^{3}x_3^{5}x_4^{7}x_5^{5} + x_1^{3}x_3^{7}x_4^{5}x_5^{5} + x_1^{3}x_2^{5}x_4^{5}x_5^{7} + x_1^{3}x_2^{5}x_4^{7}x_5^{5} + x_1^{3}x_2^{5}x_3^{5}x_5^{7}\\ &\quad + x_1^{3}x_2^{5}x_3^{5}x_4^{7} + x_1^{3}x_2^{5}x_3^{7}x_5^{5} + x_1^{3}x_2^{5}x_3^{7}x_4^{5} + x_1^{3}x_2^{7}x_4^{5}x_5^{5} + x_1^{3}x_2^{7}x_3^{5}x_5^{5}\\ &\quad + x_1^{3}x_2^{7}x_3^{5}x_4^{5} + x_1^{7}x_3^{3}x_4^{5}x_5^{5} + x_1^{7}x_2^{3}x_4^{5}x_5^{5} + x_1^{7}x_2^{3}x_3^{5}x_5^{5} + x_1^{7}x_2^{3}x_3^{5}x_4^{5},\\
h_9 &= x_1x_2^{2}x_3^{5}x_4^{5}x_5^{7} + x_1x_2^{2}x_3^{5}x_4^{7}x_5^{5} + x_1x_2^{2}x_3^{7}x_4^{5}x_5^{5} + x_1x_2^{7}x_3^{2}x_4^{5}x_5^{5}\\ &\quad + x_1^{7}x_2x_3^{2}x_4^{5}x_5^{5} + x_1x_2^{3}x_3^{4}x_4^{5}x_5^{7} + x_1x_2^{3}x_3^{4}x_4^{7}x_5^{5} + x_1x_2^{3}x_3^{5}x_4^{4}x_5^{7}\\ &\quad + x_1x_2^{3}x_3^{5}x_4^{7}x_5^{4} + x_1x_2^{3}x_3^{7}x_4^{4}x_5^{5} + x_1x_2^{3}x_3^{7}x_4^{5}x_5^{4} + x_1x_2^{7}x_3^{3}x_4^{4}x_5^{5}\\ &\quad + x_1x_2^{7}x_3^{3}x_4^{5}x_5^{4} + x_1^{3}x_2x_3^{4}x_4^{5}x_5^{7} + x_1^{3}x_2x_3^{4}x_4^{7}x_5^{5} + x_1^{3}x_2x_3^{5}x_4^{4}x_5^{7}\\ &\quad + x_1^{3}x_2x_3^{5}x_4^{7}x_5^{4} + x_1^{3}x_2x_3^{7}x_4^{4}x_5^{5} + x_1^{3}x_2x_3^{7}x_4^{5}x_5^{4} + x_1^{3}x_2^{5}x_3x_4^{4}x_5^{7}\\ &\quad + x_1^{3}x_2^{5}x_3x_4^{7}x_5^{4} + x_1^{3}x_2^{5}x_3^{7}x_4x_5^{4} + x_1^{3}x_2^{7}x_3x_4^{4}x_5^{5} + x_1^{3}x_2^{7}x_3x_4^{5}x_5^{4}\\ &\quad + x_1^{3}x_2^{7}x_3^{5}x_4x_5^{4} + x_1^{7}x_2x_3^{3}x_4^{4}x_5^{5} + x_1^{7}x_2x_3^{3}x_4^{5}x_5^{4} + x_1^{7}x_2^{3}x_3x_4^{4}x_5^{5}\\ &\quad + x_1^{7}x_2^{3}x_3x_4^{5}x_5^{4} + x_1^{7}x_2^{3}x_3^{5}x_4x_5^{4},\\
h_{10} &= x_1x_2^{3}x_3^{5}x_4^{5}x_5^{6} + x_1x_2^{3}x_3^{5}x_4^{6}x_5^{5} + x_1x_2^{3}x_3^{6}x_4^{5}x_5^{5} + x_1x_2^{6}x_3^{3}x_4^{5}x_5^{5}\\ &\quad + x_1^{3}x_2x_3^{5}x_4^{5}x_5^{6} + x_1^{3}x_2x_3^{5}x_4^{6}x_5^{5} + x_1^{3}x_2^{5}x_3x_4^{5}x_5^{6} + x_1^{3}x_2^{5}x_3x_4^{6}x_5^{5}\\ &\quad + x_1^{3}x_2^{3}x_3^{4}x_4^{5}x_5^{5} + x_1^{3}x_2^{3}x_3^{5}x_4^{4}x_5^{5} + x_1^{3}x_2^{3}x_3^{5}x_4^{5}x_5^{4} + x_1^{3}x_2^{4}x_3^{3}x_4^{5}x_5^{5}\\ &\quad + x_1^{3}x_2^{5}x_3^{3}x_4^{4}x_5^{5} + x_1^{3}x_2^{5}x_3^{3}x_4^{5}x_5^{4} + x_1^{3}x_2^{5}x_3^{5}x_4^{3}x_5^{4} + x_1^{3}x_2^{5}x_3^{5}x_4^{4}x_5^{3}.
\end{align*}

$QP_5((4)|^2|(2))^{\Sigma_5} = \langle [h_t]_{\omega^*} : 11 \leqslant t \leqslant 13 \rangle$, where $\omega^* = (4)|^2|(2)$ and 
\begin{align*}
h_{11} &= x_2^{3}x_3^{3}x_4^{7}x_5^{7} + x_2^{3}x_3^{7}x_4^{3}x_5^{7} + x_2^{3}x_3^{7}x_4^{7}x_5^{3} + x_2^{7}x_3^{3}x_4^{3}x_5^{7} + x_2^{7}x_3^{3}x_4^{7}x_5^{3}\\ &\quad + x_2^{7}x_3^{7}x_4^{3}x_5^{3} + x_1^{3}x_3^{3}x_4^{7}x_5^{7} + x_1^{3}x_3^{7}x_4^{3}x_5^{7} + x_1^{3}x_3^{7}x_4^{7}x_5^{3} + x_1^{3}x_2^{3}x_4^{7}x_5^{7}\\ &\quad + x_1^{3}x_2^{3}x_3^{7}x_5^{7} + x_1^{3}x_2^{3}x_3^{7}x_4^{7} + x_1^{3}x_2^{7}x_4^{3}x_5^{7} + x_1^{3}x_2^{7}x_4^{7}x_5^{3} + x_1^{3}x_2^{7}x_3^{3}x_5^{7}\\ &\quad + x_1^{3}x_2^{7}x_3^{3}x_4^{7} + x_1^{3}x_2^{7}x_3^{7}x_5^{3} + x_1^{3}x_2^{7}x_3^{7}x_4^{3} + x_1^{7}x_3^{3}x_4^{3}x_5^{7} + x_1^{7}x_3^{3}x_4^{7}x_5^{3}\\ &\quad + x_1^{7}x_3^{7}x_4^{3}x_5^{3} + x_1^{7}x_2^{3}x_4^{3}x_5^{7} + x_1^{7}x_2^{3}x_4^{7}x_5^{3} + x_1^{7}x_2^{3}x_3^{3}x_5^{7} + x_1^{7}x_2^{3}x_3^{3}x_4^{7}\\ &\quad + x_1^{7}x_2^{3}x_3^{7}x_5^{3} + x_1^{7}x_2^{3}x_3^{7}x_4^{3} + x_1^{7}x_2^{7}x_4^{3}x_5^{3} + x_1^{7}x_2^{7}x_3^{3}x_5^{3} + x_1^{7}x_2^{7}x_3^{3}x_4^{3},\\
h_{12} &= x_1x_2^{2}x_3^{3}x_4^{7}x_5^{7} + x_1x_2^{2}x_3^{7}x_4^{3}x_5^{7} + x_1x_2^{2}x_3^{7}x_4^{7}x_5^{3} + x_1x_2^{3}x_3^{2}x_4^{7}x_5^{7}\\ &\quad  + x_1x_2^{3}x_3^{7}x_4^{2}x_5^{7} + x_1x_2^{3}x_3^{7}x_4^{7}x_5^{2} + x_1x_2^{7}x_3^{2}x_4^{3}x_5^{7} + x_1x_2^{7}x_3^{2}x_4^{7}x_5^{3}\\ &\quad  + x_1x_2^{7}x_3^{3}x_4^{2}x_5^{7} + x_1x_2^{7}x_3^{3}x_4^{7}x_5^{2} + x_1x_2^{7}x_3^{7}x_4^{2}x_5^{3} + x_1x_2^{7}x_3^{7}x_4^{3}x_5^{2}\\ &\quad  + x_1^{3}x_2x_3^{2}x_4^{7}x_5^{7} + x_1^{3}x_2x_3^{7}x_4^{2}x_5^{7} + x_1^{3}x_2x_3^{7}x_4^{7}x_5^{2} + x_1^{3}x_2^{7}x_3x_4^{2}x_5^{7}\\ &\quad  + x_1^{3}x_2^{7}x_3x_4^{7}x_5^{2} + x_1^{3}x_2^{7}x_3^{7}x_4x_5^{2} + x_1^{7}x_2x_3^{2}x_4^{3}x_5^{7} + x_1^{7}x_2x_3^{2}x_4^{7}x_5^{3}\\ &\quad  + x_1^{7}x_2x_3^{3}x_4^{2}x_5^{7} + x_1^{7}x_2x_3^{3}x_4^{7}x_5^{2} + x_1^{7}x_2x_3^{7}x_4^{2}x_5^{3} + x_1^{7}x_2x_3^{7}x_4^{3}x_5^{2}\\ &\quad  + x_1^{7}x_2^{3}x_3x_4^{2}x_5^{7} + x_1^{7}x_2^{3}x_3x_4^{7}x_5^{2} + x_1^{7}x_2^{3}x_3^{7}x_4x_5^{2} + x_1^{7}x_2^{7}x_3x_4^{2}x_5^{3}\\ &\quad  + x_1^{7}x_2^{7}x_3x_4^{3}x_5^{2} + x_1^{7}x_2^{7}x_3^{3}x_4x_5^{2},\\
h_{13} &= x_1^{3}x_2^{3}x_3^{3}x_4^{5}x_5^{6} + x_1^{3}x_2^{3}x_3^{5}x_4^{3}x_5^{6} + x_1^{3}x_2^{3}x_3^{5}x_4^{6}x_5^{3} + x_1^{3}x_2^{5}x_3^{3}x_4^{3}x_5^{6}\\ &\quad + x_1^{3}x_2^{5}x_3^{3}x_4^{6}x_5^{3} + x_1^{3}x_2^{5}x_3^{6}x_4^{3}x_5^{3}.
\end{align*}

\subsection{$\Sigma_5$-invariants of $QP_5((2)|^4)$}\label{s53}\

\medskip
$QP_5((4)|(2)|(1)|^2)^{\Sigma_5} = \langle [p_t] : 1 \leqslant t \leqslant 9 \rangle$, where
\begin{align*}
p_1 &= x_4^{15}x_5^{15} + x_3^{15}x_5^{15} + x_3^{15}x_4^{15} + x_2^{15}x_5^{15} + x_2^{15}x_4^{15}\\ 
&\quad + x_2^{15}x_3^{15} + x_1^{15}x_5^{15} + x_1^{15}x_4^{15} + x_1^{15}x_3^{15} + x_1^{15}x_2^{15},\\
p_2 &= x_3x_4^{14}x_5^{15} + x_3x_4^{15}x_5^{14} + x_3^{15}x_4x_5^{14} + x_2x_4^{14}x_5^{15} + x_2x_4^{15}x_5^{14} + x_2x_3^{14}x_5^{15}\\ &\quad + x_2x_3^{14}x_4^{15} + x_2x_3^{15}x_5^{14} + x_2x_3^{15}x_4^{14} + x_2^{15}x_4x_5^{14} + x_2^{15}x_3x_5^{14} + x_2^{15}x_3x_4^{14}\\ &\quad + x_1x_4^{14}x_5^{15} + x_1x_4^{15}x_5^{14} + x_1x_3^{14}x_5^{15} + x_1x_3^{14}x_4^{15} + x_1x_3^{15}x_5^{14} + x_1x_3^{15}x_4^{14}\\ &\quad + x_1x_2^{14}x_5^{15} + x_1x_2^{14}x_4^{15} + x_1x_2^{14}x_3^{15} + x_1x_2^{15}x_5^{14} + x_1x_2^{15}x_4^{14} + x_1x_2^{15}x_3^{14}\\ &\quad + x_1^{15}x_4x_5^{14} + x_1^{15}x_3x_5^{14} + x_1^{15}x_3x_4^{14} + x_1^{15}x_2x_5^{14} + x_1^{15}x_2x_4^{14} + x_1^{15}x_2x_3^{14},\\
p_3 &= x_3^{3}x_4^{13}x_5^{14} + x_2^{3}x_4^{13}x_5^{14} + x_2^{3}x_3^{13}x_5^{14} + x_2^{3}x_3^{13}x_4^{14} + x_1^{3}x_4^{13}x_5^{14}\\ &\quad + x_1^{3}x_3^{13}x_5^{14} + x_1^{3}x_3^{13}x_4^{14} + x_1^{3}x_2^{13}x_5^{14} + x_1^{3}x_2^{13}x_4^{14} + x_1^{3}x_2^{13}x_3^{14},\\
p_4 &= x_2x_3x_4^{14}x_5^{14} + x_2x_3^{14}x_4x_5^{14} + x_2^{3}x_3^{5}x_4^{10}x_5^{12} + x_1x_3x_4^{14}x_5^{14} + x_1x_3^{14}x_4x_5^{14}\\ &\quad + x_1x_2x_4^{14}x_5^{14} + x_1x_2x_3^{14}x_5^{14} + x_1x_2x_3^{14}x_4^{14} + x_1x_2^{14}x_4x_5^{14} + x_1x_2^{14}x_3x_5^{14}\\ &\quad + x_1x_2^{14}x_3x_4^{14} + x_1^{3}x_3^{5}x_4^{10}x_5^{12} + x_1^{3}x_2^{5}x_4^{10}x_5^{12} + x_1^{3}x_2^{5}x_3^{10}x_5^{12} + x_1^{3}x_2^{5}x_3^{10}x_4^{12},\\
p_5 & = x_2x_3^{2}x_4^{13}x_5^{14} + x_2x_3^{3}x_4^{12}x_5^{14} + x_2x_3^{3}x_4^{14}x_5^{12} + x_2^{3}x_3x_4^{12}x_5^{14} + x_2^{3}x_3x_4^{14}x_5^{12}\\ &\quad + x_2^{3}x_3^{13}x_4^{2}x_5^{12} + x_1x_3^{2}x_4^{13}x_5^{14} + x_1x_3^{3}x_4^{12}x_5^{14} + x_1x_3^{3}x_4^{14}x_5^{12} + x_1x_2^{2}x_4^{13}x_5^{14}\\ &\quad + x_1x_2^{2}x_3^{13}x_5^{14} + x_1x_2^{2}x_3^{13}x_4^{14} + x_1x_2^{3}x_4^{12}x_5^{14} + x_1x_2^{3}x_4^{14}x_5^{12} + x_1x_2^{3}x_3^{12}x_5^{14}\\ &\quad + x_1x_2^{3}x_3^{12}x_4^{14} + x_1x_2^{3}x_3^{14}x_5^{12} + x_1x_2^{3}x_3^{14}x_4^{12} + x_1^{3}x_3x_4^{12}x_5^{14} + x_1^{3}x_3x_4^{14}x_5^{12}\\ &\quad + x_1^{3}x_3^{13}x_4^{2}x_5^{12} + x_1^{3}x_2x_4^{12}x_5^{14} + x_1^{3}x_2x_4^{14}x_5^{12} + x_1^{3}x_2x_3^{12}x_5^{14} + x_1^{3}x_2x_3^{12}x_4^{14}\\ &\quad + x_1^{3}x_2x_3^{14}x_5^{12} + x_1^{3}x_2x_3^{14}x_4^{12} + x_1^{3}x_2^{13}x_4^{2}x_5^{12} + x_1^{3}x_2^{13}x_3^{2}x_5^{12} + x_1^{3}x_2^{13}x_3^{2}x_4^{12},\\
p_6 &= x_2x_3^{2}x_4^{12}x_5^{15} + x_2x_3^{2}x_4^{15}x_5^{12} + x_2x_3^{15}x_4^{2}x_5^{12} + x_2^{15}x_3x_4^{2}x_5^{12} + x_1x_3^{2}x_4^{12}x_5^{15}\\ &\quad + x_1x_3^{2}x_4^{15}x_5^{12} + x_1x_3^{15}x_4^{2}x_5^{12} + x_1x_2^{2}x_4^{12}x_5^{15} + x_1x_2^{2}x_4^{15}x_5^{12} + x_1x_2^{2}x_3^{12}x_5^{15}\\ &\quad + x_1x_2^{2}x_3^{12}x_4^{15} + x_1x_2^{2}x_3^{15}x_5^{12} + x_1x_2^{2}x_3^{15}x_4^{12} + x_1x_2^{15}x_4^{2}x_5^{12} + x_1x_2^{15}x_3^{2}x_5^{12}\\ &\quad + x_1x_2^{15}x_3^{2}x_4^{12} + x_1^{15}x_3x_4^{2}x_5^{12} + x_1^{15}x_2x_4^{2}x_5^{12} + x_1^{15}x_2x_3^{2}x_5^{12} + x_1^{15}x_2x_3^{2}x_4^{12},\\
p_7 &= x_1x_2^{2}x_3^{4}x_4^{8}x_5^{15} + x_1x_2^{2}x_3^{4}x_4^{15}x_5^{8} + x_1x_2^{2}x_3^{15}x_4^{4}x_5^{8} + x_1x_2^{15}x_3^{2}x_4^{4}x_5^{8}\\ &\quad + x_1^{15}x_2x_3^{2}x_4^{4}x_5^{8},\\
p_8 &= x_1x_2x_3^{2}x_4^{14}x_5^{12} + x_1x_2x_3^{6}x_4^{10}x_5^{12} + x_1x_2x_3^{14}x_4^{2}x_5^{12} + x_1x_2^{2}x_3x_4^{12}x_5^{14}\\ &\quad + x_1x_2^{2}x_3^{5}x_4^{10}x_5^{12} + x_1x_2^{2}x_3^{12}x_4x_5^{14} + x_1x_2^{3}x_3^{2}x_4^{12}x_5^{12} + x_1x_2^{3}x_3^{4}x_4^{10}x_5^{12}\\ &\quad + x_1x_2^{3}x_3^{6}x_4^{8}x_5^{12} + x_1x_2^{3}x_3^{6}x_4^{12}x_5^{8} + x_1x_2^{14}x_3x_4^{2}x_5^{12} + x_1^{3}x_2x_3^{4}x_4^{10}x_5^{12}\\ &\quad + x_1^{3}x_2x_3^{6}x_4^{8}x_5^{12} + x_1^{3}x_2x_3^{6}x_4^{12}x_5^{8} + x_1^{3}x_2^{5}x_3^{2}x_4^{8}x_5^{12} + x_1^{3}x_2^{5}x_3^{2}x_4^{12}x_5^{8}\\ &\quad + x_1^{3}x_2^{5}x_3^{10}x_4^{4}x_5^{8},\\
p_9 &= x_1x_2^{2}x_3x_4^{12}x_5^{14} + x_1x_2^{2}x_3x_4^{14}x_5^{12} + x_1x_2^{2}x_3^{4}x_4^{9}x_5^{14} + x_1x_2^{2}x_3^{5}x_4^{8}x_5^{14}\\ &\quad + x_1x_2^{2}x_3^{5}x_4^{14}x_5^{8} + x_1x_2^{2}x_3^{12}x_4x_5^{14} + x_1x_2^{3}x_3^{4}x_4^{8}x_5^{14} + x_1x_2^{3}x_3^{4}x_4^{14}x_5^{8}\\ &\quad + x_1x_2^{3}x_3^{14}x_4^{4}x_5^{8} + x_1x_2^{14}x_3x_4^{2}x_5^{12} + x_1^{3}x_2x_3^{4}x_4^{8}x_5^{14} + x_1^{3}x_2x_3^{4}x_4^{14}x_5^{8}\\ &\quad + x_1^{3}x_2x_3^{14}x_4^{4}x_5^{8} + x_1^{3}x_2^{13}x_3^{2}x_4^{4}x_5^{8},
\end{align*}

\subsection{$\Sigma_5$-invariants of $QP_5((4)|(3)|^2|(1))$}\label{s54}\

\medskip
$QP_5((4)|((4)|(3)|^2|(1))^{\Sigma_5} = \langle [q_t]_{(4)|(3)|^2|(1)} : 1 \leqslant t \leqslant 7 \rangle$, where
\begin{align*}
q_1 &= x_2x_3^{7}x_4^{7}x_5^{15} + x_2x_3^{7}x_4^{15}x_5^{7} + x_2x_3^{15}x_4^{7}x_5^{7} + x_2^{7}x_3x_4^{7}x_5^{15} + x_2^{7}x_3x_4^{15}x_5^{7}\\ 
&\quad + x_2^{7}x_3^{7}x_4x_5^{15} + x_2^{7}x_3^{7}x_4^{15}x_5 + x_2^{7}x_3^{15}x_4x_5^{7} + x_2^{7}x_3^{15}x_4^{7}x_5 + x_2^{15}x_3x_4^{7}x_5^{7}\\ 
&\quad + x_2^{15}x_3^{7}x_4x_5^{7} + x_2^{15}x_3^{7}x_4^{7}x_5 + x_1x_3^{7}x_4^{7}x_5^{15} + x_1x_3^{7}x_4^{15}x_5^{7} + x_1x_3^{15}x_4^{7}x_5^{7}\\ 
&\quad + x_1x_2^{7}x_4^{7}x_5^{15} + x_1x_2^{7}x_4^{15}x_5^{7} + x_1x_2^{7}x_3^{7}x_5^{15} + x_1x_2^{7}x_3^{7}x_4^{15} + x_1x_2^{7}x_3^{15}x_5^{7}\\ 
&\quad + x_1x_2^{7}x_3^{15}x_4^{7} + x_1x_2^{15}x_4^{7}x_5^{7} + x_1x_2^{15}x_3^{7}x_5^{7} + x_1x_2^{15}x_3^{7}x_4^{7} + x_1^{7}x_3x_4^{7}x_5^{15}\\ 
&\quad + x_1^{7}x_3x_4^{15}x_5^{7} + x_1^{7}x_3^{7}x_4x_5^{15} + x_1^{7}x_3^{7}x_4^{15}x_5 + x_1^{7}x_3^{15}x_4x_5^{7} + x_1^{7}x_3^{15}x_4^{7}x_5\\ 
&\quad + x_1^{7}x_2x_4^{7}x_5^{15} + x_1^{7}x_2x_4^{15}x_5^{7} + x_1^{7}x_2x_3^{7}x_5^{15} + x_1^{7}x_2x_3^{7}x_4^{15} + x_1^{7}x_2x_3^{15}x_5^{7}\\ 
&\quad + x_1^{7}x_2x_3^{15}x_4^{7} + x_1^{7}x_2^{7}x_4x_5^{15} + x_1^{7}x_2^{7}x_4^{15}x_5 + x_1^{7}x_2^{7}x_3x_5^{15} + x_1^{7}x_2^{7}x_3x_4^{15}\\ 
&\quad + x_1^{7}x_2^{7}x_3^{15}x_5 + x_1^{7}x_2^{7}x_3^{15}x_4 + x_1^{7}x_2^{15}x_4x_5^{7} + x_1^{7}x_2^{15}x_4^{7}x_5 + x_1^{7}x_2^{15}x_3x_5^{7}\\ 
&\quad + x_1^{7}x_2^{15}x_3x_4^{7} + x_1^{7}x_2^{15}x_3^{7}x_5 + x_1^{7}x_2^{15}x_3^{7}x_4 + x_1^{15}x_3x_4^{7}x_5^{7} + x_1^{15}x_3^{7}x_4x_5^{7}\\ 
&\quad + x_1^{15}x_3^{7}x_4^{7}x_5 + x_1^{15}x_2x_4^{7}x_5^{7} + x_1^{15}x_2x_3^{7}x_5^{7} + x_1^{15}x_2x_3^{7}x_4^{7} + x_1^{15}x_2^{7}x_4x_5^{7}\\ 
&\quad + x_1^{15}x_2^{7}x_4^{7}x_5 + x_1^{15}x_2^{7}x_3x_5^{7} + x_1^{15}x_2^{7}x_3x_4^{7} + x_1^{15}x_2^{7}x_3^{7}x_5 + x_1^{15}x_2^{7}x_3^{7}x_4,\\
q_2 &= x_2^{3}x_3^{5}x_4^{7}x_5^{15} + x_2^{3}x_3^{5}x_4^{15}x_5^{7} + x_2^{3}x_3^{7}x_4^{5}x_5^{15} + x_2^{3}x_3^{7}x_4^{15}x_5^{5} + x_2^{3}x_3^{15}x_4^{5}x_5^{7}\\ 
&\quad + x_2^{3}x_3^{15}x_4^{7}x_5^{5} + x_2^{7}x_3^{3}x_4^{5}x_5^{15} + x_2^{7}x_3^{3}x_4^{15}x_5^{5} + x_2^{7}x_3^{15}x_4^{3}x_5^{5} + x_2^{15}x_3^{3}x_4^{5}x_5^{7}\\ 
&\quad + x_2^{15}x_3^{3}x_4^{7}x_5^{5} + x_2^{15}x_3^{7}x_4^{3}x_5^{5} + x_1^{3}x_3^{5}x_4^{7}x_5^{15} + x_1^{3}x_3^{5}x_4^{15}x_5^{7} + x_1^{3}x_3^{7}x_4^{5}x_5^{15}\\ 
&\quad + x_1^{3}x_3^{7}x_4^{15}x_5^{5} + x_1^{3}x_3^{15}x_4^{5}x_5^{7} + x_1^{3}x_3^{15}x_4^{7}x_5^{5} + x_1^{3}x_2^{5}x_4^{7}x_5^{15} + x_1^{3}x_2^{5}x_4^{15}x_5^{7}\\ 
&\quad + x_1^{3}x_2^{5}x_3^{7}x_5^{15} + x_1^{3}x_2^{5}x_3^{7}x_4^{15} + x_1^{3}x_2^{5}x_3^{15}x_5^{7} + x_1^{3}x_2^{5}x_3^{15}x_4^{7} + x_1^{3}x_2^{7}x_4^{5}x_5^{15}\\ 
&\quad + x_1^{3}x_2^{7}x_4^{15}x_5^{5} + x_1^{3}x_2^{7}x_3^{5}x_5^{15} + x_1^{3}x_2^{7}x_3^{5}x_4^{15} + x_1^{3}x_2^{7}x_3^{15}x_5^{5} + x_1^{3}x_2^{7}x_3^{15}x_4^{5}\\ 
&\quad + x_1^{3}x_2^{15}x_4^{5}x_5^{7} + x_1^{3}x_2^{15}x_4^{7}x_5^{5} + x_1^{3}x_2^{15}x_3^{5}x_5^{7} + x_1^{3}x_2^{15}x_3^{5}x_4^{7} + x_1^{3}x_2^{15}x_3^{7}x_5^{5}\\ 
&\quad + x_1^{3}x_2^{15}x_3^{7}x_4^{5} + x_1^{7}x_3^{3}x_4^{5}x_5^{15} + x_1^{7}x_3^{3}x_4^{15}x_5^{5} + x_1^{7}x_3^{15}x_4^{3}x_5^{5} + x_1^{7}x_2^{3}x_4^{5}x_5^{15}\\ 
&\quad + x_1^{7}x_2^{3}x_4^{15}x_5^{5} + x_1^{7}x_2^{3}x_3^{5}x_5^{15} + x_1^{7}x_2^{3}x_3^{5}x_4^{15} + x_1^{7}x_2^{3}x_3^{15}x_5^{5} + x_1^{7}x_2^{3}x_3^{15}x_4^{5}\\ 
&\quad + x_1^{7}x_2^{15}x_4^{3}x_5^{5} + x_1^{7}x_2^{15}x_3^{3}x_5^{5} + x_1^{7}x_2^{15}x_3^{3}x_4^{5} + x_1^{15}x_3^{3}x_4^{5}x_5^{7} + x_1^{15}x_3^{3}x_4^{7}x_5^{5}\\ 
&\quad + x_1^{15}x_3^{7}x_4^{3}x_5^{5} + x_1^{15}x_2^{3}x_4^{5}x_5^{7} + x_1^{15}x_2^{3}x_4^{7}x_5^{5} + x_1^{15}x_2^{3}x_3^{5}x_5^{7} + x_1^{15}x_2^{3}x_3^{5}x_4^{7}\\ 
&\quad + x_1^{15}x_2^{3}x_3^{7}x_5^{5} + x_1^{15}x_2^{3}x_3^{7}x_4^{5} + x_1^{15}x_2^{7}x_4^{3}x_5^{5} + x_1^{15}x_2^{7}x_3^{3}x_5^{5} + x_1^{15}x_2^{7}x_3^{3}x_4^{5},\\
q_3 &= x_1x_2x_3^{6}x_4^{7}x_5^{15} + x_1x_2x_3^{6}x_4^{15}x_5^{7} + x_1x_2x_3^{7}x_4^{6}x_5^{15} + x_1x_2x_3^{7}x_4^{15}x_5^{6}\\ 
&\quad + x_1x_2x_3^{15}x_4^{6}x_5^{7} + x_1x_2x_3^{15}x_4^{7}x_5^{6} + x_1x_2^{6}x_3x_4^{7}x_5^{15} + x_1x_2^{6}x_3x_4^{15}x_5^{7}\\ 
&\quad + x_1x_2^{6}x_3^{7}x_4x_5^{15} + x_1x_2^{6}x_3^{7}x_4^{15}x_5 + x_1x_2^{6}x_3^{15}x_4x_5^{7} + x_1x_2^{6}x_3^{15}x_4^{7}x_5\\ 
&\quad + x_1x_2^{7}x_3x_4^{6}x_5^{15} + x_1x_2^{7}x_3x_4^{15}x_5^{6} + x_1x_2^{7}x_3^{6}x_4x_5^{15} + x_1x_2^{7}x_3^{6}x_4^{15}x_5\\ 
&\quad + x_1x_2^{7}x_3^{15}x_4x_5^{6} + x_1x_2^{7}x_3^{15}x_4^{6}x_5 + x_1x_2^{15}x_3x_4^{6}x_5^{7} + x_1x_2^{15}x_3x_4^{7}x_5^{6}\\ 
&\quad + x_1x_2^{15}x_3^{6}x_4x_5^{7} + x_1x_2^{15}x_3^{6}x_4^{7}x_5 + x_1x_2^{15}x_3^{7}x_4x_5^{6} + x_1x_2^{15}x_3^{7}x_4^{6}x_5\\ 
&\quad + x_1^{7}x_2x_3x_4^{6}x_5^{15} + x_1^{7}x_2x_3x_4^{15}x_5^{6} + x_1^{7}x_2x_3^{6}x_4x_5^{15} + x_1^{7}x_2x_3^{6}x_4^{15}x_5\\ 
&\quad + x_1^{7}x_2x_3^{15}x_4x_5^{6} + x_1^{7}x_2x_3^{15}x_4^{6}x_5 + x_1^{7}x_2^{15}x_3x_4x_5^{6} + x_1^{7}x_2^{15}x_3x_4^{6}x_5\\ 
&\quad + x_1^{15}x_2x_3x_4^{6}x_5^{7} + x_1^{15}x_2x_3x_4^{7}x_5^{6} + x_1^{15}x_2x_3^{6}x_4x_5^{7} + x_1^{15}x_2x_3^{6}x_4^{7}x_5\\ 
&\quad + x_1^{15}x_2x_3^{7}x_4x_5^{6} + x_1^{15}x_2x_3^{7}x_4^{6}x_5 + x_1^{15}x_2^{7}x_3x_4x_5^{6} + x_1^{15}x_2^{7}x_3x_4^{6}x_5\\ 
&\quad + x_1^{3}x_2x_3^{4}x_4^{7}x_5^{15} + x_1^{3}x_2x_3^{4}x_4^{15}x_5^{7} + x_1^{3}x_2x_3^{7}x_4^{4}x_5^{15} + x_1^{3}x_2x_3^{7}x_4^{15}x_5^{4}\\ 
&\quad + x_1^{3}x_2x_3^{15}x_4^{4}x_5^{7} + x_1^{3}x_2x_3^{15}x_4^{7}x_5^{4} + x_1^{3}x_2^{4}x_3x_4^{7}x_5^{15} + x_1^{3}x_2^{4}x_3x_4^{15}x_5^{7}\\ 
&\quad + x_1^{3}x_2^{4}x_3^{7}x_4x_5^{15} + x_1^{3}x_2^{4}x_3^{7}x_4^{15}x_5 + x_1^{3}x_2^{4}x_3^{15}x_4x_5^{7} + x_1^{3}x_2^{4}x_3^{15}x_4^{7}x_5\\ 
&\quad + x_1^{3}x_2^{7}x_3x_4^{4}x_5^{15} + x_1^{3}x_2^{7}x_3x_4^{15}x_5^{4} + x_1^{3}x_2^{7}x_3^{4}x_4x_5^{15} + x_1^{3}x_2^{7}x_3^{4}x_4^{15}x_5\\ 
&\quad + x_1^{3}x_2^{7}x_3^{15}x_4x_5^{4} + x_1^{3}x_2^{7}x_3^{15}x_4^{4}x_5 + x_1^{3}x_2^{15}x_3x_4^{4}x_5^{7} + x_1^{3}x_2^{15}x_3x_4^{7}x_5^{4}\\ 
&\quad + x_1^{3}x_2^{15}x_3^{4}x_4x_5^{7} + x_1^{3}x_2^{15}x_3^{4}x_4^{7}x_5 + x_1^{3}x_2^{15}x_3^{7}x_4x_5^{4} + x_1^{3}x_2^{15}x_3^{7}x_4^{4}x_5\\ 
&\quad + x_1^{7}x_2^{3}x_3x_4^{4}x_5^{15} + x_1^{7}x_2^{3}x_3x_4^{15}x_5^{4} + x_1^{7}x_2^{3}x_3^{4}x_4x_5^{15} + x_1^{7}x_2^{3}x_3^{4}x_4^{15}x_5\\ 
&\quad + x_1^{7}x_2^{3}x_3^{15}x_4x_5^{4} + x_1^{7}x_2^{3}x_3^{15}x_4^{4}x_5 + x_1^{7}x_2^{15}x_3^{3}x_4x_5^{4} + x_1^{7}x_2^{15}x_3^{3}x_4^{4}x_5\\ 
&\quad + x_1^{15}x_2^{3}x_3x_4^{4}x_5^{7} + x_1^{15}x_2^{3}x_3x_4^{7}x_5^{4} + x_1^{15}x_2^{3}x_3^{4}x_4x_5^{7} + x_1^{15}x_2^{3}x_3^{4}x_4^{7}x_5\\ 
&\quad + x_1^{15}x_2^{3}x_3^{7}x_4x_5^{4} + x_1^{15}x_2^{3}x_3^{7}x_4^{4}x_5 + x_1^{15}x_2^{7}x_3^{3}x_4x_5^{4} + x_1^{15}x_2^{7}x_3^{3}x_4^{4}x_5,\\
q_4 &= x_1x_2^{3}x_3^{5}x_4^{6}x_5^{15} + x_1x_2^{3}x_3^{5}x_4^{15}x_5^{6} + x_1x_2^{3}x_3^{6}x_4^{5}x_5^{15} + x_1x_2^{3}x_3^{6}x_4^{15}x_5^{5}\\ 
&\quad + x_1x_2^{3}x_3^{15}x_4^{5}x_5^{6} + x_1x_2^{3}x_3^{15}x_4^{6}x_5^{5} + x_1x_2^{6}x_3^{3}x_4^{5}x_5^{15} + x_1x_2^{6}x_3^{3}x_4^{15}x_5^{5}\\ 
&\quad + x_1x_2^{6}x_3^{15}x_4^{3}x_5^{5} + x_1x_2^{15}x_3^{3}x_4^{5}x_5^{6} + x_1x_2^{15}x_3^{3}x_4^{6}x_5^{5} + x_1x_2^{15}x_3^{6}x_4^{3}x_5^{5}\\ 
&\quad + x_1^{3}x_2x_3^{5}x_4^{6}x_5^{15} + x_1^{3}x_2x_3^{5}x_4^{15}x_5^{6} + x_1^{3}x_2x_3^{15}x_4^{5}x_5^{6} + x_1^{3}x_2^{5}x_3x_4^{6}x_5^{15}\\ 
&\quad + x_1^{3}x_2^{5}x_3x_4^{15}x_5^{6} + x_1^{3}x_2^{5}x_3^{6}x_4x_5^{15} + x_1^{3}x_2^{5}x_3^{6}x_4^{15}x_5 + x_1^{3}x_2^{5}x_3^{15}x_4x_5^{6}\\ 
&\quad + x_1^{3}x_2^{5}x_3^{15}x_4^{6}x_5 + x_1^{3}x_2^{15}x_3x_4^{5}x_5^{6} + x_1^{3}x_2^{15}x_3^{5}x_4x_5^{6} + x_1^{3}x_2^{15}x_3^{5}x_4^{6}x_5\\ 
&\quad + x_1^{15}x_2x_3^{3}x_4^{5}x_5^{6} + x_1^{15}x_2x_3^{3}x_4^{6}x_5^{5} + x_1^{15}x_2x_3^{6}x_4^{3}x_5^{5} + x_1^{15}x_2^{3}x_3x_4^{5}x_5^{6}\\ 
&\quad + x_1^{15}x_2^{3}x_3^{5}x_4x_5^{6} + x_1^{15}x_2^{3}x_3^{5}x_4^{6}x_5 + x_1^{3}x_2^{3}x_3^{4}x_4^{5}x_5^{15} + x_1^{3}x_2^{3}x_3^{4}x_4^{15}x_5^{5}\\ 
&\quad + x_1^{3}x_2^{3}x_3^{5}x_4^{4}x_5^{15} + x_1^{3}x_2^{3}x_3^{5}x_4^{15}x_5^{4} + x_1^{3}x_2^{3}x_3^{15}x_4^{4}x_5^{5} + x_1^{3}x_2^{3}x_3^{15}x_4^{5}x_5^{4}\\ 
&\quad + x_1^{3}x_2^{4}x_3^{3}x_4^{5}x_5^{15} + x_1^{3}x_2^{4}x_3^{3}x_4^{15}x_5^{5} + x_1^{3}x_2^{4}x_3^{15}x_4^{3}x_5^{5} + x_1^{3}x_2^{15}x_3^{3}x_4^{4}x_5^{5}\\ 
&\quad + x_1^{3}x_2^{15}x_3^{3}x_4^{5}x_5^{4} + x_1^{3}x_2^{15}x_3^{4}x_4^{3}x_5^{5} + x_1^{15}x_2^{3}x_3^{3}x_4^{4}x_5^{5} + x_1^{15}x_2^{3}x_3^{3}x_4^{5}x_5^{4}\\ 
&\quad + x_1^{15}x_2^{3}x_3^{4}x_4^{3}x_5^{5},\\
q_5 &= x_1x_2^{3}x_3^{5}x_4^{7}x_5^{14} + x_1x_2^{3}x_3^{5}x_4^{14}x_5^{7} + x_1x_2^{3}x_3^{6}x_4^{7}x_5^{13} + x_1x_2^{3}x_3^{6}x_4^{13}x_5^{7}\\ 
&\quad + x_1x_2^{6}x_3^{3}x_4^{7}x_5^{13} + x_1x_2^{6}x_3^{3}x_4^{13}x_5^{7} + x_1x_2^{6}x_3^{7}x_4^{7}x_5^{9} + x_1x_2^{6}x_3^{7}x_4^{9}x_5^{7}\\ 
&\quad + x_1x_2^{7}x_3^{6}x_4^{7}x_5^{9} + x_1x_2^{7}x_3^{6}x_4^{9}x_5^{7} + x_1x_2^{7}x_3^{7}x_4^{7}x_5^{8} + x_1x_2^{7}x_3^{7}x_4^{8}x_5^{7}\\ 
&\quad + x_1^{3}x_2x_3^{5}x_4^{7}x_5^{14} + x_1^{3}x_2x_3^{5}x_4^{14}x_5^{7} + x_1^{3}x_2x_3^{7}x_4^{7}x_5^{12} + x_1^{3}x_2x_3^{7}x_4^{12}x_5^{7}\\ 
&\quad + x_1^{3}x_2^{3}x_3^{4}x_4^{7}x_5^{13} + x_1^{3}x_2^{3}x_3^{4}x_4^{13}x_5^{7} + x_1^{3}x_2^{3}x_3^{5}x_4^{7}x_5^{12} + x_1^{3}x_2^{3}x_3^{5}x_4^{12}x_5^{7}\\ 
&\quad + x_1^{3}x_2^{3}x_3^{7}x_4^{4}x_5^{13} + x_1^{3}x_2^{3}x_3^{7}x_4^{13}x_5^{4} + x_1^{3}x_2^{3}x_3^{13}x_4^{4}x_5^{7} + x_1^{3}x_2^{3}x_3^{13}x_4^{7}x_5^{4}\\ 
&\quad + x_1^{3}x_2^{4}x_3^{3}x_4^{7}x_5^{13} + x_1^{3}x_2^{4}x_3^{3}x_4^{13}x_5^{7} + x_1^{3}x_2^{4}x_3^{7}x_4^{7}x_5^{9} + x_1^{3}x_2^{4}x_3^{7}x_4^{9}x_5^{7}\\ 
&\quad + x_1^{3}x_2^{5}x_3x_4^{7}x_5^{14} + x_1^{3}x_2^{5}x_3x_4^{14}x_5^{7} + x_1^{3}x_2^{5}x_3^{6}x_4^{7}x_5^{9} + x_1^{3}x_2^{5}x_3^{6}x_4^{9}x_5^{7}\\ 
&\quad + x_1^{3}x_2^{5}x_3^{7}x_4x_5^{14} + x_1^{3}x_2^{5}x_3^{7}x_4^{7}x_5^{8} + x_1^{3}x_2^{5}x_3^{7}x_4^{8}x_5^{7} + x_1^{3}x_2^{5}x_3^{7}x_4^{14}x_5\\ 
&\quad + x_1^{3}x_2^{5}x_3^{14}x_4x_5^{7} + x_1^{3}x_2^{5}x_3^{14}x_4^{7}x_5 + x_1^{3}x_2^{7}x_3x_4^{7}x_5^{12} + x_1^{3}x_2^{7}x_3x_4^{12}x_5^{7}\\ 
&\quad + x_1^{3}x_2^{7}x_3^{4}x_4^{7}x_5^{9} + x_1^{3}x_2^{7}x_3^{4}x_4^{9}x_5^{7} + x_1^{3}x_2^{7}x_3^{7}x_4x_5^{12} + x_1^{3}x_2^{7}x_3^{7}x_4^{4}x_5^{9}\\ 
&\quad + x_1^{3}x_2^{7}x_3^{7}x_4^{9}x_5^{4} + x_1^{3}x_2^{7}x_3^{7}x_4^{12}x_5 + x_1^{3}x_2^{7}x_3^{9}x_4^{4}x_5^{7} + x_1^{3}x_2^{7}x_3^{9}x_4^{7}x_5^{4}\\ 
&\quad + x_1^{3}x_2^{7}x_3^{12}x_4x_5^{7} + x_1^{3}x_2^{7}x_3^{12}x_4^{7}x_5 + x_1^{7}x_2x_3^{6}x_4^{7}x_5^{9} + x_1^{7}x_2x_3^{6}x_4^{9}x_5^{7}\\ 
&\quad + x_1^{7}x_2x_3^{7}x_4^{7}x_5^{8} + x_1^{7}x_2x_3^{7}x_4^{8}x_5^{7} + x_1^{7}x_2^{3}x_3x_4^{7}x_5^{12} + x_1^{7}x_2^{3}x_3x_4^{12}x_5^{7}\\ 
&\quad + x_1^{7}x_2^{3}x_3^{4}x_4^{7}x_5^{9} + x_1^{7}x_2^{3}x_3^{4}x_4^{9}x_5^{7} + x_1^{7}x_2^{3}x_3^{7}x_4x_5^{12} + x_1^{7}x_2^{3}x_3^{7}x_4^{4}x_5^{9}\\ 
&\quad + x_1^{7}x_2^{3}x_3^{7}x_4^{9}x_5^{4} + x_1^{7}x_2^{3}x_3^{7}x_4^{12}x_5 + x_1^{7}x_2^{3}x_3^{9}x_4^{4}x_5^{7} + x_1^{7}x_2^{3}x_3^{9}x_4^{7}x_5^{4}\\ 
&\quad + x_1^{7}x_2^{3}x_3^{12}x_4x_5^{7} + x_1^{7}x_2^{3}x_3^{12}x_4^{7}x_5 + x_1^{7}x_2^{7}x_3x_4^{7}x_5^{8} + x_1^{7}x_2^{7}x_3x_4^{8}x_5^{7}\\ 
&\quad + x_1^{7}x_2^{7}x_3^{7}x_4x_5^{8} + x_1^{7}x_2^{7}x_3^{7}x_4^{8}x_5 + x_1^{7}x_2^{7}x_3^{8}x_4x_5^{7} + x_1^{7}x_2^{7}x_3^{8}x_4^{7}x_5,\\
q_6 & = x_1x_2^{3}x_3^{6}x_4^{7}x_5^{13} + x_1x_2^{3}x_3^{6}x_4^{13}x_5^{7} + x_1x_2^{3}x_3^{7}x_4^{7}x_5^{12} + x_1x_2^{3}x_3^{7}x_4^{12}x_5^{7}\\ 
&\quad + x_1x_2^{6}x_3^{3}x_4^{7}x_5^{13} + x_1x_2^{6}x_3^{3}x_4^{13}x_5^{7} + x_1x_2^{6}x_3^{7}x_4^{3}x_5^{13} + x_1x_2^{6}x_3^{7}x_4^{11}x_5^{5}\\ 
&\quad + x_1x_2^{6}x_3^{11}x_4^{5}x_5^{7} + x_1x_2^{6}x_3^{11}x_4^{7}x_5^{5} + x_1x_2^{7}x_3^{3}x_4^{7}x_5^{12} + x_1x_2^{7}x_3^{3}x_4^{12}x_5^{7}\\ 
&\quad + x_1x_2^{7}x_3^{6}x_4^{7}x_5^{9} + x_1x_2^{7}x_3^{6}x_4^{9}x_5^{7} + x_1x_2^{7}x_3^{7}x_4^{3}x_5^{12} + x_1x_2^{7}x_3^{7}x_4^{7}x_5^{8}\\ 
&\quad + x_1x_2^{7}x_3^{7}x_4^{8}x_5^{7} + x_1x_2^{7}x_3^{7}x_4^{10}x_5^{5} + x_1x_2^{7}x_3^{10}x_4^{5}x_5^{7} + x_1x_2^{7}x_3^{10}x_4^{7}x_5^{5}\\ 
&\quad + x_1^{3}x_2x_3^{7}x_4^{6}x_5^{13} + x_1^{3}x_2x_3^{7}x_4^{7}x_5^{12} + x_1^{3}x_2x_3^{7}x_4^{12}x_5^{7} + x_1^{3}x_2x_3^{7}x_4^{14}x_5^{5}\\ 
&\quad + x_1^{3}x_2x_3^{14}x_4^{5}x_5^{7} + x_1^{3}x_2x_3^{14}x_4^{7}x_5^{5} + x_1^{3}x_2^{3}x_3^{4}x_4^{7}x_5^{13} + x_1^{3}x_2^{3}x_3^{4}x_4^{13}x_5^{7}\\ 
&\quad + x_1^{3}x_2^{3}x_3^{7}x_4^{4}x_5^{13} + x_1^{3}x_2^{3}x_3^{7}x_4^{13}x_5^{4} + x_1^{3}x_2^{3}x_3^{13}x_4^{4}x_5^{7} + x_1^{3}x_2^{3}x_3^{13}x_4^{7}x_5^{4}\\ 
&\quad + x_1^{3}x_2^{4}x_3^{3}x_4^{7}x_5^{13} + x_1^{3}x_2^{4}x_3^{3}x_4^{13}x_5^{7} + x_1^{3}x_2^{4}x_3^{7}x_4^{3}x_5^{13} + x_1^{3}x_2^{4}x_3^{7}x_4^{11}x_5^{5}\\ 
&\quad + x_1^{3}x_2^{4}x_3^{11}x_4^{5}x_5^{7} + x_1^{3}x_2^{4}x_3^{11}x_4^{7}x_5^{5} + x_1^{3}x_2^{7}x_3x_4^{5}x_5^{14} + x_1^{3}x_2^{7}x_3x_4^{6}x_5^{13}\\ 
&\quad + x_1^{3}x_2^{7}x_3x_4^{13}x_5^{6} + x_1^{3}x_2^{7}x_3x_4^{14}x_5^{5} + x_1^{3}x_2^{7}x_3^{3}x_4^{4}x_5^{13} + x_1^{3}x_2^{7}x_3^{3}x_4^{13}x_5^{4}\\ 
&\quad + x_1^{3}x_2^{7}x_3^{4}x_4^{7}x_5^{9} + x_1^{3}x_2^{7}x_3^{4}x_4^{9}x_5^{7} + x_1^{3}x_2^{7}x_3^{7}x_4^{4}x_5^{9} + x_1^{3}x_2^{7}x_3^{7}x_4^{8}x_5^{5}\\ 
&\quad + x_1^{3}x_2^{7}x_3^{8}x_4^{5}x_5^{7} + x_1^{3}x_2^{7}x_3^{8}x_4^{7}x_5^{5} + x_1^{3}x_2^{7}x_3^{13}x_4^{2}x_5^{5} + x_1^{3}x_2^{7}x_3^{13}x_4^{3}x_5^{4}\\ 
&\quad + x_1^{3}x_2^{13}x_3^{2}x_4^{5}x_5^{7} + x_1^{3}x_2^{13}x_3^{2}x_4^{7}x_5^{5} + x_1^{3}x_2^{13}x_3^{3}x_4^{4}x_5^{7} + x_1^{3}x_2^{13}x_3^{3}x_4^{7}x_5^{4}\\ 
&\quad + x_1^{3}x_2^{13}x_3^{7}x_4^{2}x_5^{5} + x_1^{3}x_2^{13}x_3^{7}x_4^{3}x_5^{4} + x_1^{7}x_2x_3^{3}x_4^{7}x_5^{12} + x_1^{7}x_2x_3^{3}x_4^{12}x_5^{7}\\ 
&\quad + x_1^{7}x_2x_3^{6}x_4^{7}x_5^{9} + x_1^{7}x_2x_3^{6}x_4^{9}x_5^{7} + x_1^{7}x_2x_3^{7}x_4^{3}x_5^{12} + x_1^{7}x_2x_3^{7}x_4^{7}x_5^{8}\\ 
&\quad + x_1^{7}x_2x_3^{7}x_4^{8}x_5^{7} + x_1^{7}x_2x_3^{7}x_4^{10}x_5^{5} + x_1^{7}x_2x_3^{10}x_4^{5}x_5^{7} + x_1^{7}x_2x_3^{10}x_4^{7}x_5^{5}\\ 
&\quad + x_1^{7}x_2^{3}x_3^{4}x_4^{7}x_5^{9} + x_1^{7}x_2^{3}x_3^{4}x_4^{9}x_5^{7} + x_1^{7}x_2^{3}x_3^{7}x_4^{4}x_5^{9} + x_1^{7}x_2^{3}x_3^{7}x_4^{9}x_5^{4}\\ 
&\quad + x_1^{7}x_2^{3}x_3^{9}x_4^{4}x_5^{7} + x_1^{7}x_2^{3}x_3^{9}x_4^{7}x_5^{4} + x_1^{7}x_2^{7}x_3x_4^{3}x_5^{12} + x_1^{7}x_2^{7}x_3x_4^{6}x_5^{9}\\ 
&\quad + x_1^{7}x_2^{7}x_3x_4^{9}x_5^{6} + x_1^{7}x_2^{7}x_3x_4^{10}x_5^{5} + x_1^{7}x_2^{7}x_3^{3}x_4^{4}x_5^{9} + x_1^{7}x_2^{7}x_3^{3}x_4^{9}x_5^{4}\\ 
&\quad + x_1^{7}x_2^{7}x_3^{9}x_4^{2}x_5^{5} + x_1^{7}x_2^{7}x_3^{9}x_4^{3}x_5^{4} + x_1^{7}x_2^{9}x_3^{2}x_4^{5}x_5^{7} + x_1^{7}x_2^{9}x_3^{2}x_4^{7}x_5^{5}\\ 
&\quad + x_1^{7}x_2^{9}x_3^{3}x_4^{4}x_5^{7} + x_1^{7}x_2^{9}x_3^{3}x_4^{7}x_5^{4} + x_1^{7}x_2^{9}x_3^{7}x_4^{2}x_5^{5} + x_1^{7}x_2^{9}x_3^{7}x_4^{3}x_5^{4},\\
q_7 &= x_1^{3}x_2^{3}x_3^{5}x_4^{6}x_5^{13} + x_1^{3}x_2^{3}x_3^{5}x_4^{14}x_5^{5} + x_1^{3}x_2^{3}x_3^{13}x_4^{6}x_5^{5} + x_1^{3}x_2^{5}x_3^{3}x_4^{5}x_5^{14}\\ 
&\quad + x_1^{3}x_2^{5}x_3^{3}x_4^{6}x_5^{13} + x_1^{3}x_2^{5}x_3^{3}x_4^{13}x_5^{6} + x_1^{3}x_2^{5}x_3^{3}x_4^{14}x_5^{5} + x_1^{3}x_2^{5}x_3^{6}x_4^{3}x_5^{13}\\ 
&\quad + x_1^{3}x_2^{5}x_3^{6}x_4^{11}x_5^{5} + x_1^{3}x_2^{5}x_3^{11}x_4^{5}x_5^{6} + x_1^{3}x_2^{5}x_3^{11}x_4^{6}x_5^{5} + x_1^{3}x_2^{5}x_3^{14}x_4^{3}x_5^{5}\\ 
&\quad + x_1^{3}x_2^{7}x_3^{3}x_4^{5}x_5^{12} + x_1^{3}x_2^{7}x_3^{3}x_4^{12}x_5^{5} + x_1^{3}x_2^{7}x_3^{5}x_4^{6}x_5^{9} + x_1^{3}x_2^{7}x_3^{5}x_4^{9}x_5^{6}\\ 
&\quad + x_1^{3}x_2^{7}x_3^{9}x_4^{5}x_5^{6} + x_1^{3}x_2^{7}x_3^{12}x_4^{3}x_5^{5} + x_1^{7}x_2^{3}x_3^{3}x_4^{5}x_5^{12} + x_1^{7}x_2^{3}x_3^{3}x_4^{12}x_5^{5}\\ 
&\quad + x_1^{7}x_2^{3}x_3^{5}x_4^{6}x_5^{9} + x_1^{7}x_2^{3}x_3^{5}x_4^{9}x_5^{6} + x_1^{7}x_2^{3}x_3^{9}x_4^{5}x_5^{6} + x_1^{7}x_2^{3}x_3^{12}x_4^{3}x_5^{5}\\ 
&\quad + x_1^{7}x_2^{7}x_3^{3}x_4^{5}x_5^{8} + x_1^{7}x_2^{7}x_3^{3}x_4^{8}x_5^{5} + x_1^{7}x_2^{7}x_3^{8}x_4^{3}x_5^{5}.
\end{align*}
	
\medskip

\end{document}